\newtheorem{theorem}{Theorem}[section]
\newtheorem{conjecture}[theorem]{Conjecture}
\newtheorem{lemma}[theorem]{Lemma}
\newtheorem{proposition}[theorem]{Proposition}
\newtheorem{corollary}[theorem]{Corollary}
\newtheorem{problem}[theorem]{Problem}
\theoremstyle{definition}
\newtheorem{definition}[theorem]{Definition}
\newtheorem*{remark}{Remark}
\newtheorem{example}[theorem]{Example}
\renewcommand{\Bbb}{\mathbb}
\begin{document}

\title{Algebraic Montgomery-Yang Problem and Smooth Obstructions}

\author{Woohyeok Jo}
\address{Department of Mathematical Sciences, Seoul National University, Seoul 08826,
 Republic of Korea}
\email{koko1681@snu.ac.kr}

\author{Jongil Park}
\address{Department of Mathematical Sciences and Research Institute of Mathematics, Seoul National University, Seoul 08826, Republic of Korea}
\email{jipark@snu.ac.kr}

\author{Kyungbae Park}
\address{Department of Mathematics, Kangwon National University, Kangwon 24341,
 Republic of Korea}
\email{kyungbaepark@kangwon.ac.kr}

\begin{abstract}
    Let $S$ be a rational homology complex projective plane with quotient singularities. The algebraic Montgomery-Yang problem conjectures that the number of singular points of $S$ is at most three if its smooth locus is simply-connected. In this paper, we leverage results from the study of smooth 4-manifolds, including the Donaldson diagonalization theorem and Heegaard Floer correction terms, to establish additional conditions for $S$. As a result, we eliminate the possibility of a rational homology complex projective plane of specific types with four singularities. Moreover, we identify large families encompassing infinitely many types of singularities that satisfy the orbifold BMY inequality, a key property in algebraic geometry, yet are obstructed from being a rational homology complex projective plane due to smooth conditions. Additionally, we discuss computational results related to this problem, offering new insights into the algebraic Montgomery-Yang problem.
\end{abstract}
\maketitle

\section{Introduction}

A normal projective complex surface $S$ is called a \emph{rational homology $\mathbb{CP}^2$} if the homology groups of $S$ are the same as those of $\mathbb{CP}^2$ with rational coefficients, i.e., $H_*(S;\mathbb{Q})\cong H_*(\mathbb{CP}^2;\mathbb{Q})$. For instance, for relatively prime positive integers $p_1, p_2, p_3$, the weighted projective plane $\mathbb{CP}(p_1,p_2,p_3)$ is a rational homology $\mathbb{CP}^2$ with three singularities. Concerning the number of singularities of a rational homology $\mathbb{CP}^2$, Koll\'ar conjectured the following. 
    \begin{conjecture}[Algebraic Montgomery-Yang Problem, {\cite[Conjecture 30]{Kollar-2008}}]\label{conj:AMY}
        Let $S$ be a rational homology $\mathbb{CP}^2$ with quotient singularities. If the smooth locus $S^0:= S \setminus \textup{Sing}(S)$ of $S$ is simply-connected, then $S$ has at most 3 singular points.
    \end{conjecture}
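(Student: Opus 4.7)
The plan is to attack the conjecture by fusing the algebro-geometric structure of $S$ with smooth 4-manifold obstructions. First I would translate the hypothesis $\pi_1(S^0)=1$ into a purely topological statement. Since each quotient singularity has link $S^3/\Gamma_i$ for a finite $\Gamma_i\subset U(2)$, simple connectedness of $S^0$ forces the natural map from the images of the $\Gamma_i$ to generate $\pi_1(S^0)$, while the rational-homology-$\mathbb{CP}^2$ condition fixes $|H_1(S^0;\mathbb{Z})|$ in terms of the orders $n_i=|\Gamma_i|$. Standard Mayer--Vietoris then forces strong arithmetic relations among the $n_i$ and pins down the determinants of the resolution lattices. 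I would use these relations to reduce the problem, for each number $k$ of singular points, to a finite (but large) combinatorial list of admissible tuples of resolution graphs.

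Second, I would pass to the minimal resolution $\widetilde S$, a closed smooth 4-manifold whose intersection form is $\langle 1\rangle\oplus Q$, with $Q$ negative definite and combinatorially determined by the resolution strings. Donaldson's diagonalization theorem then forces $Q$ to embed into $-n\langle 1\rangle$ with $n=b_2(\widetilde S)-1$, which severely restricts the admissible combinations of strings. Independently, removing small neighborhoods of the singular points leaves a smooth 4-manifold $W$ with $\partial W=\bigsqcup L_i$ a disjoint union of spherical space forms (lens spaces in the cyclic case); the Ozsv\'ath--Szab\'o correction terms $d(L_i,\mathfrak s)$ must satisfy the rational-homology-cobordism inequalities compelled by $W$'s rational intersection form. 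These two families of obstructions, combined with the orbifold BMY inequality already present in the algebraic geometry literature, form the engine of the argument.

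Third, I would run a case analysis on $k\geq 4$. For each admissible tuple surviving the arithmetic reduction of step one, I would test whether the Donaldson embedding, the $d$-invariant inequalities, and BMY can hold simultaneously. The paper's results suggest this is feasible for specific four-singularity families: one exhibits an explicit obstruction by showing that every candidate diagonal embedding of $Q$ forces a correction term configuration violating the Ozsv\'ath--Szab\'o bounds, or vice versa.

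The hard part, and the reason the conjecture remains open, will be controlling the interplay between these obstructions in full generality. Donaldson leaves a finite but potentially huge list of admissible embeddings; the $d$-invariants cut this down but not always to zero; BMY removes asymptotically thin strings but not intermediate ones. Thus for each single obstruction infinite families of candidates persist, and a complete proof would require a uniform mechanism — perhaps growth control of $d$-invariants along resolution strings, or a global rigidity statement for the embedding — that rules out all $k\geq 4$ configurations at once. I therefore expect my proposal to succeed only as a partial result in the spirit of the present paper, obstructing concrete four-singularity families, rather than as a full proof of Conjecture~\ref{conj:AMY}.
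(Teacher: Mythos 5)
This statement is a \emph{conjecture}; the paper does not prove it and neither do you, as you yourself acknowledge at the end of your proposal. What the paper actually establishes (Theorems~\ref{thm:main-1}, \ref{thm:experimental}, \ref{thm:infinite}) is partial progress: it rules out certain four-singularity configurations, not all of them. With that framing, your outline is essentially the paper's strategy, and your self-assessment is accurate.

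A few points where your sketch diverges from, or is less precise than, what the paper actually uses. First, the hypothesis is $\pi_1(S^0)=1$, which already gives $H_1(S^0;\mathbb{Z})=0$; the consequences actually invoked (via \cite[Lemma 3]{Hwang-Keum-2011-1}) are that the orders $p_i$ are pairwise coprime and that either $K_S$ or $-K_S$ is ample, which together with prior results of Hwang--Keum reduces the problem to cyclic singularities with $K_S$ ample and at most four singular points. Second, and more substantively, your claim that the arithmetic reductions ``reduce the problem, for each number $k$ of singular points, to a finite (but large) combinatorial list'' is false for $k=4$: after the orbifold BMY inequality one is left with the three cases $(2,3,5,n)$, $(2,3,7,n)$ with $n\le 41$, and $(2,3,11,13)$, and the first of these is genuinely infinite in $n$ and in the choices of $q_i$. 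This is precisely the obstacle the paper runs into. The paper dispatches Cases~(2) and~(3) by the combination of linking-form, Donaldson, Kervaire--Milnor, and spin $d$-invariant obstructions, but for Case~(1) it can only verify the obstruction computationally up to $p_4<50000$ (with a handful of survivors), exhibit infinite sub-families killed by Donaldson (Theorem~\ref{thm:infinite}), and, conversely, exhibit an infinite family in Proposition~\ref{prop:mysterious} that evades every obstruction in the paper, including the ones you list. So the ``uniform mechanism'' you correctly say would be needed is not only missing from your proposal but is explicitly shown by the paper to fail for the named obstructions. In short: your approach is the paper's approach, your honest caveat is warranted, and the one concrete overclaim is the finiteness of the candidate list for $k=4$.
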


While Conjecture \ref{conj:AMY} has not been completely resolved, numerous partial results have been achieved, mostly due to the work of Hwang and Keum. Let $S$ be a rational homology $\mathbb{CP}^2$ with $\pi_1(S^0)=1$. Then:
\begin{itemize}
    \item $|\textup{Sing}(S)|\leq 4$ (see \cite{Hwang-Keum-2011-2}, for example).
    \item If $S$ has at least one non-cyclic singularity, then $|\textup{Sing}(S)|\leq 3$ (\cite[Theorem 2]{Hwang-Keum-2011-1}).
    \item If $S$ is not rational or $-K_S$ is ample, then $|\textup{Sing}(S)| \leq 3$ (\cite[Theorem 1.2]{Hwang-Keum-2013}, \cite[Theorem 1.2]{Hwang-Keum-2014}).
\end{itemize}

For convenience of notation, we will abbreviate the term \textit{rational homology $\mathbb{CP}^2$ with simply-connected smooth locus} as \textit{simply-connected rational homology $\mathbb{CP}^2$} throughout this paper. However, it should be noted that even if $S$ is simply-connected, its smooth locus $S^0$ may not be. We note that it is conjectured that any simply-connected rational homology $\mathbb{CP}^2$ with quotient singularities is rational \cite[Conjecture 29]{Kollar-2008}. Since $H_1(S^0;\mathbb{Z})=0$ implies that either $K_S$ or $-K_S$ is ample \cite[Lemma 3]{Hwang-Keum-2011-1}, the aforementioned results reduce Conjecture \ref{conj:AMY} to the following. 

\begin{conjecture}\label{conj:AMY2} Let $S$ be a rational homology $\mathbb{CP}^2$ with at most $4$ cyclic singularities. Suppose that the smooth locus $S^0$ of $S$ is simply-connected. If $S$ is rational and $K_S$ is ample, then $|\textup{Sing}(S)| \leq 3$.  
\end{conjecture}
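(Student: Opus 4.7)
The plan is to argue by contradiction: suppose $S$ has exactly four cyclic quotient singularities $\frac{1}{n_i}(1,q_i)$ for $i=1,2,3,4$. First I would harvest all the algebraic constraints on the tuple $\{(n_i,q_i)\}$. Being a rational homology $\mathbb{CP}^2$ forces $\prod n_i$ to equal the order of the discriminant group of an appropriate intersection form, while $\pi_1(S^0)=1$ translates, via the long exact sequence for the pair $(S,\mathrm{Sing}(S))$ and Armstrong's theorem, into an explicit surjectivity condition involving the local fundamental groups $\mathbb{Z}/n_i$. Ampleness of $K_S$ combined with rationality of $S$ and the orbifold BMY inequality then bounds $K_S^2 \leq 3\,e_{\mathrm{orb}}(S)$ in terms of the $n_i$, pruning the list of admissible singularity types to a structured (though a priori infinite) family.

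Next I would convert each remaining algebraic candidate into a smooth 4-manifold problem. Let $\pi\colon \widetilde S \to S$ be the minimal resolution. Then $\widetilde S$ is a smooth, closed, simply-connected 4-manifold whose intersection form is the orthogonal sum of the negative-definite plumbing lattices $\Gamma_i$ associated to the Hirzebruch-Jung continued fractions of $n_i/q_i$, together with a single positive class generating the rational $H_2(S;\mathbb{Q})$. Equivalently, the complement $W := S \setminus \nu(\mathrm{Sing}\,S)$ is a smooth filling of $\bigsqcup_i L(n_i,q_i)$ with $b_2^+(W)=1$ and $b_2^-(W)=0$, whose intersection form has prescribed discriminant.

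I would then apply two smooth obstructions to this filling. First, Donaldson's diagonalization theorem forces the negative-definite lattice $\bigoplus_i \Gamma_i$ (inside $\widetilde S$, after a possible blow-up to absorb the positive class) to embed into the standard diagonal form $\langle -1\rangle^N$; this is a rigid combinatorial condition on the lengths and framings of the plumbing strings, equivalent to restrictive arithmetic conditions on the continued fraction expansions of $q_i/n_i$. Second, Heegaard Floer correction terms of the boundary lens spaces must satisfy the Ozsváth-Szabó inequality $d(L(n_i,q_i),\mathfrak{s}) \geq -c(\mathfrak{s})^2/4 - (\text{rank})/4$ for every spin$^c$-structure extending over $W$, giving additional congruence-type obstructions. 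Each candidate tuple $\{(n_i,q_i)\}$ surviving the algebraic stage is tested against both obstructions.

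The main obstacle will be that the algebraic constraints leave infinitely many families of four-tuples, and neither Donaldson nor the $d$-invariants alone suffice; some tuples pass one and are killed only by the other. I expect the hard work to lie in organizing the surviving candidates into finitely many infinite families, perhaps parametrized by Wahl chains or by the length of the shortest resolution string, so that the combined obstruction can be applied uniformly to each family rather than one singularity type at a time. As foreshadowed by the abstract, I would expect this plan to eliminate large families and specific four-singular types, thus providing strong evidence for Conjecture \ref{conj:AMY2}, while a complete resolution would likely require ruling out a residual list of stubborn candidates by singularity-specific arguments or a refined invariant beyond the Donaldson/$d$-invariant pair.
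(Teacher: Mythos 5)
This statement is a \emph{conjecture}, not a theorem: the paper does not prove Conjecture~\ref{conj:AMY2}, and in fact explicitly constructs an infinite family of singularity types in Section~\ref{subsec:mysterious_infinite_family} (Proposition~\ref{prop:mysterious}) that evade every obstruction the paper introduces, so the conjecture remains open. Your ``proof proposal'' is accordingly not a proof but a research program, and you correctly acknowledge as much in your final paragraph. That said, the program you sketch matches the paper's actual strategy very closely: use the orbifold BMY inequality, perfect squareness of $D$, and the linking-form condition to prune the admissible tuples $\{(p_i,q_i)\}$; pass to a smooth filling $M$ of $\bigsqcup_i L(p_i,q_i)$ with $b_2=b_2^+=1$; cap it off with the negative-definite plumbings $X(p_i,q_i)$ to obtain a closed negative-definite 4-manifold and apply Donaldson diagonalization as a lattice-embedding obstruction; and supplement this with Heegaard Floer $d$-invariants.

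Two technical points diverge slightly from the paper. First, you propose to work directly with the minimal resolution $\widetilde S$ ``after a possible blow-up to absorb the positive class,'' whereas the paper instead glues $\natural_i X(p_i,q_i)$ to $-M$ (the orientation-reversed complement) so that the resulting closed manifold is already negative definite; there is nothing to blow down or absorb, and this is what makes Donaldson's theorem immediately applicable. Second, for the Floer-theoretic obstruction the paper does not invoke the general Ozsv\'ath-Szab\'o correction-term inequality you cite, but rather the sharper spin $d$-invariant \emph{equality} $d(Y,\mathfrak s|_Y)=-\tfrac14$ for a spin filling with $b_2^+=1$, $b_2^-=0$ (Proposition~\ref{prop:spin_cobordism} and Corollary~\ref{cor:2.10}); this equality-type constraint is considerably more restrictive and is what the paper actually uses to kill candidates that pass Donaldson. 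Finally, a substantive gap in your plan, which the paper's own Proposition~\ref{prop:mysterious} makes precise, is the assumption that the surviving candidates can be organized into finitely many families each of which is eventually killed; the paper exhibits an explicit infinite family (parametrized by a Pell-type recursion) satisfying \emph{all} of the algebraic, topological, and smooth conditions, so the combined Donaldson/$d$-invariant toolkit provably cannot close the case $(2,3,5,n)$, and a genuinely new obstruction would be needed.
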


For examples of rational homology $\mathbb{CP}^2$'s that are rational and possess ample canonical divisors, refer to \cite{Hwang-Keum-2012}. From now on, we assume that a rational homology $\mathbb{CP}^2$ is a normal projective complex surface unless otherwise stated. Let $S$ be a simply-connected rational homology $\mathbb{CP}^2$ with four cyclic singularities.
 The \emph{type of singularities} of $S$ is characterized by the following set of pairs of relatively prime integers:
\[\left\{(p_1,q_1),(p_2,q_2),(p_3,q_3),(p_4,q_4)\right\},\]
where $p_i> q_i> 0$, and the $p_i$'s are the orders of the local fundamental groups of the singularities. A key property of $S$, which was instrumental in obtaining the above results, is the following \emph{orbifold Bogomolov-Miyaoka-Yau \textup{(}BMY\textup{)} inequality} (oBMY inequality, Theorem \ref{thm:oBMY}):  
\[ 
    K_S^2\leq 3e_{\textup{orb}}(S).
\]
In particular, $e_{\textrm{orb}}(S)\geq 0$ and this implies that the 4-tuple $(p_1,p_2,p_3,p_4)$ of the orders of the local fundamental groups of the singularities must  be one of the following cases: 
\begin{enumerate}[label=Case (\arabic*), leftmargin=2cm]
    \item $(2,3,5,n)$, $n\geq 7$ with $(n,30)=1$, 
    \item $(2,3,7,n)$, $n\in \{11,13,17,19,23,25,27,29,31,37,41\}$, 
    \item $(2,3,11,13)$.
\end{enumerate}

Our goal is to examine the conjecture through the lens of smooth 4-manifold theory, based on the following observation: At each singular point of $S$, a neighborhood can be chosen that is homeomorphic to the cone over a lens space $L(p_i,q_i)$. By excising these neighborhoods from $S$, we are left with a simply-connected, compact, oriented, smooth 4-manifold with $b_2=b_2^+=1$, and its boundary comprises the disjoint union $L(p_1,q_1) \amalg \cdots \amalg L(p_4,q_4)$ of lens spaces. Further, the removal of three thickened arcs that join these boundary components transforms this manifold into a simply-connected, compact, oriented, smooth 4-manifold with $b_2=b_2^+=1$ (by the standard argument in algebraic topology), whose boundary is now the connected sum $L(p_1,q_1)\# \cdots \# L(p_4,q_4)$ of lens spaces. This leads us to propose a smooth analogue of the algebraic Montgomery-Yang problem, which is stronger than the original conjecture.

\begin{problem}\label{prob:smooth}
    Among types $\{(p_1,q_1),(p_2,q_2),(p_3,p_4),(p_4,q_4)\}$ satisfying the orbifold BMY inequality, which connected sums  $L(p_1,q_1)\#L(p_2,q_2)\#L(p_3,q_3)\#L(p_4,q_4)$ of lens spaces bound simply-connected, compact, oriented, smooth 4-manifolds with $b_2=b_2^+=1$?
\end{problem}

The $p_i$'s and the $q_i$'s completely determine both $K_S^2$ and $e_{\textrm{orb}}(S)$ (see Section \ref{subsec:computation_of_K_S^2}), making it unambiguous to state that the type $\{(p_1,q_1),\dots,(p_4,q_4)\}$ satisfies the oBMY inequality. Note also that if one can demonstrate the non-existence of such connected sums of lens spaces as posed in Problem \ref{prob:smooth}, then the algebraic Montgomery-Yang problem would be resolved. Research addressing whether a given 3-manifold can bound 4-manifolds under specific conditions has been one of active topics in the study of 4-manifolds topology. A notable contribution to this field, serving as a motivation for this paper, is a result by Lisca \cite{Lisca-2007}, which classifies all lens spaces that smoothly bound a rational homology $4$-ball. This classification supports the proof of the slice-ribbon conjecture for $2$-bridge knots.

\begin{remark} 
    A smooth $S^1$-action on an odd-dimensional sphere $S^{2k-1}$, which is free except for finitely many orbits with isotropy types $\mathbb{Z}_{p_1},\dots,\mathbb{Z}_{p_n}$ of pairwise relatively prime orders, is called a \textit{pseudo-free} $S^1$-action. The original Montgomery-Yang problem asserts that a pseudo-free $S^1$-action on $S^5$ has at most three non-free orbits. Through a specific one-to-one correspondence between pseudo-free $S^1$-actions and their orbit spaces $S^5/S^1$ (as detailed in \cite[Theorem 8]{Kollar-2008}), this problem can be reformulated as:
    \begin{conjecture}[Montgomery-Yang Problem]\label{conj:original_MY} Let $M$ be a simply-connected, compact, smooth 4-manifold with $H_2(M;\Bbb Z)=\Bbb Z$ such that $\partial M$ is a connected sum $L(p_1,q_1)\# \cdots \#L(p_n,q_n)$ of lens spaces where $p_1,\dots,p_n>1$ are pairwise relatively prime. Then $n\leq 3$.     
    \end{conjecture}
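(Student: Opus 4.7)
The plan is to derive a contradiction assuming $n\geq 4$ by combining the smooth obstructions highlighted in the introduction. Since $M$ is simply connected with $H_2(M;\mathbb{Z})=\mathbb{Z}$ and $\partial M$ a rational homology sphere, the intersection form of $M$ is a rank-one form $\langle \pm N\rangle$ with $N := p_1 p_2\cdots p_n$; reversing the orientation of $M$ if necessary, we may assume this form is $\langle N\rangle$. For each summand $L(p_i,q_i)$ let $P_i$ denote the canonical negative-definite linear plumbing 4-manifold bounding $-L(p_i,q_i)$, built from the Hirzebruch--Jung continued fraction of $p_i/q_i$. Then
\[
X \;:=\; (-M)\,\cup_\partial\,\bigl(P_1\,\natural\,\cdots\,\natural\,P_n\bigr)
\]
is a closed, oriented, smooth 4-manifold whose intersection form is negative definite and, modulo torsion, splits as $\langle -N\rangle \oplus \bigoplus_{i=1}^n \Lambda(P_i)$ on $H_2(X;\mathbb{Z})$ (each piece has torsion-free $H_2$ and the common boundary is a rational homology sphere).

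By Donaldson's diagonalization theorem, $Q_X \cong \langle -1\rangle^{\oplus k}$ with $k = 1+\sum_i \ell_i$, where $\ell_i$ is the length of the continued fraction of $p_i/q_i$. Restricting the resulting isometric embedding to each sublattice $\Lambda(P_i)$, together with the placement of the generator of $\langle -N\rangle$, yields sharp combinatorial constraints on the continued-fraction coefficients in the spirit of Lisca's classification of embeddings of linear plumbing lattices into the standard diagonal lattice. Running in parallel, use Heegaard Floer correction terms: since $\pm M$ is a definite filling of $\pm Y$ for $Y := \#_i L(p_i,q_i)$, each spin$^c$ structure on $Y$ extending over $M$ must satisfy a $d$-invariant inequality controlling $d(Y,\mathfrak{s})$ by a characteristic-class square. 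Pairwise coprimality of the $p_i$ forces spin$^c$ structures on $Y$ to split uniquely across the connected sum, and by additivity of $d$-invariants together with the Ozsv\'ath--Szab\'o recursive formula for lens spaces, this translates into an explicit arithmetic system in the tuples $(p_i,q_i)$.

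The main obstacle is that neither obstruction provides an a priori upper bound on $n$, so the argument must exploit pairwise coprimality in an essential way. On the Donaldson side one must show that, once $n\geq 4$, the $n$ plumbing lattices of pairwise coprime orders, together with a class of square $-N=-p_1\cdots p_n$, cannot jointly embed as orthogonal summands in $\langle -1\rangle^{\oplus k}$; on the correction-term side the resulting arithmetic inequalities must be forced unsatisfiable as $n$ grows. For the $n=4$ case addressed in this paper, the orbifold BMY inequality underlying Problem \ref{prob:smooth} cuts the analysis down to the finite list of tuples in Cases (1)--(3), where the Donaldson and $d$-invariant obstructions can be checked type by type. Removing the BMY input and handling arbitrary $n$ uniformly appears to demand a genuinely new global structural argument and is the heart of the open problem.
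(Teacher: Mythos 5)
This statement is Conjecture~\ref{conj:original_MY}, an open conjecture that the paper does not prove; there is no ``paper's own proof'' to compare against. The paper only establishes partial results in the direction of this conjecture: it restricts attention to the case $n=4$ under the additional hypothesis that the type $\{(p_i,q_i)\}$ satisfies the orbifold BMY inequality (i.e.\ falls into the lists arising from Problem~\ref{prob:smooth}), and there it applies Donaldson's diagonalization via Corollary~\ref{cor:2.6}, the Kervaire--Milnor congruence (Theorem~\ref{thm:2.7}), the orthogonal-complement index argument (Proposition~\ref{prop:2.8}), and the spin $d$-invariant condition (Corollary~\ref{cor:2.10}) to rule out Cases (2) and (3) entirely (Theorem~\ref{thm:main-1}) and large ranges of Case (1) (Theorems~\ref{thm:experimental} and~\ref{thm:infinite}). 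Your write-up accurately describes this toolkit and the structure $X = (-M)\cup_\partial(P_1\natural\cdots\natural P_n)$ used to invoke Donaldson's theorem, and you are right to observe near the end that this machinery does not yield an a~priori bound on $n$ and that the full conjecture remains open. In other words, you have not supplied a proof, but you have correctly diagnosed why none is available here: the paper's obstructions operate type by type (after the BMY reduction to finitely or parametrically many types), and the paper itself exhibits an infinite family (Proposition~\ref{prop:mysterious}) that slips past every condition discussed, so even the restricted $n=4$, BMY-constrained version is not settled by these methods, let alone arbitrary~$n$. If the intent was to prove the statement, there is a fundamental gap; if the intent was to explain why it is open and what partial progress looks like, your account matches the paper.
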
   
    \noindent Thus, Problem \ref{prob:smooth} can be considered as a special case of the original Montgomery-Yang problem. 
\end{remark}

We define a singularity type $\{(p_1,q_1),\dots,(p_n,q_n)\}$ as \emph{realized by a rational homology $\mathbb{CP}^2$} if there exists a rational homology $\mathbb{CP}^2$ with $n$ cyclic singularities of this specific type. Our primary result is the exclusion of Case $(2)$ and Case $(3)$ stated below Conjecture \ref{conj:AMY2}. 

\begin{theorem}\label{thm:main-1}
    If a rational homology $\mathbb{CP}^2$ with simply-connected smooth locus has four cyclic singularities, then the orders of the local fundamental groups at these singularities are given by 
    \[(p_1, p_2, p_3, p_4)=(2,3,5, p_4).\]  
\end{theorem}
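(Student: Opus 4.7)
My plan is to translate the existence of $S$ into a filling problem for smooth $4$-manifolds and obstruct it via the Donaldson diagonalization theorem and Heegaard Floer correction terms.

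For the setup, removing small cone neighborhoods of the four singular points from $S$ gives a smooth, simply-connected $4$-manifold $M$ with $b_2(M)=b_2^+(M)=1$ and $\partial M=\coprod_{i=1}^{4}L(p_i,q_i)$. Reversing orientation, $\overline M$ becomes negative definite with $b_2(\overline M)=1$ and $\partial\overline M=\coprod_i L(p_i,p_i-q_i)$. For each $i$, attach to $L(p_i,p_i-q_i)$ the negative-definite linear plumbing $P_i^{\vee}$ whose weighted graph realizes the Hirzebruch--Jung continued fraction expansion of $p_i/(p_i-q_i)$. The resulting closed $4$-manifold $X=\overline M\cup\bigl(\bigsqcup_i P_i^{\vee}\bigr)$ is smooth, simply-connected (by van Kampen), and negative definite, with $b_2(X)=1+\sum_i\ell_i$ where $\ell_i$ is the length of the continued fraction.

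Applying Donaldson's diagonalization theorem, $(H_2(X;\mathbb Z),Q_X)\cong(\mathbb Z^{b_2(X)},-I)$, so the direct sum of plumbing lattices $\bigoplus_i\Lambda(P_i^{\vee})$ embeds isometrically into $(\mathbb Z^{b_2(X)},-I)$ with orthogonal complement of rank one. This is a strong lattice-embedding constraint, analogous to Lisca's for rational-ball fillings but with one extra generator of negative self-intersection coming from $\overline M$. In parallel, removing thickened arcs from $\overline M$ connecting its boundary components gives a simply-connected negative-definite $4$-manifold $\overline{M}'$ with $b_2(\overline{M}')=1$ and $\partial\overline{M}'=\#_{i=1}^{4}L(p_i,p_i-q_i)$; to this filling the Ozsv\'ath--Szab\'o inequality applies:
\[
d\!\left(\#_{i=1}^{4}L(p_i,p_i-q_i),\mathfrak t\right)\ \geq\ \frac{c_1(\mathfrak s)^2+1}{4}
\]
for every $\mathrm{Spin}^c$ structure $\mathfrak s$ on $\overline M'$ restricting to $\mathfrak t$. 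Combining additivity of $d$ under connected sum with the Ozsv\'ath--Szab\'o recursive formula for lens-space correction terms turns each such inequality into an explicit numerical constraint on $(p_i,q_i)$.

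For the case analysis, the orbifold BMY inequality and the condition $H_1(S^0;\mathbb Z)=0$ leave only finitely many candidate tuples $(q_1,\dots,q_4)$ for each $(p_1,p_2,p_3,p_4)$ in Cases $(2)$ and $(3)$. For each candidate I would first test whether the lattice embedding from Donaldson exists; when it does, I would invoke the $d$-invariant inequalities above to force a contradiction. The main obstacle---and the likely source of the \emph{computational results} referenced in the abstract---is organizing this finite case-check so that at least one obstruction fires for every candidate; in particular, identifying which $\mathrm{Spin}^c$ structures on the boundary extend over $\overline M'$ requires tracking the image of $H^2(\overline M')\to H^2(\partial\overline M')$. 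Once this verification is complete, no admissible tuple remains in Cases $(2)$ or $(3)$, leaving only $(p_1,p_2,p_3,p_4)=(2,3,5,p_4)$.
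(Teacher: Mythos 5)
Your translation to a filling problem and the choice of tools (Donaldson diagonalization plus Heegaard Floer correction terms) are both reasonable, and they do appear in the paper. But your proposal diverges from the paper's actual proof in a crucial way, and it leaves the key verification unaddressed.

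The paper does not attempt a broad finite case-check of Cases (2) and (3) using smooth obstructions alone. Its proof of Theorem~\ref{thm:main-1} first invokes an \emph{algebraic-geometric} necessary condition that you omit entirely: the perfect squareness of $D = p_1p_2p_3p_4 K_S^2$ (Proposition~\ref{prop:D_square}, due to Hwang--Keum). Among the $1008 + 84 = 1092$ types in Cases (2) and (3), this condition alone cuts the candidates to the $24$ types of Table~\ref{tab:1}; the orbifold BMY inequality then reduces to the single type $\{(2,1),(3,1),(7,6),(19,2)\}$; and finally one explicit Donaldson lattice-embedding argument (Theorem~\ref{thm:3.1}) kills that one. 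This is a tidy proof with exactly one lattice computation.

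By contrast, your plan is to run orbifold BMY and then rely on Donaldson plus $d$-invariant inequalities to eliminate every survivor. You acknowledge the obstacle of ``organizing this finite case-check so that at least one obstruction fires for every candidate,'' but the paper's own data suggests this is exactly where the plan is at risk: Table~\ref{tab:2} lists $13$ types in Case (2) that pass \emph{all} of the topological/smooth obstructions the authors consider (linking form, Donaldson, Kervaire--Milnor, the orthogonal-complement argument of Proposition~\ref{prop:2.8}, and the spin $d$-invariant condition of Corollary~\ref{cor:2.10}). For your route to close the argument, each of those $13$ types must be eliminated by the orbifold BMY/ampleness window $0 < K_S^2 \leq 3e_{\textup{orb}}(S)$; you neither state this nor check it, and it is not automatic (though spot computations suggest $K_S^2 < 0$ for several of them). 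In other words, you have replaced the paper's ``squareness of $D$'' filter with the hope that smooth obstructions suffice, and that is a genuine gap, not a stylistic difference.

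Two smaller points. First, the Ozsv\'ath--Szab\'o negative-definite-bounding inequality over all $\textup{Spin}^c$ structures is indeed a potentially finer tool than Corollary~\ref{cor:2.10}, which applies only to spin structures; but the paper's proof of Theorem~\ref{thm:main-1} needs no $d$-invariant argument at all. Second, your proposed approach \emph{does} work for Case (3) with purely smooth tools --- this is precisely Theorem~\ref{thm:(2,3,11,13)} --- but that is established there by a separate and more careful argument (Donaldson plus the linking form condition plus Kervaire--Milnor), not by the $d$-invariant inequality you outline, and it does not extend to Case (2).
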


In fact, the conclusion that the singularity types in Case $(3)$ cannot be realized by a simply-connected rational homology $\mathbb{CP}^2$ follows from the consequences in \cite[Section 5]{Hwang-Keum-2013} and \cite[Theorem 1.2]{Hwang-Keum-2014}. We actually show a slightly stronger statement that the connected sum of such lens spaces corresponding to Case (3) cannot be the boundary of a simply-connected \textit{smooth} rational homology $\mathbb{CP}^2$. 

In addressing Case (2) and Case (3), we discovered that the singularity types meeting the criteria set by both algebraic geometry and smooth 4-manifolds theory are exceedingly rare. This observation underscores the significant efficacy of smooth 4-manifolds theory in addressing this problem. The practical effectiveness of these theoretical results is further demonstrated in Case (1) (where $(p_1,p_2,p_3,p_4)=(2,3,5,n)$) for smaller values of $n$. This is verified using a computer program:

\begin{theorem}\label{thm:experimental}\leavevmode 
For the singularity type $(2, 3, 5, p_4)$, we have 
\begin{enumerate}[label=\normalfont(\arabic*)]
    \item  If $p_4 < 2599$, this singularity type cannot be realized by a rational homology $\mathbb{CP}^2$ with simply-connected smooth locus. 
    \item If $p_4 < 50000$, this singularity type cannot be realized by a rational homology $\mathbb{CP}^2$ with simply-connected smooth locus, possibly except for up to $16$ cases.
\end{enumerate}    
\end{theorem}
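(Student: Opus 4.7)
The plan is to reduce Theorem \ref{thm:experimental} to a finite computer search built on the smooth obstructions developed for Theorem \ref{thm:main-1}. For each fixed $p_4$ with $\gcd(p_4,30)=1$, the possible singularity types of the form $\{(2,1),(3,q_2),(5,q_3),(p_4,q_4)\}$ form a finite set: up to the equivalence $q \sim q^{-1} \pmod{p_i}$, which produces orientation-preservingly diffeomorphic lens spaces, there is a single choice for each of $q_1, q_2$, at most two choices for $q_3$, and at most $\varphi(p_4)/2$ choices for $q_4$. First I would enumerate these candidates and discard any that violate the oBMY inequality; this is a direct arithmetic check using the formulas for $K_S^2$ and $e_{\mathrm{orb}}(S)$ in terms of the Hirzebruch--Jung continued-fraction expansions of the $p_i/q_i$.

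For each surviving type $\tau$, form the connected sum $Y_\tau := L(2,1)\#L(3,q_2)\#L(5,q_3)\#L(p_4,q_4)$ and test whether $Y_\tau$ bounds a simply-connected smooth $4$-manifold $W$ with $b_2=b_2^+=1$. I would apply two obstructions in sequence. The first is Donaldson's diagonalization theorem: glue $W$ to the canonical negative-definite plumbing $P$ with $\partial P = -Y_\tau$ (whose linking matrix is read off from the continued-fraction data); after splitting off the positive-definite summand $\langle 1 \rangle$, the resulting negative-definite lattice must embed as a full-rank sublattice of the standard diagonal lattice $-\mathbb{Z}^N$, a condition checkable by a bounded depth-first search over vectors of prescribed norms. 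The second obstruction comes from Heegaard Floer correction terms: for every $\mathrm{spin}^c$ structure on $Y_\tau$ that extends over $W$, a $d$-invariant inequality must hold, and since the $d$-invariants of lens spaces are computed by the Ozsv\'ath--Szab\'o recursive formula, these inequalities reduce to purely arithmetic conditions.

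The main obstacle will be computational efficiency rather than any new theoretical input: as $p_4$ grows, both the number of candidate tuples and the rank $N$ of the plumbing lattice grow, and the lattice-embedding search is the bottleneck. For $p_4 < 2599$ the candidate set is small enough that every type surviving the oBMY filter is excluded by one of the two obstructions, yielding part (1). Extending the range to $p_4 < 50000$ requires more aggressive pruning in the embedding search and parallelized execution; part (2) is then the bookkeeping statement that at most $16$ candidates survive both tests. These residual cases correspond to connected sums whose negative-definite lattices admit a formal embedding into $-\mathbb{Z}^N$ that is consistent with the $d$-invariants, and would require finer invariants to eliminate.
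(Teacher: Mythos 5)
Your overall strategy matches the paper's: reduce to a finite, computer-assisted search over types $\{(2,1),(3,q_2),(5,q_3),(p_4,q_4)\}$, filtering first by an algebro-geometric condition and then by smooth obstructions from Donaldson's theorem and Heegaard Floer correction terms. However, there are two substantive problems with your proposal as written.

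\textbf{The Donaldson obstruction is set up incorrectly.} You propose gluing the putative $b_2^+=1$ manifold $W$ (with $\partial W = Y$) directly to the negative-definite plumbing $P$ with $\partial P = -Y$, then ``splitting off a $\langle 1\rangle$ summand'' to conclude a full-rank embedding of the plumbing lattice into a diagonal lattice. But the resulting closed manifold is \emph{indefinite} ($b_2^+=1$, $b_2^- = b_2(P)$), so Donaldson's diagonalization theorem does not apply, and a negative-definite sublattice of $\langle 1\rangle\oplus N\langle -1\rangle$ need not live inside the $N\langle -1\rangle$ part (e.g. $e_0-e_1-e_2$ has norm $-1$). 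The correct construction, Corollary \ref{cor:2.6} of the paper, takes $X = \natural_i X(p_i,q_i)$ with $\partial X = Y$ and glues it to $-M$ (orientation-reversed), producing a closed \emph{negative definite} $4$-manifold; Donaldson then gives a \emph{codimension-one} embedding $Q_X \hookrightarrow -\mathbb{Z}^{b_2(X)+1}$. This is what \texttt{OrthogonalEmbeddings} tests. Your version neither produces a definite manifold nor the right codimension, so the test you describe is not the one that actually obstructs these types.

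\textbf{You omit necessary conditions that the count of $16$ depends on.} The paper's filter includes, in addition to oBMY, Donaldson, and the spin $d$-invariant, the perfect squareness of $D$ (Proposition \ref{prop:D_square}), the linking form condition (Proposition \ref{prop:linking_form}), the Kervaire--Milnor theorem (Theorem \ref{thm:2.7}), and the orthogonal complement argument (Proposition \ref{prop:2.8}). These are not mere efficiency boosters; they are load-bearing for part (2). The paper's independence examples exhibit types with $p_4 < 50000$ that survive every condition you list but fail one you omit: $\{(2,1),(3,2),(5,1),(3529,1880)\}$ passes oBMY, the linking form test, Donaldson, Kervaire--Milnor, Proposition \ref{prop:2.8}, and the $d$-invariant, and is killed only by squareness of $D$; while $\{(2,1),(3,2),(5,1),(9409,5519)\}$ is killed only by Proposition \ref{prop:2.8}. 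With your restricted filter the survivor count would exceed $16$, so part (2) would not follow as stated. (Minor points: there are three admissible $q_3$, namely $1,2,4$, since $2^{-1}\equiv 3\pmod 5$; and $q_2=2$ is forced not by $q\sim q^{-1}$ but by the cited lemma of Hwang--Keum.)
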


The specifics of possible 16 exceptional cases mentioned in 
Theorem~\ref{thm:experimental} above will be detailed in Section \ref{subsec:experimental_result}. Additionally we have discovered that, while large families of singularity type $(2,3,5,n)$ may not be constrained by algebraic geometry conditions, they can indeed be obstructed through an application of smooth 4-manifolds theory.

\begin{theorem}\label{thm:infinite}
    There exist infinite families of singularity type $(2,3,5,n)$ that satisfy the oBMY inequality \textup{(}Theorem \ref{thm:oBMY} \textup{(1))} but can be obstructed from realizing a rational homology $\mathbb{CP}^2$ by the condition from Donaldson's diagonalization theorem \textup{(}Corollary \ref{cor:2.6}\textup{)}.
\end{theorem}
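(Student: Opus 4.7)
The plan is to construct an explicit infinite family of singularity types $\{(2,1),(3,q_2),(5,q_3),(n,q_4)\}$ that satisfies the oBMY inequality (Theorem~\ref{thm:oBMY}) yet violates the Donaldson-based embedding condition of Corollary~\ref{cor:2.6}. The argument splits into two essentially independent tasks: verifying the oBMY inequality along the family, and triggering the lattice obstruction of Corollary~\ref{cor:2.6}.

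For the first task, I would make the oBMY inequality fully explicit in Case (1). Since $K_S^2$ and $e_{\mathrm{orb}}(S)$ admit closed-form expressions in the $(p_i,q_i)$'s via their negative continued fraction expansions (Section~\ref{subsec:computation_of_K_S^2}), the condition $K_S^2 \le 3 e_{\mathrm{orb}}(S)$ becomes a concrete arithmetic inequality in $q_2,q_3,q_4,n$. With $q_2,q_3$ fixed at small admissible values (for instance, $q_2 = 1$ and some small $q_3$), the inequality becomes asymptotically easier to satisfy as $n$ grows. A natural parametrization is $q_4(n) = n - k$ for a fixed small integer $k$, so that $p_4/(p_4 - q_4) = n/k$ has a short, predictable continued fraction. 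One then verifies the oBMY inequality directly for all sufficiently large $n$ in the family.

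For the second task, I would apply Corollary~\ref{cor:2.6}. If $\#_i L(p_i,q_i)$ bounds a simply-connected smooth 4-manifold $X$ with $b_2 = b_2^+ = 1$, then gluing $X$ to the negative definite linear plumbings $P_i$ bounding $-L(p_i,q_i)$ and applying Donaldson's diagonalization theorem produces a constraint forcing the negative definite lattice $\bigoplus_i \Gamma_{P_i}$ to embed into a standard diagonal lattice $-\mathbb{Z}^N$ subject to a characteristic vector condition inherited from the rank-one positive part of $Q_X$. My plan is to arrange $q_3$ and $k$ so that the small rigid sublattices from $L(3,q_2)$ and $L(5,q_3)$ occupy a configuration in $-\mathbb{Z}^N$ that cannot be completed to accommodate the chain coming from $L(n,q_4(n))$.

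The main obstacle is the combinatorial non-embedding step: the existence of an infinite family requires a uniform obstruction, not just a sporadic one. I would handle this by performing a finite base-case lattice computation at the smallest $n$ in the family and then propagating the obstruction along the family by a monotonicity argument. As $n$ increases along $q_4(n) = n - k$, the sublattice $\Gamma_{P_4}$ grows by appending additional basis vectors of self-intersection $-2$ (governed by the expansion of $n/k$), and any hypothetical embedding for large $n$ would restrict to an embedding for smaller $n$; hence a non-embedding at one value of $n$ propagates to an infinite tail. Combining this with the asymptotic validity of the oBMY inequality along the same family yields the desired infinite family.
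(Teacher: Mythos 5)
Your plan has the right high-level shape — find a one-parameter family, verify oBMY asymptotically, and run a Donaldson-type lattice obstruction uniformly — but the concrete parametrization you propose fails the first task outright, and the propagation idea for the second task has a codimension problem.

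\textbf{The parametrization fails oBMY.} You propose $q_4(n) = n-k$ for a fixed small $k$, so that $p_4/(p_4-q_4) = n/k$ has a short continued fraction. That is precisely the wrong regime. Recall formula (\ref{eq:KS_square}): with $(p_1,p_2,p_3)=(2,3,5)$, the contributions of the first three lens spaces to $K_S^2$ are bounded constants, and the fourth contributes $\sum_j n_{4,j} - 3\ell_4 - (q_4 + q_4^{-1}-2)/p_4$, where $[n_{4,1},\dots,n_{4,\ell_4}]$ is the continued fraction of $n/k$ and the last term lies in $[0,2)$. For fixed $k$ and growing $n$, the fraction $n/k$ has bounded length $\ell_4$ but its leading coefficient grows linearly, so $\sum_j n_{4,j} - 3\ell_4 \to +\infty$ and hence $K_S^2 \to +\infty$. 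Meanwhile $3e_{\mathrm{orb}}(S) = \tfrac{1}{10} + \tfrac{3}{n} \to \tfrac{1}{10}$. Your family therefore violates the oBMY inequality for all sufficiently large $n$ rather than satisfying it. The computation before Equation (\ref{eq:c_and_KS^2}) shows that oBMY forces $\sum_j n_{4,j} - 3\ell_4 \in \{-2,-3\}$ (for $q_3=1$, with analogous small sets for $q_3=2,4$), which requires the continued fraction of $p_4/(p_4-q_4)$ to have nearly all entries equal to $2$ with a few larger ones, balanced so the quantity stays constant — not a short fraction with a large leading term. The paper's Lemma \ref{lem:3.4} achieves this by letting $p_4/q_4 = [[2]^k, \mathbf{c}, m+k, \mathbf{b}]$ with a block of $2$'s and a single entry $m+k$ both growing with $k$, which keeps $\sum m_i - 3t$ constant and makes $K_S^2$ converge to an explicit limit as $k\to\infty$.

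\textbf{The monotonicity argument for the Donaldson obstruction does not close.} You argue that if the lattice $\Gamma_{P_4}$ for larger $n$ contains the lattice for smaller $n$ as a sublattice, then an embedding into $-\mathbb{Z}^{N}$ for large $n$ restricts to one for small $n$, so non-embedding propagates. But the ambient rank grows with $n$: if $\Gamma_n \subset \Gamma_{n+1}$ with $\mathrm{rank}(\Gamma_{n+1}) = \mathrm{rank}(\Gamma_n)+1$, an embedding $\Gamma_{n+1} \hookrightarrow -\mathbb{Z}^{\mathrm{rank}(\Gamma_{n+1})+1}$ only restricts to $\Gamma_n \hookrightarrow -\mathbb{Z}^{\mathrm{rank}(\Gamma_n)+2}$, a codimension-two embedding. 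The obstruction in Corollary \ref{cor:2.6} is codimension one, so a non-embedding at a base case does not propagate up the family. The paper instead gives a direct, uniform combinatorial argument (Proposition \ref{lem:Donaldson_and_2's}): the chains of $(-2)$-vertices coming from the $[2]^k$ blocks are forced, up to change of basis, onto consecutive standard differences $e_i - e_{i+1}$; counting the coordinates consumed by these chains together with the $X(2,1)\natural X(3,2)\natural X(5,q_3)$ piece shows there is no room in $-\mathbb{Z}^{b_2(X)+1}$ whenever there are enough $[2]$-blocks relative to the non-$2$ entries. That counting works uniformly in $k$, with no base case or induction.

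In short, you need to replace the $q_4 = n-k$ parametrization with one in which the continued fraction of $p_4/q_4$ has a growing block $[2]^k$ \emph{and} a growing entry $m+k$ (so that $\sum m_i - 3t$ is independent of $k$ and $K_S^2$ stabilizes), and replace the propagation argument with a direct counting obstruction that holds uniformly in $k$. These are exactly Lemma \ref{lem:3.4} and Proposition \ref{lem:Donaldson_and_2's} of the paper, applied to families such as $\{(2,1),(3,2),(5,4),(9k^2+17k+17,\,9k^2+8k+9)\}$.
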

In Section \ref{subsec:BMY_and_Donaldson}, We will present a detailed method for identifying these infinite families of singularity types. Our results indicate that a significant portion of singularities, specifically those of the type $(2,3,5,n)$, can be excluded from a consideration in the algebraic Montgomery-Yang problem. Nevertheless, there still remain infinitely many singularity types that are not ruled out by the conditions explored in this paper. An example of one such infinite family is detailed in Section \ref{subsec:mysterious_infinite_family}.

\subsection*{Acknowledgements}
The authors thank all members of the 4-manifold topology group at Seoul National University (SNU) for their invaluable comments and insights throughout this work. Jongil Park was supported by the National Research Foundation of Korea (NRF) grant funded by the Korean government (No.2020R1A5A1016126 and No.2021R1A2C1095776). 
He is also affiliated with the Research Institute of Mathematics at SNU. Kyungbae Park was supported by NRF grant funded by the Korea government (No.2021R1A4A3033098 and No.2022R1F1A1071673).

\section{Preliminary}\label{sec:conditions}
In this section, we explore the necessary conditions for cyclic singularity types \[\{(p_1,q_1),\dots,(p_n,q_n)\}\] to be realized by a rational homology $\mathbb{CP}^2$ in both algebraic geometry and the study of topological and smooth 4-manifolds. 

In this paper, the lens space $L(p,q)$ is defined as the oriented 3-manifold obtained by $-(p/q)$ surgery along the unknot in $S^3$. It is a well-known fact that two lens spaces $L(p,q)$ and $L(p',q')$ are orientation-preserving homeomorphic if and only if $p=p'$ and $q'$ is congruent to either $q$ or its inverse modulo $p$ (i.e. $q'=q^{\pm 1}\mod p$). Consequently, the $p_i$'s are uniquely determined by the type and the $q_i$'s are well-defined up to taking multicative inverse modulo $p_i$.

Let $X(p,q)$ denote the \emph{canonical negative definite plumbed 4-manifold} bounded by $L(p,q)$ as described below. Expand $p/q$ into its uniquely determined Hirzebruch-Jung continued fraction as follows:
\[ 
\frac{p}{q}=[a_1,\dots,a_\ell]:=a_1-\frac{1}{a_2-\displaystyle\frac{1}{\cdots-\displaystyle\frac{1}{a_\ell}}} \ ~~(a_i\geq 2)
\]
Here, $X(p,q)$ refers to the plumbed 4-manifold constructed from the linear graph in Figure \ref{fig:plumbing_graph_of_X(p,q)} (cf. \cite[Exercise 5.3.9(b)]{Gompf-Stipsicz-1999}). 
\begin{figure}[!th]
\centering
\begin{tikzpicture}[scale=1.1]
\draw (-2,0) node[circle, fill, inner sep=1.2pt, black]{};
\draw (-1,0) node[circle, fill, inner sep=1.2pt, black]{};
\draw (1,0) node[circle, fill, inner sep=1.2pt, black]{};
\draw (2,0) node[circle, fill, inner sep=1.2pt, black]{};

\draw (-2,0) node[below]{$-a_1$};
\draw (-1,0) node[below]{$-a_2$};
\draw (1,0) node[below]{$-a_{\ell-1}$};
\draw (2,0) node[below]{$-a_\ell$};

\draw (0,0) node{$\cdots$};

\draw (-2,0)--(-1,0) (-1,0)--(-0.5,0) (0.5,0)--(1,0)  (1,0)--(2,0) ;
\end{tikzpicture}
\caption{The plumbing graph of $X(p,q)$.}
\label{fig:plumbing_graph_of_X(p,q)}
\end{figure}
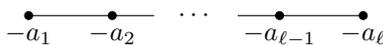

Since $L(p,p-q)$ is homeomorphic to the oriented reversal of $L(p,q)$, we deduce that $-L(p,q)$ is the boundary of the negative definite plumbed 4-manifold $X(p,p-q)$.

\begin{example} Let $p_1,p_2,p_3\geq 2$ be pairwise relatively prime integers. The \emph{weighted projective plane} 
\[ \mathbb{CP}(p_1,p_2,p_3):=(\mathbb{C}^3\setminus\{0\})/(x,y,z)\sim (\lambda^{p_1}x,\lambda^{p_2}y,\lambda^{p_3}z)~~ (\lambda \in \Bbb C\setminus\{0\})\]
is a simply-connected rational homology $\mathbb{CP}^2$ with three cyclic singularities of type \[\{(p_1,q_1), (p_2,q_2), (p_3,q_3)\},\] where the $q_i$'s are determined (up to taking the inverse modulo $p_i$) by the equation \[-q_ip_j=p_k \mod p_i\quad\text{ for } \{i,j,k\}=\{1,2,3\}.\]   
\end{example}

\subsection{Conditions from algebraic geometry}
As mentioned in the Introduction, most known results on algebraic Montgomery-Yang problem were obtained by some constraints coming from the orbifold BMY inequalities and $K_S^2$. We begin with such key ingredients used in algebraic geometry.

\subsubsection{The orbifold BMY inequalities}\label{subsec:BMY}
For a normal projective surface $S$ with quotient singularities, its \emph{orbifold Euler characteristic}, denoted as $e_{\textup{orb}}(S)$, is defined by the following formula: 
\[ 
    e_{\textup{orb}}(S):= e(S) - \sum_{p\in \textup{Sing}(S)} \left(1-\frac{1}{|G_p|} \right),
\]
where $G_p$ represents the local fundamental group at point $p$, which is the fundamental group of the link of $S$ at $p$. A key condition for $S$ is the orbifold Bogomolov-Miyaoka-Yau (BMY) inequality.

\begin{theorem}\label{thm:oBMY} 
    Let $S$ be a normal projective surface with quotient singularities. 
    \begin{enumerate}[label=\normalfont(\arabic*)]
        \item \textup{(Orbifold BMY inequality) \cite{KoNS-1989,Megyesi-1999,Miyaoka-1984,Sakai-1980}}  If the canonical class $K_S$ of $S$ is nef (numerically effective), then 
        \begin{equation*} 
            K^2_S \leq 3e_{\textup{orb}}(S).
        \label{eq:oBMY}\end{equation*}
        In particular, \[0\leq e_{\textup{orb}}(S).\]
        \item \textup{(Weak orbifold BMY inequality) \cite{KeM-1999}}  If $-K_S$ is nef, then 
        \[ 0\leq e_{\textup{orb}}(S). \]
    \end{enumerate} 
\end{theorem}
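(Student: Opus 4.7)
The plan is to reduce both statements to the classical Bogomolov--Miyaoka--Yau inequality $c_1^2\leq 3c_2$ for smooth minimal surfaces of general type by systematically replacing ordinary Chern numbers with their \emph{orbifold} counterparts. First I would pass to the minimal resolution $\pi\colon \widetilde S\to S$; since the singularities are quotient, the exceptional locus over each $p_i\in\mathrm{Sing}(S)$ is a tree of smooth rational curves whose dual graph is determined by the Hirzebruch--Jung continued fraction expansion of $(p_i,q_i)$. Writing $K_{\widetilde S}=\pi^*K_S+\sum a_j E_j$ with log discrepancies $a_j\in\mathbb Q_{>-1}$, both the self-intersection $K_S^2=(\pi^*K_S)^2$ and the orbifold Euler number $e_{\mathrm{orb}}(S)$ become combinatorial expressions in the intersection data on $\widetilde S$ together with the orders $|G_{p_i}|$.

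The core step for (1) is then Miyaoka's semistability theorem applied in the orbifold category: when $K_S$ is nef, the orbifold cotangent sheaf — defined on a suitable orbifold cover ramified over the exceptional divisors with weights dictated by the quotient singularities — is $K_S$-semistable, and Bogomolov's discriminant inequality for this sheaf translates directly into $3c_2^{\mathrm{orb}}(S) - c_1^{\mathrm{orb}}(S)^2\geq 0$, which is exactly $K_S^2\leq 3e_{\mathrm{orb}}(S)$. Alternatively, following the analytic route of Kobayashi--Nakamura--Sakai, one establishes the existence of a Kähler--Einstein orbifold metric on $S$ via a Calabi--Yau--type argument and integrates the pointwise Chern-number inequality. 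In either approach, nefness of $K_S$ is the key hypothesis that supplies either semistability or the metric.

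For part (2), where only $-K_S$ is assumed nef, the orbifold cotangent sheaf need not be semistable and neither route above applies directly. Here I would follow the Keel--McKernan strategy: run a log minimal model program to reduce to a finite list of log del Pezzo configurations with Picard number one, and for each configuration verify $e_{\mathrm{orb}}(S)\geq 0$ by a direct computation on the dual graph of the exceptional divisors. Since we only need a one-sided inequality (not a relation to $K_S^2$), the combinatorics of the quotient singularities alone suffice; Megyesi's case-by-case analysis can be invoked to complete the bookkeeping.

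The principal obstacle is establishing orbifold semistability (or, analytically, the existence of the Kähler--Einstein orbifold metric). Unlike in the smooth case, the orbifold cover structure must be chosen compatibly with \emph{all} quotient singularities simultaneously, and Bogomolov's inequality must be extended to $\mathbb Q$-Chern classes in a way that respects the log discrepancies along every exceptional chain. This is precisely the technical heart of the Miyaoka--Sakai--Megyesi sequence of papers and is the reason the orbifold BMY inequality required multiple independent contributions — algebraic and analytic — to arrive at the form stated above.
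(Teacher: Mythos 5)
The paper does not prove Theorem~\ref{thm:oBMY}; it imports it verbatim from the literature, with \cite{KoNS-1989,Megyesi-1999,Miyaoka-1984,Sakai-1980} for part (1) and \cite{KeM-1999} for part (2). There is therefore no proof in the paper to compare your argument against — the theorem functions as a black box throughout, and everything the paper actually derives from it (the three cases for $(p_1,p_2,p_3,p_4)$, the formula for $K_S^2$, etc.) is elementary arithmetic downstream of the inequality.

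Judged on its own terms as a sketch of the cited proofs, your proposal gets the overall landscape right: part (1) is proved either by an algebraic semistability argument or by an analytic K\"ahler--Einstein orbifold argument, and part (2) proceeds by Keel--McKernan's log MMP reduction to log del Pezzo surfaces. But one step as written is not merely compressed, it is wrong: you claim that Bogomolov's discriminant inequality for the (semistable) orbifold cotangent sheaf ``translates directly'' into $3c_2^{\mathrm{orb}} - (c_1^{\mathrm{orb}})^2 \geq 0$. Bogomolov's inequality for a semistable rank-two sheaf gives $c_1^2 \leq 4c_2$, not $c_1^2 \leq 3c_2$. The passage from $4$ to $3$ is the actual content of Miyaoka's theorem and its orbifold refinements; it requires either the restriction/generic-semipositivity argument on symmetric powers of the cotangent sheaf (Miyaoka, Megyesi) or the pointwise Chern-form inequality for a K\"ahler--Einstein orbifold metric (Kobayashi, Sakai), neither of which is a formal consequence of semistability alone. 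Likewise, the construction of $\mathbb{Q}$-Chern classes and ramified covers adapted simultaneously to all the quotient singularities --- which you yourself flag as the ``technical heart'' --- is left entirely open. So the proposal is a reasonable map of the references the paper cites, but it is not a proof, and the paper itself offers none to fall back on.
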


Suppose $S$ is a simply-connected rational homology $\mathbb{CP}^2$ with four cyclic singularities. In this context, the orbifold BMY inequality implies the following condition: 
    \[ 
        \sum_{p\in \textup{Sing}(S)} \left(1-\frac{1}{|G_p|}\right) \leq 3. 
    \]
Furthemore, the condition $H_1(S^0;\mathbb{Z})=0$ implies that the orders $|G_p|$ are pairwise relatively prime \cite[Lemma 3]{Hwang-Keum-2011-1}. Consequently, the 4-tuple $(p_1,p_2,p_3,p_4)$ representing the orders of the local fundamental groups must fall into one of the following categories: 
\begin{enumerate}[label=Case (\arabic*), leftmargin=2cm]
    \item $(2,3,5,n)$, $n\geq 7$ $\mathrm{with} \ (n,30)=1$, 
    \item $(2,3,7,n)$, $n\in \{11,13,17,19,23,25,27,29,31,37,41\}$, 
    \item $(2,3,11,13)$.
\end{enumerate}

\subsubsection{Computation of $K_S^2$}\label{subsec:computation_of_K_S^2} 
Let $S$ be a rational homology $\mathbb{CP}^2$ featuring four cyclic singularities of type $\{(p_1,q_1),(p_2,q_2),(p_3,q_3),(p_4,q_4)\}$. To verify the orbifold BMY inequality for this set of singularities, we perform the following computation. Removing the cone neighborhoods of the singular points yields a compact oriented smooth 4-manifold $M$ with $b_2(M)=b_2^+(M)=1$ and $\partial M=L(p_1,q_1)\amalg L(p_2,q_2)\amalg L(p_3,q_3)\amalg L(p_4,q_4)$. By attaching the canonical negative definite plumbed 4-manifolds $X(p_1,p_1-q_1) \amalg X(p_2,p_2-q_2) \amalg X(p_3,p_3-q_3)\amalg X(p_4,p_4-q_4)$ to $M$ along its boundary, we construct a closed smooth 4-manifold $W$ with $b_2^+(W)=1$. Topologically, $W$ is identical to the minimal resolution $S'$ of $S$.

Expand $p_i/(p_i-q_i)$ into the uniquely determined Hirzebruch-Jung continued fraction as follows: 
\[
    \frac{p_i}{p_i-q_i}=[n_{i,1},\dots,n_{i,\ell_i}]:=n_{i,1}-\displaystyle\frac{1}{n_{i,2}-\displaystyle\frac{1}{\cdots-\displaystyle\frac{1}{n_{i,\ell_i}}}} \ ~~ (n_{i,j}\geq 2), ~i=1,2,3,4.
\]
Then, $b_2^-(S')=b_2^-(W)=\ell_1+\cdots+\ell_4$, and $K_{S'}^2=9-(\ell_1+\cdots+\ell_4)$ by the Noether formula. Further, according to \cite[Section 3]{Hwang-Keum-2013}, we have \[ 
K_S^2=K_{S'}^2-\sum_{i=1}^4 \left( 2\ell_i -\sum_{j=1}^{\ell_i} n_{i,j} +2 - \frac{(p_i-q_i)+(p_i-q_i^{-1})+2}{p_i} \right),
\]
where $q_i^{-1}$ is the unique inverse of $q_i$ modulo $p_i$ satisfying $0<q_i^{-1}<p_i$. This leads to the formula: 
\begin{equation}\label{eq:KS_square}
K_S^2= 9 -3L +\sum_{i,j}n_{i,j} -\sum_{i=1}^4 \frac{q_i+q_i^{-1}-2}{p_i},
\end{equation}
where $L=\ell_1+\cdots+\ell_4$.

\subsubsection{Perfect squareness of $D$}
The following obstruction from algebraic geometry, which can be demonstrated by considering the minimal resolution of $S$, is another element of our analysis.

\begin{proposition}[The perfect squareness of $D$, {\cite[Lemma 3]{Hwang-Keum-2011-1}}]\label{prop:D_square} Let $S$ be a rational homology $\mathbb{CP}^2$ with four cyclic singularities of type $\{(p_1,q_1),\dots,(p_4,q_4)\}$. If $H_1(S^0;\mathbb{Z})=0$, then \[D:=p_1p_2p_3p_4K_S^2\] is a nonzero square number. \end{proposition}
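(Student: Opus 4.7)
The plan is to pass to the minimal resolution $\pi : S' \to S$ and exploit the unimodularity of the intersection form on $H_2(S';\mathbb{Z})$. Each cyclic singularity of $S$ is replaced by a negative-definite chain of rational curves whose self-intersections are given by the Hirzebruch--Jung expansion of $p_i/(p_i-q_i)$. A Mayer--Vietoris argument on the decomposition $S' = S^0 \cup (\sqcup_i X_i)$, combined with the hypothesis $H_1(S^0;\mathbb{Z})=0$ and the simple-connectedness of the plumbings $X_i$, shows $H_1(S';\mathbb{Z})=0$. Hence $H_2(S';\mathbb{Z})$ is torsion-free and its intersection form is unimodular of signature $(1,L)$ with $L=\ell_1+\cdots+\ell_4$ as in Section~\ref{subsec:computation_of_K_S^2}.

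Let $E \subset H_2(S';\mathbb{Z})$ be the sublattice generated by the exceptional classes $E_{ij}$. It splits orthogonally as $E_1 \oplus E_2 \oplus E_3 \oplus E_4$, with $|\det(E_i)|=p_i$ (the standard determinant of a Hirzebruch--Jung plumbing matrix), so $|\det(E)| = p_1 p_2 p_3 p_4$. Since $E$ is negative-definite of rank $L$ and $b_2^+(S')=1$, the complement $E^\perp \cap H_2(S';\mathbb{Z})$ has rank one and is positive-definite; let $F$ be a primitive generator. The standard identity for orthogonal sublattices of a unimodular lattice yields
\[
\bigl[H_2(S';\mathbb{Z}) : E \oplus \mathbb{Z}F\bigr]^2 \;=\; |\det(E)| \cdot |F^2| \;=\; p_1 p_2 p_3 p_4 \cdot F^2,
\]
so $p_1 p_2 p_3 p_4 \cdot F^2 = k^2$ for the positive integer $k := [H_2(S';\mathbb{Z}) : E \oplus \mathbb{Z}F]$.

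Because each exceptional curve $E_{ij}$ contracts to a point under $\pi$, the projection formula gives $\pi^*K_S \cdot E_{ij}=0$ for all $i,j$, so $\pi^*K_S \in E^\perp \otimes \mathbb{Q}$ and we can write $\pi^*K_S = cF$ for some $c \in \mathbb{Q}$. Substituting,
\[
D \;=\; p_1 p_2 p_3 p_4 \cdot K_S^2 \;=\; p_1 p_2 p_3 p_4 \cdot c^2 F^2 \;=\; (ck)^2.
\]
By the explicit formula \eqref{eq:KS_square}, $D$ is an integer; since any rational number whose square is an integer is itself an integer, we conclude $ck \in \mathbb{Z}$, so $D$ is the square of an integer. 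Finally $D \neq 0$ because $H_1(S^0;\mathbb{Z})=0$ forces $\pm K_S$ to be ample (as recalled in the Introduction), whence $K_S^2 > 0$.

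The only real obstacle, albeit a mild one, lies in the first step: setting up the topology via Mayer--Vietoris to obtain $H_1(S';\mathbb{Z})=0$, and invoking the lattice identity $[\Lambda : A \oplus A^\perp]^2 = |\det(A)\cdot\det(A^\perp)|$ for orthogonal sublattices $A$ and $A^\perp$ of a unimodular lattice $\Lambda$ of full rank. Once these standard ingredients are in place, the conclusion follows by direct substitution.
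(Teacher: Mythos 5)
Your proof is correct and follows exactly the strategy the paper indicates (``which can be demonstrated by considering the minimal resolution of $S$''): pass to $S'$, use $H_1(S^0;\mathbb{Z})=0$ to get a unimodular $H_2(S';\mathbb{Z})$, decompose orthogonally into the exceptional lattice $E$ of determinant $\pm p_1p_2p_3p_4$ and a rank-one complement $\mathbb{Z}F$, apply the index--determinant identity, and place $\pi^*K_S$ in $E^\perp\otimes\mathbb{Q}$ via the projection formula. All the individual steps (Mayer--Vietoris, the determinant of a Hirzebruch--Jung plumbing, signature bookkeeping, the integrality of $D$ from Equation \eqref{eq:KS_square}, and positivity from ampleness of $\pm K_S$) check out.
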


In this paper, we refer to this condition as the \emph{perfect squareness of $D$} for a singularity of type 
$\{(p_1,q_1),\dots,(p_4,q_4)\}$.

\subsection{Topological Obstructions}
Let $p_1,p_2,p_3,p_4>1$ be pairwise relatively prime integers, and let $q_1,\dots,q_4$ be integers such that $0<q_i<p_i$ and $(p_i,q_i)=1$. Define the integer $q$ as $q:=q_1p_2p_3p_4+p_1q_2p_3p_4+p_1p_2q_3p_4+p_1p_2p_3q_4$. Observe that \[
    \frac{q_1}{p_1}+\frac{q_2}{p_2}+\frac{q_3}{p_3}+\frac{q_4}{p_4}=\frac{q}{p_1p_2p_3p_4}. \]
The following proposition arises from considering a relation between the intersection form of a 4-manifold and the linking form of its boundary 3-manifold.
\begin{proposition}[The linking form condition]\label{prop:linking_form} If $L(p_1,q_1)\# \cdots \# L(p_4,q_4)$ bounds a compact oriented topological 4-manifold $M$ with $H_1(M;\mathbb{Z})=0$ and $b_2(M)=b_2^+(M)=1$, then $-q$ is a quadratic residue modulo $p_1p_2p_3p_4$. \end{proposition}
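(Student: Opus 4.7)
The plan is to pin down the linking form $\lambda\colon H_1(\partial M;\mathbb{Z})\times H_1(\partial M;\mathbb{Z})\to\mathbb{Q}/\mathbb{Z}$ in two different ways and then read the quadratic-residue identity off the resulting comparison. First, using $H_1(M;\mathbb{Z})=0$ together with $H_2(\partial M;\mathbb{Z})=0$ (which holds since $\partial M$ is a connected sum of lens spaces, hence a rational homology 3-sphere) and Poincar\'e--Lefschetz duality, the long exact sequence of the pair $(M,\partial M)$ collapses to
\begin{equation*}
0\longrightarrow H_2(M;\mathbb{Z})\xrightarrow{\;Q_M\;} H^2(M;\mathbb{Z})\longrightarrow H_1(\partial M;\mathbb{Z})\longrightarrow 0,
\end{equation*}
in which the first map is represented by the intersection form. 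The hypotheses $b_2(M)=b_2^+(M)=1$ force $Q_M$ to be positive-definite of rank one, so $Q_M\cong\langle d\rangle$ for a unique $d>0$, and hence $|H_1(\partial M;\mathbb{Z})|=d$. Since the $p_i$ are pairwise coprime, the Chinese remainder theorem gives $H_1(\partial M;\mathbb{Z})\cong \bigoplus_i \mathbb{Z}/p_i\cong \mathbb{Z}/P$ with $P:=p_1p_2p_3p_4$, so $d=P$.

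For the first description of $\lambda$: the discriminant-form construction identifies the linking form on the cokernel in the sequence above with $-Q_M^{-1}\bmod\mathbb{Z}$, so on the generator $g_M$ of $\mathbb{Z}/P$ coming from $H^2(M;\mathbb{Z})$ one has $\lambda(g_M,g_M)\equiv -1/P\pmod{\mathbb{Z}}$. For the second description, linking forms are additive under connected sum, and a generator $\gamma_i$ of $H_1(L(p_i,q_i);\mathbb{Z})$ has self-linking $q_i/p_i\pmod{\mathbb{Z}}$ under the paper's surgery orientation convention. Under the Chinese remainder isomorphism, the element $\gamma=(\gamma_1,\gamma_2,\gamma_3,\gamma_4)$ of $\bigoplus_i \mathbb{Z}/p_i$ is another generator of $\mathbb{Z}/P$, and its self-linking equals $\sum_i q_i/p_i= q/P\pmod{\mathbb{Z}}$.

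Any two generators of $\mathbb{Z}/P$ differ by multiplication by a unit $n\in(\mathbb{Z}/P)^{\times}$, under which self-linkings scale by $n^2$. Equating the two values of $\lambda$ therefore yields $q/P\equiv -n^2/P\pmod{\mathbb{Z}}$ for some $n$ coprime to $P$, i.e.\ $-q\equiv n^2\pmod{P}$, which is precisely the assertion that $-q$ is a quadratic residue modulo $P$. The principal bookkeeping obstacle is the sign tracking: one must reconcile the paper's surgery convention $L(p,q)=-(p/q)$-surgery on the unknot (which fixes the sign $+q_i/p_i$ in the direct-sum computation) with the sign in the formula $\lambda\equiv -Q_M^{-1}\pmod{\mathbb{Z}}$, and verify that both computations are performed relative to the same orientation of $\partial M=L(p_1,q_1)\#\cdots\# L(p_4,q_4)$. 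Once this orientation matching is in place, the quadratic-residue identity is immediate from the two descriptions of $\lambda$ on the cyclic group $\mathbb{Z}/P$.
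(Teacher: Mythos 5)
Your proof is correct and takes essentially the same approach as the paper: both compute the linking form of the connected sum via additivity and the CRT generator $(1,1,1,1)$, equate it with $(-1/p_1p_2p_3p_4)$ coming from the rank-one positive-definite intersection form of $M$, and conclude via the fact that generators of a cyclic group differ by a unit whose square rescales the form. The only distinction is that you re-derive the paper's Lemma~\ref{lem:linking_form}(1) from the long exact sequence and Poincar\'e--Lefschetz duality rather than citing it.
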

It follows that if a singularity type $\{(p_1,q_1),\dots, (p_4,q_4)\}$ is realized by a rational homology $\mathbb{CP}^2$ whose smooth locus has trivial first integral homology group, then $-q$ must be a quadratic residue modulo $p_1p_2p_3p_4$. We refer to this condition as the \emph{linking form condition} for $\{(p_1,q_1),\dots, (p_4,q_4)\}$.

\begin{definition}[Linking form] Let $Y$ be an oriented 3-manifold and let $TH_1(Y;\Bbb Z)$ denote the torsion subgroup of $H_1(Y;\Bbb Z)$. For rationally nullhomologous disjoint oriented two knots $K_1, K_2$ (i.e. $[K_i]=0\in H_1(Y;\Bbb Q)$), so that $[K_i]\in TH_1(Y;\mathbb{Z})$) in $Y$, choose a rational 2-chain $c$ with $\partial c=K_1$ and define \[ L_Y([K_1],[K_2])= c \cdot K_2. \]
It can be shown that this gives a well-defined $\Bbb Q/\Bbb Z$-valued symmetric bilinear form on $TH_1(Y;\Bbb Z)$ which is nondegenerate if $Y$ is closed, see \cite[Exercise 4.5.12(c)]{Gompf-Stipsicz-1999} 
for example. 
\end{definition}

The following lemma is well-known, see \cite[Exercise 5.3.13(f),(g)]{Gompf-Stipsicz-1999} for example.

\begin{lemma}\label{lem:linking_form} Let $Y$ be a rational homology 3-sphere. \begin{enumerate}[label=\textup{(\arabic*)}]
    \item Suppose $M$ is a compact, oriented 4-manifold with $H_1(M;\Bbb Z)=0$ and $\partial M=Y$. If $A$ is any matrix for the intersection form of $M$, then $H_1(Y;\Bbb Z)$ is isomorphic to the cokernel of $A:\Bbb Z^{b_2(M)}\to \Bbb Z^{b_2(M)}$, and $(-A)^{-1}$ represents the linking form on $H_1(Y;\Bbb Z)$. 
    \item If $Y$ is obtained from $S^3$ by Dehn surgery on an oriented, framed link $L\subset S^3$ with linking matrix $B$, then $(-B)^{-1}$ represents the linking form on $H_1(Y;\Bbb Z)$ with respect to the generating set given by meridians.
\end{enumerate}
\end{lemma}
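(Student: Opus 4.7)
My plan is to derive both parts from the long exact sequence of the pair $(M,\partial M)$ combined with Poincar\'e-Lefschetz duality. For part (1), I would start from the exact sequence
\[
H_2(M;\Bbb Z)\xrightarrow{i_*} H_2(M,\partial M;\Bbb Z)\xrightarrow{\partial} H_1(\partial M;\Bbb Z)\to H_1(M;\Bbb Z)=0,
\]
which identifies $H_1(Y)$ with $\operatorname{coker}(i_*)$. Poincar\'e-Lefschetz duality together with the universal coefficient theorem --- whose $\operatorname{Ext}$ term vanishes thanks to $H_1(M;\Bbb Z)=0$ --- gives $H_2(M,\partial M)\cong H^2(M)\cong \operatorname{Hom}(H_2(M),\Bbb Z)$, and under this identification $i_*$ becomes the map adjoint to the intersection form. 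Fixing a basis of $H_2(M)$ with its dual basis, the matrix of $i_*$ is exactly $A$, whence $H_1(Y)\cong \operatorname{coker}(A)$.

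For the linking-form assertion, I would represent classes $[K_j]\in H_1(Y)$ by relative $2$-cycles $\Sigma_j\subset M$ with $\partial\Sigma_j=K_j$ and let $\alpha_j\in\Bbb Z^n$ be the coordinate vector of $[\Sigma_j]\in H_2(M,\partial M)$ in the dual basis above. Since $Y$ is a rational homology sphere, $[K_1]$ is torsion, so there exists $\gamma\in\Bbb Q^n$ with $A\gamma=\alpha_1$; geometrically, $\Sigma_1$ is rationally homologous in $M$ to a closed rational $2$-cycle in the interior, and the difference supplies a rational $2$-chain $c$ in $\partial M=Y$ with $\partial c=K_1$. Computing the intersection $c\cdot K_2$ inside $Y$ and translating back through the duality pairing $H_2(M,\partial M)\times H_2(M)\to\Bbb Z$, I expect to arrive at
\[
L_Y([K_1],[K_2])\;\equiv\; -\alpha_1^{T}A^{-1}\alpha_2 \pmod{\Bbb Z},
\]
which is precisely the statement that $(-A)^{-1}$ represents the linking form.

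Part (2) will then follow by specializing (1) to the $4$-manifold $M$ built by attaching $2$-handles to $D^4$ along the framed link $L\subset S^3$. This $M$ satisfies $\partial M=Y$ and $H_1(M;\Bbb Z)=0$, its intersection form relative to the natural basis of $H_2(M)$ (the cores of the $2$-handles, capped off by Seifert surfaces of the link components) is exactly the linking matrix $B$, and the induced generators of $\operatorname{coker}(B)\cong H_1(Y;\Bbb Z)$ are precisely the meridians of $L$. Plugging $A=B$ into part (1) then yields the claim.

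The main technical obstacle I anticipate is pinning down orientation and sign conventions, specifically the minus sign in $(-A)^{-1}$. This depends on how $\partial M$ is oriented relative to $M$ and on the sign with which Poincar\'e-Lefschetz duality intertwines the relative intersection pairing on $M$ with the $\Bbb Q/\Bbb Z$-valued linking pairing on $\partial M$; different textbook conventions can flip this sign. The cleanest way to nail it down is to fix all conventions up front and verify them against the simplest lens space example $L(p,1)$, which bounds the $4$-manifold with intersection matrix $(-p)$, thereby confirming the correct sign of $(-A)^{-1}$ in the final answer.
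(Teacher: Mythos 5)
The paper does not prove this lemma --- it is cited as a standard fact from Gompf--Stipsicz (Exercise~5.3.13(f),(g)) --- and your sketch correctly reproduces the usual textbook argument: the long exact sequence of the pair together with Poincar\'e--Lefschetz duality and the universal coefficient theorem for part~(1), then specialization to the $2$-handlebody for part~(2). Your overall plan is sound and is the argument the paper's citation points to.

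Two small points worth attending to when you write this out. First, $H_2(M;\Bbb Z)$ need not be torsion-free even when $H_1(M;\Bbb Z)=0$, so ``a basis of $H_2(M)$'' should mean a basis of $H_2(M)/\mathrm{Tor}$; this is harmless, since $i_*$ lands in the free group $H_2(M,\partial M)\cong\operatorname{Hom}(H_2(M),\Bbb Z)$ and therefore factors through $H_2(M)/\mathrm{Tor}$, leaving the cokernel unchanged. Second, the passage from the geometric description of the linking form to the algebraic formula $L_Y([K_1],[K_2])\equiv-\alpha_1^{T}A^{-1}\alpha_2\pmod{\Bbb Z}$ is stated only as an expectation; this is the real computational content of the lemma and needs to be carried through, tracking that the relative intersection number $\Sigma_1\cdot\Sigma_2$ is an integer (which produces the $\bmod\,\Bbb Z$ reduction) and where exactly the sign enters from the ``outward normal last'' convention for $\partial M$. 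Your plan to pin the sign by checking $L(p,1)$ against a single $(-p)$-framed $2$-handle on $D^4$ is precisely the right sanity check and resolves the ambiguity.
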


\begin{proof}[Proof of Proposition \ref{prop:linking_form}]
    Recall that the lens space $L(r,s)$ is obtained by $-(r/s)$-surgery along the unknot in $S^3$. Therefore, the linking form on $H_1(L(r,s);\Bbb Z)=\Bbb Z_r$ is represented by $\displaystyle\left(\frac{s}{r}\right)$, as indicated in Lemma \ref{lem:linking_form} (2). Consequently, for $Y=L(p_1,q_1)\# \cdots \# L(p_4,q_4)$, the linking form on $H_1(Y;\Bbb Z)=\Bbb Z_{p_1}\oplus \cdots \oplus \Bbb Z_{p_4}$ is represented by $\left(\frac{q_1}{p_1}\right)\oplus \cdots \oplus \left(\frac{q_4}{p_4}\right)$. Note that $\left(\frac{q_1}{p_1}\right)\oplus \cdots  \oplus \left(\frac{q_4}{p_4}\right)$ corresponds to $\left(\frac{q}{p_1p_2p_3p_4}\right)$ under the isomorphism $\Bbb Z_{p_1}\oplus \cdots \oplus \Bbb Z_{p_4}\cong \Bbb Z_{p_1p_2p_3p_4}$, mapping $(1,1,1,1)$ to $1$.

    On the other hand, given that $b_2(M)=b_2^+(M)=1$, the intersection form of $M$ is represented by a matrix $(p)$ for some positive integer $p$. According to Lemma \ref{lem:linking_form} (1), $H_1(Y;\Bbb Z)$ is isomorphic to $\Bbb Z_p$ and the linking form on $H_1(Y;\Bbb Z)=\Bbb Z_p$ is represented by $\left(-\frac{1}{p}\right)$. Consequently, this implies that $p=p_1p_2p_3p_4$ and that the form $\left(\frac{q}{p_1p_2p_3p_4}\right)$ is isomorphic to $\left(-\frac{1}{p}\right)$.
    Since symmetric bilinear forms on $\Bbb Z_p$ isomorphic to $\left(-\frac{1}{p}\right)$ are precisely those of the form $\left(-\frac{a^2}{p}\right)$ with $(a,p)=1$, it follows that $q\equiv-a^2$ mod $p$ for some integer $a$. 
\end{proof}

\begin{remark} Suppose that $-q$ is a quadratic residue modulo $p_1p_2p_3p_4$, and hence the linking form of $Y=L(p_1,q_1)\#\cdots \# L(p_4,q_4)$ can be represented by $\left(-\frac{1}{p_1p_2p_3p_4}\right)$. Then it is known that $Y$ bounds a compact oriented \emph{topological} 4-manifold $M$ with $H_1(M;\Bbb Z)=0$ and $b_2(M)=b_2^+(M)=1$ \cite{Boyer-1986}. 
Therefore, the converse of Proposition \ref{prop:linking_form} is also valid. 
\end{remark}

The corollary below follows directly from the linking form condition, which imposes a strict restriction on the order of the fourth singularity modulo $30$ for a singularity of type $(2,3,5,n)$.

\begin{corollary}
    Suppose that a singularity of type $\{(2,1),(3,2),(5,q_3),(p_4,q_4)\}$ with $(p_4,30)=1$ is realized by a rational homology $\mathbb{CP}^2$ with simply-connected smooth locus. If  $q_3=1$ or $4$, then \[p_4\equiv 1 \text{ or } 19\mod 30.\]
    If $q_3=2$, then \[p_4\equiv 7\text{ or } 13\mod 30.\]
\end{corollary}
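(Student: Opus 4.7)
The plan is to apply the linking form condition (Proposition \ref{prop:linking_form}) directly and extract congruence information on $p_4$ modulo $30$ by reducing modulo $2$, $3$, and $5$ separately. For the singularity type $\{(2,1),(3,2),(5,q_3),(p_4,q_4)\}$, the integer from Proposition \ref{prop:linking_form} is
\[
q = 1\cdot 3\cdot 5\cdot p_4 + 2\cdot 2\cdot 5\cdot p_4 + 2\cdot 3\cdot q_3\cdot p_4 + 2\cdot 3\cdot 5\cdot q_4 = (35+6q_3)p_4 + 30 q_4,
\]
and the hypothesis that the smooth locus is simply-connected gives $H_1(S^0;\mathbb{Z})=0$, hence $-q$ must be a quadratic residue modulo $p_1p_2p_3p_4 = 30\,p_4$.

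Since $2,3,5,p_4$ are pairwise coprime (as $(p_4,30)=1$), the Chinese Remainder Theorem reduces the QR condition to the separate conditions modulo $2,3,5,p_4$. First I would note that the mod $2$ condition is automatic. Modulo $3$, the equality $-q \equiv p_4 \pmod 3$ forces $p_4\equiv 1\pmod 3$, since the only nonzero QR modulo $3$ is $1$. Modulo $5$, $-q \equiv -q_3 p_4 \pmod 5$, and the nonzero QRs mod $5$ are $\{1,4\}$. Running through the two cases: when $q_3\in\{1,4\}$ we have $-q_3p_4 \equiv \mp p_4$, so in both cases $p_4$ itself must be a QR mod $5$, giving $p_4\equiv 1\text{ or }4\pmod 5$; when $q_3=2$, we instead need $-2p_4\in\{1,4\}\pmod 5$, which gives $p_4\equiv 2\text{ or }3\pmod 5$.

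The remaining step is to combine these with $(p_4,30)=1$ (which forces $p_4\equiv 1\pmod 2$) via CRT. For $q_3\in\{1,4\}$, the pairs $(1\bmod 3,\ 1\bmod 5)$ and $(1\bmod 3,\ 4\bmod 5)$ yield $p_4\equiv 1$ and $p_4\equiv 19\pmod{30}$, respectively. For $q_3=2$, the pairs $(1\bmod 3,\ 2\bmod 5)$ and $(1\bmod 3,\ 3\bmod 5)$ yield $p_4\equiv 7$ and $p_4\equiv 13\pmod{30}$, respectively, giving both conclusions of the corollary.

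There is no real obstacle: the argument is a direct CRT computation from the already-established linking form condition. I note that the mod $p_4$ part of the QR condition translates into the requirement that $-30q_4$ be a QR modulo $p_4$, but this only constrains $q_4$ (given $p_4$) and plays no role in the congruences claimed for $p_4$ modulo $30$, so it can be safely ignored for this corollary.
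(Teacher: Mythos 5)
Your proof is correct and follows essentially the same approach as the paper: compute $q = (35+6q_3)p_4 + 30q_4$ (the paper writes out only the $q_3=1$ case, where $q = 41p_4+30q_4$, and states the others are analogous), reduce the quadratic-residue requirement on $-q$ mod $3$ and mod $5$ via CRT, and combine with the parity constraint. Your presentation is a bit more systematic in keeping $q_3$ generic throughout and explicitly noting that the mod $2$ and mod $p_4$ pieces of the CRT decomposition impose no constraint on $p_4 \bmod 30$, but there is no substantive difference.
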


\begin{proof} Suppose that a singularity of type $\{(2,1),(3,2),(5,1),(p_4,q_4)\}$ with $(p_4,30)=1$ can be realized by a simply-connected rational homology $\mathbb{CP}^2$. In this case, we have $-q=-(41p_4+30q_4)\equiv 19p_4-30q_4\mod 30p_4$. When reduced modulo $3$, we deduce that $p_4$ must be a quadratic residue modulo $3$. Therefore, $p_4\equiv 1\mod 3$. Similarly, reducing modulo $5$ yields $p_4\equiv 1\text{ or } 4\mod 5$. Consequently, it follows that $p_4\equiv 1\text{ or }19\mod 30$.

Applying the same argument, if a singularity of type $\{(2,1),(3,2),(5,2),(p_4,q_4)\}$ is realized by a simply-connected rational homology $\mathbb{CP}^2$, then $p_4 \equiv 7 \text{ or } 13 \mod 30$. Similarly, for a singularity of type $\{(2,1),(3,2),(5,4),(p_4,q_4)\}$, we have $p_4 \equiv 1 \text{ or } 19 \mod 30$.
\end{proof}

\subsubsection{The linking form condition and the lattice-theoretic argument for the minimal resolution}

In this subsection, we investigate a relation between the linking form condition (Proposition \ref{prop:linking_form}) and the lattice-theoretic argument presented in \cite[Section 6]{Hwang-Keum-2011-2} (also noted in \cite[Remark 5.1]{Hwang-Keum-2013}). Interestingly, we find that the linking form condition is actually more stringent than the lattice argument in \cite[Section 6]{Hwang-Keum-2011-2}. Please note that the content of this subsection will not be referenced further in the remainder of this paper.

Suppose $S$ is a rational homology $\mathbb{CP}^2$ with four cyclic singularities of type $\{(p_1,q_1), \dots, (p_4,q_4)\}$. Let $W$ be the closed oriented 4-manifold constructed in Section \ref{subsec:computation_of_K_S^2}, which is homeomorphic to the minimal resolution $S'$ of $S$. Then the intersection form $Q_W$ of $W$ is unimodular and indefinite. We have $b_2^+(W)=1$ and $b_2^-(W)=L$, where $L=\ell_1+\cdots+\ell_4$ is as in Section \ref{subsec:BMY}. (Note that $L$ is determined by the $p_i$'s and the $q_i$'s.) According to the algebraic classification of unimodular indefinite lattices (see \cite{Milnor_Husemoller}, for example), the signature of an even unimodular lattice is divisible by 8, and \[
Q_W \cong \begin{cases}
    \langle 1\rangle \oplus L \langle -1\rangle, & \textrm{if $Q_W$ is odd}, \\
    H\oplus \frac{L-1}{8} E_8, &\textrm{if $Q_W$ is even},
\end{cases}
\]
where $H$ and $E_8$ are the unimodular lattices represented by the matrices \[
\begin{bmatrix}
     0& 1 \\ 1 &0 
\end{bmatrix}~~\textrm{and}~~\begin{bmatrix}
    -2 & 1 & 0&0 & 0& 0& 0&0 \\
    1 & -2 & 1 & 0& 0&0 &0 &0 \\
   0  & 1 & -2 & 1 &0 & 0& 0&0 \\
    0 & 0& 1 & -2 & 1 & 0& 0& 0\\
     0& 0&0 & 1 & -2 & 1 & 0& 1 \\
     0& 0&0 &0 & 1 & -2 & 1 & 0\\
    0 & 0&0 & 0& 0& 1 & -2 & 0\\
    0 & 0& 0& 0& 1 &0 & 0& -2    
\end{bmatrix},
\]
respectively.

Now, consider a submanifold $X:=X(p_1,p_1-q_1) \amalg \cdots \amalg X(p_4,p_4-q_4)$ in $W$. Since $H_2(L(p_i,q_i);\mathbb{Z})$ is trivial for all $i$, the intersection lattice of $X$, $Q_X\cong\bigoplus_{i=1}^4 Q_{X(p_i,p_i-q_i)}$, \emph{embeds into} $Q_W$, i.e. there exists an injective map from $Q_X$ to $Q_W$ that preserves the bilinear form. In summary, we have the following: 

\begin{proposition}\label{prop:lattice-theoretic} 
If a singularity of type $\{(p_1,q_1),\dots, (p_4,q_4)\}$ is realized by a rational homology $\mathbb{CP}^2$, then the direct sum 
\[
    \bigoplus_{i=1}^4 Q_{X(p_i,p_i-q_i)}
\]
of linear lattices embeds into either $\langle 1\rangle \oplus L \langle -1\rangle$ or $H\oplus \frac{L-1}{8} E_8$.
\end{proposition}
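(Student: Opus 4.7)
The plan is to construct a closed smooth 4-manifold $W$ whose intersection form $Q_W$ can be analyzed in two complementary ways: as an abstract indefinite unimodular lattice, to which the classification theorem applies, and as a lattice containing $Q_X := \bigoplus_{i=1}^{4} Q_{X(p_i, p_i - q_i)}$ via a geometric embedding.

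First, set $M = S \setminus \bigsqcup_{i=1}^{4} \operatorname{int} N_i$, where each $N_i$ is an open cone neighborhood of the $i$-th singular point, and glue $X := \coprod_{i=1}^{4} X(p_i, p_i - q_i)$ to $M$ along the common boundary $\coprod_i L(p_i, q_i)$ to obtain $W$. By the standard plumbing description of the minimal resolution of a cyclic quotient singularity, $W$ is homeomorphic to the minimal resolution $S'$. Since $X$ is negative definite of total rank $L = \ell_1 + \cdots + \ell_4$ and $b_2^+(M) = 1$, a Mayer--Vietoris computation (using that each $L(p_i, q_i)$ is a rational homology sphere) yields $b_2^+(W) = 1$ and $b_2^-(W) = L$. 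Thus $Q_W$ is an indefinite unimodular lattice of signature $(1, L)$, and the Hasse--Milnor classification forces $Q_W \cong \langle 1 \rangle \oplus L \langle -1 \rangle$ in the odd case and $Q_W \cong H \oplus \frac{L-1}{8} E_8$ in the even case.

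Second, I would produce the lattice embedding $Q_X \hookrightarrow Q_W$. The inclusion $X \hookrightarrow W$ is codimension zero, so it automatically preserves intersection numbers on $H_2$. Injectivity of the induced map $H_2(X;\mathbb{Z}) \to H_2(W;\mathbb{Z})$ follows from the Mayer--Vietoris sequence of the decomposition $W = X \cup_{\partial X} M$: the preceding term $H_2(\partial X) = \bigoplus_i H_2(L(p_i, q_i))$ vanishes, so $H_2(X;\mathbb{Z}) \oplus H_2(M;\mathbb{Z}) \to H_2(W;\mathbb{Z})$ is injective. Combining this isometric embedding with the classification of $Q_W$ yields the proposition.

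The only subtle point will be the matrix-level interpretation of the classification, namely ensuring torsion-freeness of $H_2(W;\mathbb{Z})$ (equivalently, of $H_1(W;\mathbb{Z})$) so that $Q_W$ really takes one of the two stated integral forms; this can be handled by a further Mayer--Vietoris computation using $H_1(L(p_i, q_i);\mathbb{Z}) = \mathbb{Z}/p_i$ together with the pairwise coprimality of the $p_i$. I do not anticipate a substantial obstacle beyond this routine bookkeeping.
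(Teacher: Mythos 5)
Your proposal matches the paper's argument: build $W=M\cup_\partial \coprod_i X(p_i,p_i-q_i)$ (homeomorphic to the minimal resolution $S'$), identify $Q_W$ via the classification of indefinite unimodular lattices, and obtain the embedding $Q_X\hookrightarrow Q_W$ from Mayer--Vietoris using $H_2(L(p_i,q_i);\mathbb{Z})=0$. The only superfluous step is your worry about torsion-freeness of $H_2(W;\mathbb{Z})$: the intersection form on $H_2/\mathrm{Tor}$ of any closed oriented 4-manifold is unimodular by Poincar\'e duality regardless of torsion, and since $Q_X$ is negative definite (hence nondegenerate) the induced map into $H_2(W)/\mathrm{Tor}$ is automatically injective.
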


Thus, this can be used as an obstruction of realizing a rational homology $\mathbb{CP}^2$ with a given singularity type. The Local-Global principle and $\epsilon$-invariants can be used to check whether there is an embedding of Proposition \ref{prop:lattice-theoretic}(see \cite[Section 6]{Hwang-Keum-2011-2}).

On the other hand, if $-q$ (as defined above Proposition \ref{prop:linking_form}) is a quadratic residue modulo $p_1p_2p_3p_4$, then $L(p_1,q_1)\# \cdots \# L(p_4,q_4)$ bounds a compact oriented topological 4-manifold $M$ with $b_2(M)=b_2^+(M)=1$ as noted in the remark following the proof of Proposition \ref{prop:linking_form}. Then by attaching $\natural_{i=1}^4 X(p_i,p_i-q_i)$ to $M$, we can construct a closed 4-manifold $W'$, whose intersection form is unimodular, indefinite, and has the same $b_2^+$ and $b_2^-$ as those of $W$. Consequently, the lattice $\bigoplus_{i=1}^4 Q_{X(p_i,p_i-q_i)}$ embeds into $Q_{W'}$. This concludes that the obstruction from Proposition~\ref{prop:lattice-theoretic} above is less restrictive than the linking form condition.
\begin{proposition} If $-q$ is a quadratic residue modulo $p_1p_2p_3p_4$, then the direct sum \[
\bigoplus_{i=1}^4 Q_{X(p_i,p_i-q_i)}
\]
of linear lattices embeds into either $\langle 1\rangle \oplus L \langle -1\rangle$ or $H\oplus \frac{L-1}{8} E_8$.    
\end{proposition}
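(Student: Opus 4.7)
The plan is to follow the sketch given in the paragraph preceding the proposition, filling in the verification steps carefully. First, I would invoke the remark following Proposition~\ref{prop:linking_form}, based on \cite{Boyer-1986}: since $-q$ is a quadratic residue modulo $p_1p_2p_3p_4$, the 3-manifold $Y := L(p_1,q_1)\#\cdots\#L(p_4,q_4)$ bounds a compact oriented topological 4-manifold $M$ with $H_1(M;\mathbb{Z})=0$ and $b_2(M)=b_2^+(M)=1$. I would then form the smooth boundary connected sum $N := \natural_{i=1}^4 X(p_i,p_i-q_i)$, a negative definite 4-manifold of rank $L=\ell_1+\cdots+\ell_4$ whose boundary is $-Y$, and glue $M$ to $N$ along $Y$ to produce a closed oriented topological 4-manifold $W'$.

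Next, I would verify that the intersection form $Q_{W'}$ is unimodular and indefinite of signature $(1,L)$. Unimodularity is Poincar\'e duality for the closed oriented 4-manifold $W'$. For the signature and the sublattice embedding, I would run Mayer--Vietoris on the decomposition $W' = M \cup_Y N$: since $Y$ is a rational homology 3-sphere, one has $H_2(Y;\mathbb{Z})=0$ and $H_1(Y;\mathbb{Z})$ finite, so combined with $H_1(M)=H_1(N)=0$ the sequence forces $H_2(M)\oplus H_2(N) \to H_2(W')$ to be injective with finite cokernel. Because the inclusions preserve the intersection pairings, $Q_M\oplus Q_N$ sits inside $Q_{W'}$ as a finite-index sublattice; since $Q_M = \langle p\rangle$ for some positive $p$ and $Q_N = \bigoplus_i Q_{X(p_i,p_i-q_i)}$ is negative definite of rank $L$, the signature count $b_2^+(W')=1$, $b_2^-(W')=L$ follows.

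Third, I would apply the Milnor--Husemoller classification of unimodular indefinite integer lattices (already quoted in the excerpt) to conclude that
\[ Q_{W'} \;\cong\; \langle 1\rangle \oplus L\langle -1\rangle \quad \text{or}\quad Q_{W'} \;\cong\; H \oplus \frac{L-1}{8}E_8, \]
and then restrict the embedding $\bigoplus_{i=1}^4 Q_{X(p_i,p_i-q_i)} = Q_N \hookrightarrow Q_{W'}$ along this identification to obtain the stated conclusion.

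The only real subtlety I anticipate is the Mayer--Vietoris step: one must check that the inclusion-induced map $H_2(M)\oplus H_2(N)\to H_2(W')$ is indeed an isometric embedding of lattices (automatic, since the intersection pairing on $W'$ restricts to those on the two pieces) and that signatures are preserved when passing to a finite-index sublattice (standard, as signature is a rational invariant). No deeper topological difficulty appears, because the only non-elementary input -- the existence of the topological 4-manifold $M$ -- has already been supplied by Boyer's theorem.
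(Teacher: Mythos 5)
Your proposal is correct and follows exactly the approach the paper sketches in the paragraph preceding the proposition: invoke Boyer's theorem to realize a topological filling $M$ of $Y=L(p_1,q_1)\#\cdots\#L(p_4,q_4)$ with $H_1=0$, $b_2=b_2^+=1$, glue on $\natural_{i=1}^4 X(p_i,p_i-q_i)$, check the resulting closed $4$-manifold has unimodular indefinite intersection form of signature $(1,L)$, and apply the classification of indefinite unimodular lattices. Your Mayer--Vietoris verification that $Q_M\oplus Q_N$ is a finite-index sublattice of $Q_{W'}$, from which both the signature count and the isometric embedding follow, is precisely the detail the paper leaves implicit.
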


\subsection{Conditions from the theory of smooth 4-manifolds}
As highlighted in the Introduction, the algebraic Montgomery-Yang problem can be reformulated into a question concerning smooth 4-manifolds. In this subsection, we recall some conditions from smooth 4-manifolds theory that are applicable to this problem.

\subsubsection{Donaldson's diagonalization theorem}\label{subsec:Donaldson}
By Freedman \cite{Freedman-1982}, any symmetric unimodular bilinear form over integers can be the intersection form of a (simply-connected) closed, oriented topological 4-manifold. However, Donaldson's diagonalization theorem states that there is a significant constraint on the intersection form of closed, oriented, \emph{definite, smooth} 4-manifolds. We can utilize this constraint to provide a condition for a smooth 4-manifold whose boundary is a specified 3-manifold. The argument presented here is inspired by Lisca's work \cite{Lisca-2007}.

For a positive integer $n$, let $\{e_1,\dots,e_n\}$ be the standard basis for $\Bbb Z^n$. We denote by $-\Bbb Z^n$ the negative definite lattice $(\Bbb Z^n, \langle\cdot,\cdot\rangle)$ given by $\langle e_i,e_j\rangle = -\delta_{i,j}$, with $\delta_{i,j}$ being the Kronecker delta. Additionally, for a compact, oriented 4-manifold $X$ and its intersection form \[Q_X:H_2(X;\Bbb Z)/\textrm{Tor}\times H_2(X;\Bbb Z)/\textrm{Tor}\to \Bbb Z,\] we will simply denote the lattice $(H_2(X;\Bbb Z)/\textrm{Tor}, Q_X)$ by $Q_X$.

\begin{theorem}[Donaldson's diagonalization theorem, {\cite{Donaldson-1983,Donaldson-1987}}]\label{thm:Donaldson} If the intersection form $Q_W$ of a closed, oriented, smooth 4-manifold $W$ is negative definite, then $Q_W$ is isomorphic to $-\Bbb Z^n$, where $n=b_2(W)=b_2^-(W)$. 
\end{theorem}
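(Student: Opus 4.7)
The plan is to follow Donaldson's original gauge-theoretic strategy using the Yang--Mills instanton moduli space. Fix a Riemannian metric on $W$ and let $\mathcal{M}$ denote the moduli space of gauge-equivalence classes of anti-self-dual SU(2)-connections on the bundle $E \to W$ with $c_2(E) = 1$. A standard index computation, exploiting $b_2^+(W) = 0$, gives $\mathcal{M}$ virtual dimension $5$. Reducible ASD connections correspond to holomorphic splittings $E \cong L \oplus L^{-1}$ with $c_1(L)^2 = -c_2(E) = -1$ in $Q_W$; hence they are indexed by unordered pairs $\pm v \in H^2(W;\Bbb Z)/\mathrm{Tor}$ satisfying $Q_W(v,v) = -1$. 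Let $N$ denote the number of such pairs.

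Next I would analyze the singular strata of the Uhlenbeck compactification $\overline{\mathcal{M}}$. A local slice analysis identifies a neighborhood of each reducible with a cone on $\Bbb{CP}^2$, while for a generic metric the bubbling end of $\overline{\mathcal{M}}$ is collared by a copy of $W$. Removing small cone neighborhoods of the $N$ reducibles and an open collar of the bubbling end then yields a smooth compact oriented $5$-dimensional cobordism $V$ with $\partial V$ diffeomorphic to the disjoint union of $W$ and $N$ oriented copies of $\Bbb{CP}^2$, the orientations being determined by the reducibles.

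The main obstacle will be making this cobordism rigorous, as it requires essentially the full analytic apparatus of gauge theory: Uhlenbeck compactness and removable-singularities theorems, transversality via generic metric perturbations (so that $\mathcal{M}$ is smooth away from reducibles), orientability of $\mathcal{M}$ via Donaldson's determinant-line-bundle construction, and the precise local cone-on-$\Bbb{CP}^2$ model near each reducible. If one wishes to drop the simple-connectivity assumption implicit in the basic dimension count, additional work is needed to handle the extra $3 b_1(W)$ dimensions in the moduli space.

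Once the bordism $V$ is in hand, the remaining argument is topological and lattice-theoretic. A Poincar\'e--Lefschetz duality argument on $(V, \partial V)$, applied to the hyperplane generators of each $\Bbb{CP}^2$ summand, produces $N$ classes $e_1, \dots, e_N \in H_2(W;\Bbb Z)/\mathrm{Tor}$ satisfying $Q_W(e_i, e_j) = -\delta_{ij}$. These $N$ pairwise orthogonal vectors of square $-1$ in the negative-definite lattice $(H_2(W;\Bbb Z)/\mathrm{Tor}, Q_W)$ must span a sublattice of full rank (forcing $N = n$), and unimodularity of $Q_W$ then forces them to form a $\Bbb Z$-basis. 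This yields $Q_W \cong -\Bbb Z^n$, completing the proof.
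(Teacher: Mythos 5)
The paper does not prove this theorem; it is quoted verbatim as a black box from Donaldson's work \cite{Donaldson-1983,Donaldson-1987} and used downstream (Corollary 2.6, Proposition 3.8, etc.) as an obstruction. There is therefore no internal proof to compare against, and your sketch should be judged on its own as a summary of Donaldson's argument.

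As such a summary it is essentially faithful: the dimension-$5$ charge-one $\mathrm{SU}(2)$ ASD moduli space, reducibles indexed by pairs $\pm v$ with $v^2=-1$, the local cone-on-$\mathbb{CP}^2$ model, the Uhlenbeck collar diffeomorphic to $W\times(0,\epsilon]$, and the resulting oriented cobordism $V$ are all the right ingredients, and you are correct to flag compactness, transversality, orientability, and the non-simply-connected extension (this is exactly the jump from the 1983 to the 1987 paper) as the hard analytic input. The one place where the logic is stated backwards is the final step. Having the $N$ pairwise orthogonal square-$(-1)$ classes $e_1,\dots,e_N$ does not by itself \emph{force} them to span a full-rank sublattice; a priori one only knows $N\leq n$, since in a rank-$n$ negative definite lattice any two distinct square-$(-1)$ vectors not equal up to sign are orthogonal, so there are at most $n$ such pairs. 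The inequality $N\geq n$ is the genuinely new information supplied by the cobordism $V$: because $\partial V\cong W\sqcup\coprod_{i=1}^N(\pm\mathbb{CP}^2)$ and the oriented cobordism group $\Omega_4^{\mathrm{SO}}\cong\mathbb{Z}$ is detected by signature, one gets $\sigma(W)=\mp N$, i.e. $N=n$. Only then does your lattice argument close (the pairwise orthogonal square-$(-1)$ vectors generate a finite-index copy of $-\mathbb{Z}^n$, and unimodularity forces index one). So ``must span, forcing $N=n$'' should be replaced by ``$N=n$ by cobordism invariance of signature, hence they span.''
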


Now, suppose that $L(p_1,q_1)\# \cdots \# L(p_4,q_4)$ bounds an oriented smooth 4-manifold $M$ with $b_2(M)=b_2^+(M)=1$. Consider the boundary sum $X:=X(p_1,q_1)\natural \cdots \natural X(p_4,q_4)$ of the canonical negative definite plumbed 4-manifolds of the $L(p_i,q_i)$'s, which is a negative definite 4-manifold with $\partial X=L(p_1,q_1)\# \cdots \# L(p_4,q_4)$ and $b_2(X)=n:=\sum_{i=1}^4 b_2(X(p_i,q_i))$. By attaching $X$ to $-M$ along the boundary, we obtain a closed negative definite smooth 4-manifold $W: = X \cup_{\partial} (-M)$. Then, by applying Donaldson's diagonalization theorem to $W$, we find that $Q_W$ is isomorphic to $-\mathbb{Z}^{n+1}$, leading to the following corollary.




\begin{corollary}[The condition from Donaldson's diagonalization theorem]\label{cor:2.6} 
    If the connected sum $L(p_1,q_1)\# \cdots \# L(p_4,q_4)$ of lens spaces bounds a compact, oriented, smooth 4-manifold $M$ with $b_2(M)=b_2^+(M)=1$, then $\bigoplus_{i=1}^4 Q_{X(p_i,q_i)}$ embeds into $-\mathbb{Z}^{n+1}$, where $n=\sum_{i=1}^4 b_2(X(p_i,q_i))$. 
\end{corollary}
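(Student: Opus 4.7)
The plan is to build a closed, oriented, smooth, negative definite 4-manifold out of $M$ and the canonical negative definite plumbings, and then directly apply Donaldson's diagonalization theorem (Theorem~\ref{thm:Donaldson}) to produce the desired embedding.

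First I would form the boundary sum
\[
X := X(p_1,q_1)\natural X(p_2,q_2)\natural X(p_3,q_3)\natural X(p_4,q_4),
\]
which is a smooth, oriented, negative definite 4-manifold with $b_2(X)=n$ and $\partial X = L(p_1,q_1)\#\cdots\# L(p_4,q_4)$. Moreover, since the connect sum was taken in the boundary and $H_2$ of a boundary sum splits as a direct sum of the intersection lattices, we have the natural identification $Q_X \cong \bigoplus_{i=1}^4 Q_{X(p_i,q_i)}$.

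Next I would glue $X$ to $-M$ along the common boundary to obtain the closed, oriented, smooth 4-manifold
\[
W := X \cup_{\partial} (-M).
\]
Since $b_2(M)=b_2^+(M)=1$, the reversed manifold $-M$ has $b_2^+(-M)=0$ and $b_2^-(-M)=1$, so $-M$ is negative definite. Combined with the fact that $X$ is negative definite, $W$ is negative definite with $b_2(W)=n+1$. By Theorem~\ref{thm:Donaldson}, $Q_W \cong -\mathbb{Z}^{n+1}$.

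Finally I would verify that the inclusion $X\hookrightarrow W$ induces a lattice embedding $Q_X \hookrightarrow Q_W$. Since $\partial X = L(p_1,q_1)\#\cdots\# L(p_4,q_4)$ is a rational homology 3-sphere, the relevant piece of the Mayer--Vietoris sequence shows that $H_2(X;\mathbb{Z})/\mathrm{Tor} \to H_2(W;\mathbb{Z})/\mathrm{Tor}$ is injective, and the intersection form on $W$ restricts on classes supported in $X$ to the intersection form on $X$ (intersection numbers can be computed via transverse representatives lying inside $X$). Therefore $Q_X\cong \bigoplus_{i=1}^4 Q_{X(p_i,q_i)}$ embeds into $Q_W\cong -\mathbb{Z}^{n+1}$, which is exactly the claim. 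The only substantive input is Donaldson's theorem; the remaining steps are straightforward bookkeeping, and I anticipate no real obstacle beyond checking the sign conventions in the negative definiteness of $-M$ and the Mayer--Vietoris injectivity.
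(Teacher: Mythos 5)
Your proof is correct and follows essentially the same route as the paper: form the boundary sum $X$, glue it to $-M$ to obtain a closed negative definite $W$, apply Donaldson's diagonalization theorem to identify $Q_W$ with $-\mathbb{Z}^{n+1}$, and use the rational-homology-sphere boundary to conclude that the inclusion induces a lattice embedding $Q_X \hookrightarrow Q_W$. The paper leaves the final Mayer--Vietoris step implicit (stating it explicitly only in the earlier discussion around Proposition~\ref{prop:lattice-theoretic}), while you spell it out; otherwise the two arguments are the same.
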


Therefore, if we show that it is impossible to find an embedding from $\bigoplus_{i=1}^4 Q_{X(p_i,q_i)}$ into $-\mathbb{Z}^{n+1}$, 
we can conclude that a singularity of type $\{(p_1,q_1),\dots,(p_4,q_4)\}$ is not realized by a (simply-connected) rational homology $\mathbb{CP}^2$.

\subsubsection{The Kervaire-Milnor theorem and the orthogonal complement argument}
In this subsection, we explore more refined conditions applicable to certain types that are not obstructed by the criterion derived from Donaldson's diagonalization theorem, i.e., the cases where there is an embedding from $\bigoplus_{i=1}^4 Q_{X(p_i,q_i)}$ into $-\mathbb{Z}^{n+1}$.

\begin{definition}[Essentially unique embedding]\label{def:unique_embedding} Let $X$ be a negative definite plumbed 4-manifold corresponding to the weighted graph shown in Figure \ref{fig:essentially_unique_embedding} which represents the boundary sum of four linear plumbed 4-manifolds. 

\begin{figure}[!th]
\centering
\begin{tikzpicture}[scale=0.9]
\draw (-2.5,0) node[circle, fill, inner sep=1.2pt, black]{};
\draw (-1,0) node[circle, fill, inner sep=1.2pt, black]{};
\draw (1,0) node[circle, fill, inner sep=1.2pt, black]{};
\draw (2.5,0) node[circle, fill, inner sep=1.2pt, black]{};
\draw (-2.5,-1.5) node[circle, fill, inner sep=1.2pt, black]{};
\draw (-1,-1.5) node[circle, fill, inner sep=1.2pt, black]{};
\draw (1,-1.5) node[circle, fill, inner sep=1.2pt, black]{};
\draw (2.5,-1.5) node[circle, fill, inner sep=1.2pt, black]{};
\draw (-2.5,-3) node[circle, fill, inner sep=1.2pt, black]{};
\draw (-1,-3) node[circle, fill, inner sep=1.2pt, black]{};
\draw (1,-3) node[circle, fill, inner sep=1.2pt, black]{};
\draw (2.5,-3) node[circle, fill, inner sep=1.2pt, black]{};
\draw (-2.5,-4.5) node[circle, fill, inner sep=1.2pt, black]{};
\draw (-1,-4.5) node[circle, fill, inner sep=1.2pt, black]{};
\draw (1,-4.5) node[circle, fill, inner sep=1.2pt, black]{};
\draw (2.5,-4.5) node[circle, fill, inner sep=1.2pt, black]{};

\draw (-2.5,0) node[above]{$-a_{1,1}$};
\draw (-1,0) node[above]{$-a_{1,2}$};
\draw (1,0) node[above]{$-a_{1,n_1-1}$};
\draw (2.5,0) node[above]{$-a_{1,n_1}$};
\draw (-2.5,-1.5) node[above]{$-a_{2,1}$};
\draw (-1,-1.5) node[above]{$-a_{2,2}$};
\draw (1,-1.5) node[above]{$-a_{2,n_2-1}$};
\draw (2.5,-1.5) node[above]{$-a_{2,n_2}$};
\draw (-2.5,-3) node[above]{$-a_{3,1}$};
\draw (-1,-3) node[above]{$-a_{3,2}$};
\draw (1,-3) node[above]{$-a_{3,n_3-1}$};
\draw (2.5,-3) node[above]{$-a_{3,n_3}$};
\draw (-2.5,-4.5) node[above]{$-a_{4,1}$};
\draw (-1,-4.5) node[above]{$-a_{4,2}$};
\draw (1,-4.5) node[above]{$-a_{4,n_4-1}$};
\draw (2.5,-4.5) node[above]{$-a_{4,n_4}$};

\draw (-2.5,0) node[below]{$v_{1,1}$};
\draw (-1,0) node[below]{$v_{1,2}$};
\draw (1,0) node[below]{$v_{1,n_1-1}$};
\draw (2.5,0) node[below]{$v_{1,n_1}$};
\draw (-2.5,-1.5) node[below]{$v_{2,1}$};
\draw (-1,-1.5) node[below]{$v_{2,2}$};
\draw (1,-1.5) node[below]{$v_{2,n_2-1}$};
\draw (2.5,-1.5) node[below]{$v_{2,n_2}$};
\draw (-2.5,-3) node[below]{$v_{3,1}$};
\draw (-1,-3) node[below]{$v_{3,2}$};
\draw (1,-3) node[below]{$v_{3,n_3-1}$};
\draw (2.5,-3) node[below]{$v_{3,n_3}$};
\draw (-2.5,-4.5) node[below]{$v_{4,1}$};
\draw (-1,-4.5) node[below]{$v_{4,2}$};
\draw (1,-4.5) node[below]{$v_{4,n_4-1}$};
\draw (2.5,-4.5) node[below]{$v_{4,n_4}$};

\draw (0,0) node{$\cdots$};
\draw (0,-1.5) node{$\cdots$};
\draw (0,-3) node{$\cdots$};
\draw (0,-4.5) node{$\cdots$};

\draw (-2.5,0)--(-0.5,0) (0.5,0)--(2.5,0)  (-2.5,-1.5)--(-0.5,-1.5) (0.5,-1.5)--(2.5,-1.5) (-2.5,-3)--(-0.5,-3) (0.5,-3)--(2.5,-3) (-2.5,-4.5)--(-0.5,-4.5) (0.5,-4.5)--(2.5,-4.5)  ;
\end{tikzpicture}
\caption{The graph of the boundary sum of four linear plumbed 4-manifolds.}
\label{fig:essentially_unique_embedding}
\end{figure}
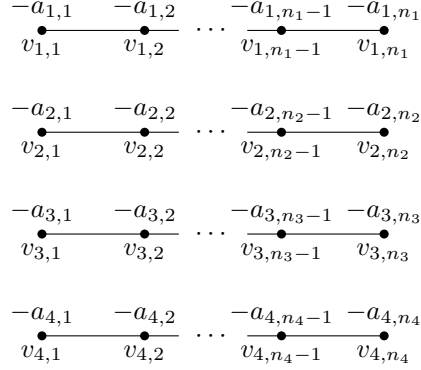

Suppose that we are given a lattice embedding $i:Q_X\hookrightarrow -\mathbb{Z}^n$ for some $n$. If $A:-\Bbb Z^n\to -\Bbb Z^n$ is a change of basis isomorphism (given by a matrix in $\textrm{O}(n)\cap \textrm{GL}(n,\Bbb Z)$), then $A\circ i$ is also an embedding $Q_X\hookrightarrow -\Bbb Z^n$. Moreover, for any subset $S$ of $\{1,2,3,4\}$, the map $i_S:Q_X\to -\Bbb Z^n$ defined by 
\[ 
    i_S(v_{j,k})=\begin{cases}
    -i(v_{j,k}) & \textrm{if $j\in S$}, \\ i(v_{j,k}) & \textrm{if $j\notin S$}, \end{cases} 
\]
is also an embedding. Define an equivalence relation $\sim$ on the set of all lattice embeddings $Q_X\hookrightarrow -\mathbb{Z}^n$ by declaring $i\sim A \circ i$ for any change of basis isomorphism $A$ and $i\sim i_S$ for any subset $S$ of $\{1,2,3,4\}$. We say that there is an \textit{essentially unique} embedding $Q_X\hookrightarrow -\mathbb{Z}^n$ if there is exactly one equivalence class of the set of all embeddings $Q_X\hookrightarrow -\mathbb{Z}^n$.
\end{definition}

For a compact oriented 4-manifold $X$, we define the \textit{signature}, $\sigma(X)$, of $X$ by $b_2^+(X)-b_2^-(X)$. The following is a classical result of Kervaire and Milnor, which is a generalization of Rokhlin's theorem.

\begin{theorem}[Kervaire-Milnor condition, \cite{KM-1961}]\label{thm:2.7} Let $X$ be a closed, oriented, smooth 4-manifold whose characteristic class can be represented by a smoothly embedded 2-sphere $\Sigma$ in $X$. Then
\[\sigma(X)\equiv[\Sigma]^2~\textup{mod}~16.\]
\end{theorem}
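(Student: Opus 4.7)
The plan is to reduce the congruence to Rokhlin's theorem — that $\sigma(W)\equiv 0\pmod{16}$ for every closed, oriented, smooth, spin $4$-manifold $W$ — by excising a tubular neighborhood of $\Sigma$ and capping off the resulting spin $4$-manifold with boundary. Set $n:=[\Sigma]^{2}$, let $N=\nu(\Sigma)$, and put $X_{0}:=X\setminus\mathrm{int}(N)$. Since $\Sigma\cong S^{2}$ is smoothly embedded, $N$ is the $D^{2}$-bundle over $S^{2}$ of Euler number $n$; its boundary $\partial N=\partial X_{0}$ is the lens space $L(|n|,1)$ (with orientation determined by the sign of $n$), or $S^{1}\times S^{2}$ when $n=0$. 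Novikov additivity then gives $\sigma(X)=\sigma(X_{0})+\sigma(N)$ with $\sigma(N)=\mathrm{sgn}(n)$.

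Next I would exploit the characteristic hypothesis to install a spin structure on $X_{0}$: since $\mathrm{PD}[\Sigma]\equiv w_{2}(X)\pmod 2$, every class in $H_{2}(X_{0};\mathbb{Z}_{2})$ has a representative disjoint from $\Sigma$ and so pairs trivially with $w_{2}(X)$, forcing $w_{2}(X_{0})=0$. Fix a spin structure $\mathfrak{s}$ on $X_{0}$, restrict it to $\mathfrak{s}|_{\partial}$ on $\partial X_{0}$, and choose any compact spin $4$-manifold $(B,\mathfrak{t})$ with $\partial(B,\mathfrak{t})=(\partial X_{0},\mathfrak{s}|_{\partial})$; existence of such a $B$ is guaranteed by $\Omega_{3}^{\mathrm{spin}}=0$. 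The closed manifold $W:=X_{0}\cup_{\partial}(-B)$ is then spin, so Rokhlin's theorem yields $\sigma(W)\equiv 0\pmod{16}$, and combining this with $\sigma(W)=\sigma(X_{0})-\sigma(B)$ and the signature decomposition above gives
\[
\sigma(X)\equiv\sigma(B)+\mathrm{sgn}(n)\pmod{16}.
\]
Thus the theorem reduces to computing the Rokhlin invariant $\mu(\partial X_{0},\mathfrak{s}|_{\partial}):=\sigma(B)\bmod 16$ and verifying $\mu+\mathrm{sgn}(n)\equiv n\pmod{16}$.

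This last Rokhlin-invariant calculation is the main obstacle, and the only genuinely non-formal point. For $n=0$, $(\partial X_{0},\mathfrak{s}|_{\partial})\cong(S^{1}\times S^{2},\mathfrak{s}_{0})$ bounds $S^{1}\times D^{3}$ with signature zero, so the formula holds immediately. For $n\neq 0$ one needs two pieces of bookkeeping: first, identifying $\mathfrak{s}|_{\partial}$ concretely on $L(|n|,1)$ — the spin structure extending across $N$ exists only for $n$ even, so for odd $n$ the boundary spin structure induced from $X_{0}$ is \emph{not} the ``obvious'' one on the disk bundle; and second, exhibiting an explicit spin filling whose signature is computable modulo $16$. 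A convenient choice is a linear plumbing: the Euler-number-$n$ disk bundle when $n$ is even, boundary-summed with an $E_{8}$- or $[-2,\ldots,-2]$-plumbing when $n$ is odd in order to correct the boundary spin structure. A direct calculation on the plumbing intersection form then yields $\mu\equiv n-\mathrm{sgn}(n)\pmod{16}$, and substituting into the displayed congruence produces $\sigma(X)\equiv n=[\Sigma]^{2}\pmod{16}$, as required.
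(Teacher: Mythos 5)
The paper gives no proof of this theorem --- it is cited as classical from Kervaire and Milnor --- so there is no in-paper proof to compare against. Your strategy (excise $\nu(\Sigma)$, observe that $X_0$ is spin, cap with a spin filling $B$, apply Rokhlin and Novikov additivity) is a legitimate modern route, and your setup is correct: $w_2(X_0)=0$ follows as you argue because the $\mathbb{Z}_2$-evaluation pairing is perfect, $\Omega_3^{\mathrm{spin}}=0$ supplies the filling, and you correctly reduce the theorem to the congruence $\mu(\partial X_0,\mathfrak{s}|_\partial)\equiv n-\mathrm{sgn}(n)\pmod{16}$.

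The gap is in the final Rokhlin-invariant computation, which you acknowledge is ``the main obstacle'' but only sketch --- and the sketch as written does not work. The fillings you propose are not fillings of $(\partial X_0,\mathfrak{s}|_\partial)$: the Euler-number-$n$ disk bundle has boundary $-\partial X_0$, not $\partial X_0$; and boundary-summing a disk bundle with an $E_8$- or $[-2,\dots,-2]$-plumbing changes the boundary $3$-manifold to a connected sum rather than $L(|n|,1)$, while for odd $n$ the disk bundle is not even spin, so neither is any boundary sum built from it. (For $n>0$ the filling one actually wants is the linear $[2,\dots,2]$-plumbing on $n-1$ vertices, which is spin with boundary $L(n,1)$ and signature $n-1$.) More fundamentally, for even nonzero $n$ the lens space $L(|n|,1)$ carries two spin structures, with Rokhlin invariants $n-\mathrm{sgn}(n)$ and $-\mathrm{sgn}(n)$ modulo $16$, and nothing in your argument pins down which one is $\mathfrak{s}|_\partial$. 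When $X$ is not spin one can argue that $\mathfrak{s}|_\partial$ is the one that does \emph{not} extend across $N$ (otherwise $X_0$ and $N$ would glue to a spin structure on $X$), and the computation goes through; but when $X$ is spin the natural $\mathfrak{s}$ restricts to the structure with $\mu=-\mathrm{sgn}(n)$, and the argument then only reproduces Rokhlin's $\sigma(X)\equiv 0\pmod{16}$, not the stronger $\sigma(X)\equiv n$. The standard way to sidestep all of this is to blow up first: replacing $(X,\Sigma)$ by $(X\#\overline{\mathbb{CP}^2},\,\Sigma\#\overline{\mathbb{CP}^1})$ decreases both $\sigma(X)$ and $[\Sigma]^2$ by one, reducing to $n=\pm 1$, where $\partial X_0=S^3$ and one caps with a $4$-ball. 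As written, the even-$n$ case is not handled.
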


\begin{example}\label{ex:Kervaire-Milnor} Suppose that $S$ is a rational homology $\mathbb{CP}^2$ with four cyclic singularities of type $\{(2,1), (3,2), (11,2), (13,1)\}$. Let $M$ be a compact oriented smooth 4-manifold obtained from $S$ by removing the cone neighborhoods of the singular points. Let $X:=X(2,1) \amalg X(3,2) \amalg X(11,2) \amalg X(13,1)$. Then $Z=X\cup_{\partial}(-M)$ is a closed, oriented, negative definite 4-manifold with $b_2(Z)=7$. By Donaldson's theorem, $Q_Z$ is isomorphic to $-\Bbb Z^7$ and there is an embedding $i:Q_X\hookrightarrow Q_Z=-\Bbb Z^7$. In fact, it is easy to check that there is an essentially unique embedding $i$, which is given in Figure \ref{fig:example_Kerviare-Milnor} (indicated $i(v)$ at each vertex for the generator $v\in H_2(X;\mathbb{Z})$ corresponding the the vertex). Let $\Sigma_1$, $\Sigma_2$ be the smoothly embedded base 2-spheres in $X$ (and hence in $Z$) corresponding to the red vertices, respectively. These spheres are disjoint, so tubing them yields a smoothly embedded 2-sphere $\Sigma \subset Z$ with the homology class: 
    \[[\Sigma]=[\Sigma_1]+[\Sigma_2]=i(v_1)+i(v_6)=3e_1+e_2-e_3-e_4-e_5+e_6+e_7.\]
In particular, $[\Sigma]$ is characteristic as the coefficients of the $e_i$'s are all odd. However, we observe that 
\[[\Sigma]^2=-15 \not\equiv -7=\sigma(Z) \mod 16,\]
contradicting Theorem \ref{thm:2.7}. Therefore, it is proven that such an $S$ cannot exist.

\begin{figure}[!th]
\centering
\begin{tikzpicture}[scale=0.8]
\draw (0,0) node[circle, fill, inner sep=1.2pt, red]{};
\draw (-1,-1.5) node[circle, fill, inner sep=1.2pt, black]{};
\draw (1,-1.5) node[circle, fill, inner sep=1.2pt, black]{};
\draw (-1.75,-3) node[circle, fill, inner sep=1.2pt, black]{};
\draw (1.75,-3) node[circle, fill, inner sep=1.2pt, black]{};
\draw (0,-4.5) node[circle, fill, inner sep=1.2pt, red]{};
\draw (0,0) node[above]{$-2$};
\draw (-1,-1.5) node[above]{$-2$};
\draw (1,-1.5) node[above]{$-2$};
\draw (-1.75,-3) node[above]{$-6$};
\draw (1.75,-3) node[above]{$-2$};
\draw (0,-4.5) node[above]{$-13$};

\draw (-1,-1.5)--(1,-1.5) (-1.75,-3)--(1.75,-3) ;

\draw (0,0) node[below]{$e_1-e_2$};
\draw (-1,-1.5) node[below]{$e_3-e_4$};
\draw (1,-1.5) node[below]{$e_4-e_5$};
\draw (-2,-3) node[below]{$e_1+e_2+e_3+e_4+e_5-e_6$};
\draw (1.75,-3) node[below]{$e_6-e_7$};
\draw (0,-4.5) node[below]{$2e_1+2e_2-e_3-e_4-e_5+e_6+e_7$};
\end{tikzpicture}
\caption{An essentially unique embedding of $Q_{X(2,1)}\oplus Q_{X(3,2)}\oplus Q_{X(11,2)}\oplus Q_{X(13,1)}$ into $-\mathbb{Z}^{7}$.}
\label{fig:example_Kerviare-Milnor}
\end{figure}
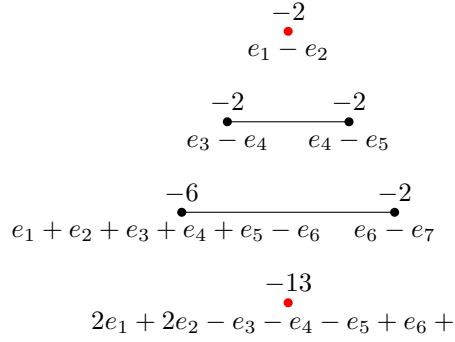
\end{example}

Next, we introduce an additional useful condition for the cases that are not obstructed by Donaldson's theorem. This condition leverages the condition that the first integral homology group of the smooth locus of our expected rational homology $\mathbb{CP}^2$ is trivial.

\begin{proposition}[{\cite[Lemma 2.4]{AMP-2022}}]\label{prop:2.8} Let $Y$ be an oriented rational homology 3-sphere which is the boundary of oriented compact 4-manifolds $X_1$ and $X_2$ with $H_1(X_1;\Bbb Z)=0$. If $X$ is a closed oriented 4-manifold obtained by $X:=X_1\cup_Y (-X_2)$, then the inclusions $X_1,-X_2\hookrightarrow X$ induce an embedding of lattices \[
  \iota:Q_{X_1}\oplus (-Q_{X_2}) \to Q_X
\]
such that $\iota(-Q_{X_2})$ is the orthogonal complement of $\iota(Q_{X_1})$ in $Q_X$.     
\end{proposition}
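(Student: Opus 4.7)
My plan is to combine Mayer-Vietoris with Poincar\'e-Lefschetz duality. Since $Y$ is a closed oriented rational homology $3$-sphere, Poincar\'e duality gives $H_2(Y;\mathbb{Z}) \cong H^1(Y;\mathbb{Z}) \cong \mathrm{Hom}(H_1(Y;\mathbb{Z}),\mathbb{Z}) = 0$, because $H_1(Y;\mathbb{Z})$ is finite. The Mayer-Vietoris sequence for $X = X_1 \cup_Y (-X_2)$ therefore yields an injection
\[
\phi: H_2(X_1) \oplus H_2(-X_2) \hookrightarrow H_2(X),
\]
whose cokernel embeds into $\ker(H_1(Y) \to H_1(X_1) \oplus H_1(-X_2))$, and in particular is finite (using $H_1(X_1) = 0$ and that $H_1(Y)$ is finite). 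Passing to torsion-free quotients gives an injective map $\iota: V \oplus W \to U$ with finite cokernel, where $V$, $W$, $U$ denote the torsion-free parts of $H_2(X_1)$, $H_2(-X_2)$, $H_2(X)$, respectively.

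Next I would verify that $\iota$ is a lattice embedding. Form preservation on each summand is standard: intersection numbers computed inside $X_i$ coincide with those computed in the ambient closed manifold $X$. The cross term $Q_X(\iota(v,0),\iota(0,w)) = 0$ holds because classes in $V$ and $W$ admit representatives in the interiors of $X_1$ and $-X_2$ respectively, hence disjoint in $X$. Thus $\iota$ provides an embedding $Q_{X_1} \oplus (-Q_{X_2}) \hookrightarrow Q_X$, and in particular $\iota(W) \subseteq \iota(V)^\perp$.

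For the reverse inclusion $\iota(V)^\perp \subseteq \iota(W)$, observe that $\iota(V)^\perp$ is the kernel of the map $U \to V^*$ sending $u \mapsto (v \mapsto Q_X(u, \iota(v,0)))$. Under Poincar\'e duality $H_2(X) \cong H^2(X)$, this map coincides with the restriction $i_1^*: H^2(X) \to H^2(X_1)$, where $i_1: X_1 \hookrightarrow X$. Here the hypothesis $H_1(X_1) = 0$ is essential, since it kills the $\mathrm{Ext}$ term in the Universal Coefficient Theorem and yields $H^2(X_1) \cong V^*$. The long exact sequence of the pair $(X, X_1)$ in cohomology identifies $\ker(i_1^*)$ with the image of $H^2(X, X_1) \to H^2(X)$. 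Excision gives $H^2(X, X_1) \cong H^2(-X_2, Y)$, Poincar\'e-Lefschetz duality for $(-X_2, Y)$ identifies this with $H_2(-X_2)$, and compatibility of the cap product with inclusions shows that the induced map $H_2(-X_2) \to H_2(X)$ is precisely the inclusion-induced one. Passing to torsion-free quotients yields $\iota(V)^\perp = \iota(W)$, establishing the claim.

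The main technical obstacle is verifying that the duality isomorphisms commute with the intersection pairings and that torsion contributions drop out consistently under the passage to $U$, $V$, and $W$. The hypothesis $H_1(X_1;\mathbb{Z}) = 0$ enters in two places: it controls $\mathrm{coker}(\phi)$ through the Mayer-Vietoris sequence, and, more importantly, it forces $H^2(X_1)$ to be torsion-free so that the lattice-theoretic orthogonal complement matches the cohomological kernel computed from the long exact sequence of the pair.
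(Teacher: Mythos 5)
The paper does not prove Proposition 2.8; it is quoted verbatim from \cite[Lemma 2.4]{AMP-2022}, so there is no internal proof to compare against. Your argument is correct and is the standard Mayer--Vietoris plus Poincar\'e--Lefschetz duality proof one would expect to find in the cited source.

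Two small remarks for precision. First, the injectivity of the induced map on torsion-free quotients is not automatic from injectivity on integral $H_2$ in general, but here it does follow: if $\phi$ is injective then $\phi^{-1}(\mathrm{Tor})=\mathrm{Tor}$, so the induced map $V\oplus W\to U$ is injective; it would be worth saying this explicitly rather than leaving it as ``passing to torsion-free quotients.'' Second, in your finiteness remark about $\operatorname{coker}(\phi)$, the hypothesis $H_1(X_1)=0$ plays no role (only finiteness of $H_1(Y)$ matters), and the finiteness of the cokernel is not actually used in the rest of the argument. Where $H_1(X_1;\Bbb Z)=0$ is genuinely essential is, as you correctly isolate, in forcing $H^2(X_1;\Bbb Z)\cong\operatorname{Hom}(H_2(X_1;\Bbb Z),\Bbb Z)$ to be torsion-free, which is what makes ``$i_1^*\mathrm{PD}(u)$ pairs to zero with all of $H_2(X_1)$'' equivalent to ``$i_1^*\mathrm{PD}(u)=0$'' and thereby converts the lattice-theoretic orthogonal complement into the cohomological kernel $\ker(i_1^*)$. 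The identification of $\ker(i_1^*)$, via excision and Lefschetz duality with naturality of cap products, with the image of $i_{2*}\colon H_2(-X_2)\to H_2(X)$ is exactly the right mechanism, and your handling of torsion at that step is sound since torsion classes in $H^2(X)$ die in the torsion-free group $H^2(X_1)$.
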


\begin{example}\label{ex:index} We can use the proposition above to demonstrate that a simply-connected rational homology $\mathbb{CP}^2$ with four cyclic singularities of type $\{(2,1),(3,2),(5,1),(9409,5519)\}$ does not exist. This particular type is not subject to obstruction by any other conditions discussed in this paper (see 
Example \ref{ex:9409}): 
Suppose $M$ is a simply-connected, smooth 4-manifold with $b_2(M)=b_2^+(M)=1$ and its boundary $\partial M$ is given by the disjoint union $L(2,1)\amalg L(3,2) \amalg L(5,1)\amalg L(9409, 5519)$. Define $X=X(2,1)\amalg X(3,2) \amalg X(5,1) \amalg X(9409,5519)$ and consider a closed negative definite 4-manifold $W:=X\cup_\partial (-M)$ with $b_2(W)=15$. Donaldson's diagonalization theorem implies that $Q_W$ is diagonalizable, and that there is an embedding $\iota:Q_X\oplus Q_{-M}\hookrightarrow Q_W\cong -\Bbb Z^{15}$. In particular, an essentially unique codimension one embedding $\iota|_{Q_X}:Q_X\hookrightarrow Q_W\cong -\Bbb Z^{15}$ can be observed, as indicated in Figure \ref{fig:embedding2}. 

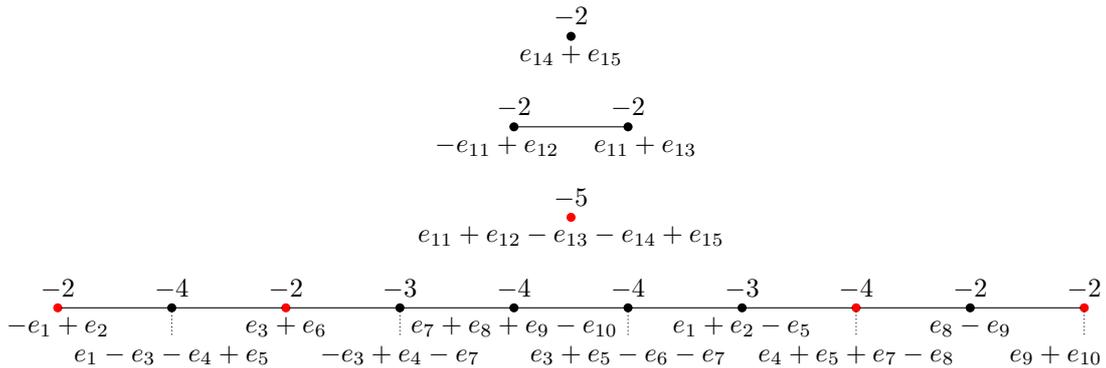
\begin{figure}[!th]
\centering
\begin{tikzpicture}[scale=0.75]
\draw (0,0) node[circle, fill, inner sep=1.2pt, black]{};
\draw (-1,-1.6) node[circle, fill, inner sep=1.2pt, black]{};
\draw (1,-1.6) node[circle, fill, inner sep=1.2pt, black]{};
\draw (0,-3.2) node[circle, fill, inner sep=1.2pt, red]{};
\draw (-9,-4.8) node[circle, fill, inner sep=1.2pt, red]{};
\draw (-7,-4.8) node[circle, fill, inner sep=1.2pt, black]{};
\draw (-5,-4.8) node[circle, fill, inner sep=1.2pt, red]{};
\draw (-3,-4.8) node[circle, fill, inner sep=1.2pt, black]{};
\draw (-1,-4.8) node[circle, fill, inner sep=1.2pt, black]{};
\draw (1,-4.8) node[circle, fill, inner sep=1.2pt, black]{};
\draw (3,-4.8) node[circle, fill, inner sep=1.2pt, black]{};
\draw (5,-4.8) node[circle, fill, inner sep=1.2pt, red]{};
\draw (7,-4.8) node[circle, fill, inner sep=1.2pt, black]{};
\draw (9,-4.8) node[circle, fill, inner sep=1.2pt, red]{};

\draw (0,0) node[above]{$-2$};
\draw (-1,-1.6) node[above]{$-2$};
\draw (1,-1.6) node[above]{$-2$};
\draw (0,-3.2) node[above]{$-5$};
\draw (-9,-4.8) node[above]{$-2$};
\draw (-7,-4.8) node[above]{$-4$};
\draw (-5,-4.8) node[above]{$-2$};
\draw (-3,-4.8) node[above]{$-3$};
\draw (-1,-4.8) node[above]{$-4$};
\draw (1,-4.8) node[above]{$-4$};
\draw (3,-4.8) node[above]{$-3$};
\draw (5,-4.8) node[above]{$-4$};
\draw (7,-4.8) node[above]{$-2$};
\draw (9,-4.8) node[above]{$-2$};

\draw (-1,-1.6)--(1,-1.6) (-8.94,-4.8)--(-5.06,-4.8) (-4.94,-4.8)--(4.94,-4.8) (5.06,-4.8)--(8.94,-4.8);

\draw[densely dotted] (-7,-4.8)--(-7,-5.3) (-3,-4.8)--(-3,-5.3) (1,-4.8)--(1,-5.3) (5,-4.86)--(5,-5.3)  (9,-4.86)--(9,-5.3);

\draw (0,0) node[below]{$e_{14}+e_{15}$};
\draw (-1.3,-1.6) node[below]{$-e_{11}+e_{12}$};
\draw (1.3,-1.6) node[below]{$e_{11}+e_{13}$};
\draw (0,-3.2) node[below]{$e_{11}+e_{12}-e_{13}-e_{14}+e_{15}$};
\draw (-9, -4.8) node[below]{$-e_1+e_2$};
\draw (-7, -5.3) node[below]{$e_1-e_3-e_4+e_5$};
\draw (-5, -4.8) node[below]{$e_3+e_6$};
\draw (-3, -5.3) node[below]{$-e_3+e_4-e_7$};
\draw (-1, -4.8) node[below]{$e_7+e_8+e_9-e_{10}$};
\draw (1, -5.3) node[below]{$e_3+e_5-e_6-e_7$};
\draw (3, -4.8) node[below]{$e_1+e_2-e_5$};
\draw (5, -5.3) node[below]{$e_4+e_5+e_7-e_8$};
\draw (7, -4.8) node[below]{$e_8-e_9$};
\draw (8.5, -5.3) node[below]{$e_9+e_{10}$};
\end{tikzpicture}
\caption{An essentially unique embedding of $Q_{X(2,1)}\oplus Q_{X(3,2)} \oplus  Q_{X(5,1)} \oplus  Q_{X(9409,5519)}$ into $-\mathbb{Z}^{15}$.}
\label{fig:embedding2}
\end{figure}

From the embedding, it is evident that the orthogonal complement $\iota(Q_X)^\perp$ of $\iota(Q_X)$ in $Q_W\cong -\Bbb Z^{15}$ is generated by the vector $2e_{11}+2e_{12}-2e_{13}+3e_{14}-3e_{15}$. In particular, $\det \left(\iota(Q_X)^\perp\right) = |2e_{11}+2e_{12}-2e_{13}+3e_{14}-3e_{15}|^2=-30$. However, we also have $\det \left(\iota(Q_{-M})\right)=\det Q_{-M}=\det(-Q_M)=-2\cdot 3\cdot 5\cdot 9409$. This implies that $\iota(Q_{-M})$ is properly contained within $\iota(Q_X)^\perp$. (The index $[\iota(Q_X)^\perp:\iota(Q_{-M})]$ is exactly 9409.) Consequently, such a smooth 4-manifold $M$ cannot exist by Proposition \ref{prop:2.8}.

We also note that tubing the 2-spheres corresponding to the red vertices in Figure \ref{fig:embedding2} results in a smoothly embedded characteristic 2-sphere in $Y$. This satisfies the condition in Theorem \ref{thm:2.7}.
\end{example}

\begin{remark}
In the example above, we observe that $9409 = 97^2$, which implies that the determinant $\det Q_X = 2 \cdot 3 \cdot 5 \cdot 9409$ is not square-free. The scenario described in the example does not occur if the boundary of $M$, denoted as $\partial M = L(p_1,q_1) \amalg L(p_2,q_2) \amalg L(p_3,q_3) \amalg L(p_4,q_4)$, satisfies the condition where $(p_i, p_j) = 1$ for $i \neq j$ and the product $p_1p_2p_3p_4$ is square-free.
\end{remark}

\subsubsection{Heegaard Floer $d$-invariants}
Another important condition arises from Heegaard Floer theory, introduced by Ozsv\'ath and Szab\'o \cite{Ozsvath-Szabo-2003}. A closed oriented 3-manifold $Y$ is defined as an \textit{L-space} if the rank of the hat version of Heegaard Floer homology of $Y$, denoted by $\textrm{rank}(\widehat{HF}(Y))$, equals $|H_1(Y;\Bbb Z)|$. Lens spaces are classified as $L$-spaces, and the property of being an $L$-space is preserved under connected sums \cite{Ozsvath-Szabo-2005}. 

A particularly relevant invariant in Heegaard Floer homology, which encapsulates information about 4-dimensional manifolds bounded by $Y$, is the \textit{d-invariant}, also known as the \textit{correction term}. This rational-valued invariant applies to a pair $(Y,\mathfrak{t})$, where $Y$ is a closed, oriented rational homology 3-sphere and $\mathfrak{t}$ is a spin$^c$ structure on $Y$. For further details, see \cite{Ozsvath-Szabo-2003}. In particular, for $d$-invariants of the boundary $3$-manifolds of a spin cobordism, especially those with small $b_2$, we have the following.


\begin{proposition}[{\cite[Lemma 2.7]{Lidman-Moore-Vazquez-2019}}]\label{prop:spin_cobordism} Let $(W,\mathfrak{s}):(Y,\mathfrak{t})\to (Y',\mathfrak{t}')$ be a spin cobordism between $L$-spaces satisfying $b_2^+(W)=1$ and $b_2^-(W)=0$. Then \[d(Y',\mathfrak{t}')-d(Y,\mathfrak{t})=-\frac{1}{4}.  \]    
\end{proposition}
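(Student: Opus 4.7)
The plan is to establish $d(Y', \mathfrak{t}') - d(Y, \mathfrak{t}) = -\tfrac{1}{4}$ by matching upper and lower bounds, both derived from variations of the Ozsv\'ath--Szab\'o $d$-invariant inequality for semi-definite $4$-manifolds bounded by $L$-spaces.

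First, I will establish the upper bound by reversing the orientation of $W$ to produce a spin cobordism $-W : -Y' \to -Y$ satisfying $b_2^+(-W) = b_2^-(W) = 0$ and $b_2(-W) = 1$; in particular, $-W$ is negative semi-definite with disjoint boundary $Y \sqcup (-Y')$, each component of which is an $L$-space (as the $L$-space property is preserved under orientation reversal). Since $\mathfrak{s}$ is spin we have $c_1(\mathfrak{s}) = 0$, so the Ozsv\'ath--Szab\'o $d$-invariant inequality applied to $-W$ yields
\[
c_1(\mathfrak{s})^2 + b_2(-W) \,\leq\, 4\bigl(d(Y, \mathfrak{t}) + d(-Y', \mathfrak{t}')\bigr).
\]
Using the orientation-reversal identity $d(-Y', \mathfrak{t}') = -d(Y', \mathfrak{t}')$, valid for self-conjugate (hence spin) spin$^c$ structures, and substituting $c_1(\mathfrak{s})^2 = 0$ and $b_2(-W) = 1$, this rearranges to $d(Y', \mathfrak{t}') - d(Y, \mathfrak{t}) \leq -\tfrac{1}{4}$.

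For the matching lower bound, I will analyze the Ozsv\'ath--Szab\'o cobordism map $F^+_{W, \mathfrak{s}} : HF^+(Y, \mathfrak{t}) \to HF^+(Y', \mathfrak{t}')$. Since both source and target are copies of the standard tower $\mathcal{T}^+$ shifted by the $d$-invariants (this is the $L$-space condition), the expected grading shift $\tfrac{1}{4}\bigl(c_1(\mathfrak{s})^2 - 2\chi(W) - 3\sigma(W)\bigr)$ together with the rigid $L$-space grading structure and the non-triviality of the induced map on an appropriate subquotient of Heegaard Floer homology will force the reverse inequality. Combining both bounds then yields the claimed equality.

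The main obstacle is the lower bound. Whereas the standard $d$-invariant inequality applied to $-W$ immediately furnishes the upper bound, the reverse estimate requires a more delicate ingredient, either a careful analysis of the cobordism map on $HF^+$ (or its interplay with $HF^{\infty}$ and $HF_{\textup{red}}$) or a dual inequality adapted to the positive semi-definite cobordism $W$. The precise combination of hypotheses, namely a spin cobordism between $L$-spaces with $b_2^+(W) = 1$ and $b_2^-(W) = 0$, is what makes the difference pin down to exactly $-\tfrac{1}{4}$, and any weakening of these hypotheses would generally destroy the equality.
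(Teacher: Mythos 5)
Your upper bound argument is correct in spirit. Viewing $-W$ as a negative definite four-manifold with boundary $Y \sqcup (-Y')$ and applying the Ozsv\'ath--Szab\'o $d$-invariant inequality (after tubing the two boundary components so the inequality applies to a single rational homology sphere $Y \# (-Y')$, and implicitly assuming $b_1(W)=0$) does give $1 = c_1(\mathfrak{s})^2 + b_2(-W) \leq 4\bigl(d(Y,\mathfrak{t}) - d(Y',\mathfrak{t}')\bigr)$, hence $d(Y',\mathfrak{t}') - d(Y,\mathfrak{t}) \leq -\tfrac14$. (Minor orientation slip: $-W$ is naturally a cobordism from $-Y$ to $-Y'$ or, read the other way, from $Y'$ to $Y$, rather than from $-Y'$ to $-Y$; this does not affect the inequality you write.)

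The lower bound, however, is not proved, and you acknowledge it is ``the main obstacle.'' The specific route you sketch is problematic: you propose to extract information from the map $F^+_{W,\mathfrak{s}} : HF^+(Y,\mathfrak{t}) \to HF^+(Y',\mathfrak{t}')$ via its grading shift together with ``non-triviality on an appropriate subquotient,'' but since $b_2^+(W) = 1 > 0$, the induced map $F^\infty_{W,\mathfrak{s}}$ vanishes identically (Ozsv\'ath--Szab\'o), and there is no general nonvanishing statement for $F^+_{W,\mathfrak{s}}$ in this situation. The standard nonvanishing/isomorphism input (that $F^\infty$ is an isomorphism) only applies to the \emph{negative} definite cobordism obtained by reversing orientation, but the surjectivity of $F^+$ and injectivity of $F^-$ one derives from that isomorphism both yield the \emph{same} inequality $d(Y',\mathfrak{t}') - d(Y,\mathfrak{t}) \leq -\tfrac14$ that you already have; they do not furnish the reverse bound. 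Pinning down the equality requires an additional ingredient (this is precisely what \cite[Lemma 2.7]{Lidman-Moore-Vazquez-2019} supplies), and without it your argument establishes only an inequality, not the claimed identity.
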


The following is an immediate corollary from the fact $d(S^3,\mathfrak{t})=0$, where $\mathfrak{t}$ is the unique spin$^c$ structure on $S^3$.

\begin{corollary}[The spin $d$-invariant condition]\label{cor:2.10} Let $(W,\mathfrak{s})$ be a compact oriented spin smooth 4-manifold whose boundary $Y$ is an $L$-space. If $b_2^+(W)=1$ and $b_2^-(W)=0$, then $d(Y,\mathfrak{s}|_Y)=-\frac{1}{4}$.  
\end{corollary}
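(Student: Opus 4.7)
The plan is to deduce this corollary directly from Proposition \ref{prop:spin_cobordism} by puncturing $W$ to convert it into a spin cobordism from $S^3$ to $Y$. First I would remove an open 4-ball from the interior of $W$ to obtain $W' := W \setminus \mathring{B}^4$, viewed as a smooth cobordism $S^3 \to Y$. Since $B^4$ is spin and any two spin structures on $B^4$ are equivalent, the spin structure $\mathfrak{s}$ on $W$ restricts to a spin structure $\mathfrak{s}'$ on $W'$ whose boundary restrictions are the unique spin structure $\mathfrak{t}_0$ on $S^3$ and the given $\mathfrak{s}|_Y$ on $Y$.

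Next I would verify the hypotheses of Proposition \ref{prop:spin_cobordism} for $(W', \mathfrak{s}')$. Removing an open 4-ball does not affect the second homology, so $b_2^+(W') = 1$ and $b_2^-(W') = 0$. The incoming end $S^3$ is an $L$-space (trivially, since $\widehat{HF}(S^3) \cong \mathbb{Z}$), and the outgoing end $Y$ is an $L$-space by assumption. Hence $(W', \mathfrak{s}')$ is a spin cobordism between $L$-spaces satisfying the required signature condition.

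Applying Proposition \ref{prop:spin_cobordism} then yields
\[ d(Y, \mathfrak{s}|_Y) - d(S^3, \mathfrak{t}_0) = -\frac{1}{4}. \]
Since $d(S^3, \mathfrak{t}_0) = 0$, we conclude $d(Y, \mathfrak{s}|_Y) = -\frac{1}{4}$, as desired. There is essentially no obstacle here: the entire content lies in Proposition \ref{prop:spin_cobordism}, and the corollary is only a matter of interpreting $W$ as a relative cobordism from $S^3$ via the standard puncturing trick and invoking the normalization $d(S^3) = 0$.
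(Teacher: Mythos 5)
Your argument is correct and is precisely the approach the paper intends: the paper states the corollary is immediate from Proposition~\ref{prop:spin_cobordism} together with $d(S^3)=0$, which is exactly the puncturing trick you spell out. No gaps.
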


Suppose $Y:=L(p_1,q_1)\# \cdots \# L(p_4,q_4)$ bounds a compact, oriented, smooth 4-manifold $M$ with $H_1(M;\mathbb{Z})=0$ and $b_2(M)=b_2^+(M)=1$. The intersection form of $M$ is represented by the matrix $(p_1p_2p_3p_4)$ as stated in Lemma \ref{lem:linking_form} (1). If $p_1p_2p_3p_4$ is even, then $M$ possesses a spin structure \cite[Corollary 5.7.6]{Gompf-Stipsicz-1999}, and Corollary \ref{cor:2.10} implies that $d(Y,\mathfrak{s})=-\frac{1}{4}$ for some spin structure $\mathfrak{s}$ of $Y$. From this, we infer that if a singularity type $\left\{(p_1,q_1),\dots,(p_4,q_4)\right\}$ with even $p_1p_2p_3p_4$ is realized by a rational homology $\mathbb{CP}^2$ whose smooth locus has trivial first integral homology group, then $d(Y,\mathfrak{s})=-\frac{1}{4}$ for some spin structure $\mathfrak{s}$. In such instances, we state that the type $\left\{(p_1,q_1),\dots ,(p_4,q_4)\right\}$ \emph{satisfies the spin d-invariant condition}. Since the $d$-invariant is additive under the connected sum operation \cite[Theorem 4.3]{Ozsvath-Szabo-2003}, it follows that $d(Y,\mathfrak{s})=\sum_{i=1}^4 (d(L(p_i,q_i),\mathfrak{s}_i)$ if $\mathfrak{s}=\mathfrak{s}_1\#\cdots \# \mathfrak{s}_4$. Note that every spin structure $\mathfrak{s}$ of $Y$ is expressible in the form $\mathfrak{s}_1\#\cdots \# \mathfrak{s}_4$.

Recall that the orientation of $L(p,q)$ is defined so that it is obtained by $-(p/q)$-surgery along the unknot in $S^3$. Following \cite[Proposition 4.8]{Ozsvath-Szabo-2003} (their orientation convention is different from ours), we can choose an identification $\text{Spin}^c(L(p,q))\cong \Bbb Z_p$ such that the following recursive formula for the $d$-invariants is applicable: 
\begin{equation}\label{eq:d-invariant}
    d(L(p,q),i)=\frac{1}{4}-\frac{(2i+1-p-q)^2}{4pq}-d(L(q,r),j),
\end{equation}
assuming $0<q<p$ and $0\leq i<p+q$, where $r$ and $j$ are the reductions modulo $q$ of $p$ and $i$ respectively. Note that the spin structures of $L(p,q)$ correspond exactly to the integers among $(q-1)/2$ and $(p+q-1)/2$. In particular, if $p$ is odd, then $L(p,q)$ possesses a unique spin structure, which can also be inferred from the fact that $H^1(L(p,q);\Bbb Z_2)=0$.

\begin{example} We have $d(L(2,1),0)=-\frac{1}{4}$, $d(L(2,1),1)=\frac{1}{4}$, $d(L(3,2),2)=\frac{1}{2}$, $d(L(5,1),0)=-1$, $d(L(5,2),3)=0$, $d(L(5,4),4)=1$, and $d(L(p_4,1),0)=\displaystyle\frac{1-p_4}{4}$
from the recursive formula given above. Therefore, the $d$-invariant of $L(2,1)\# L(3,2) \# L(5,q_3) \# L(p_4,1)$ with $q_3=1,2$ or $4$ and $p_4>7$ cannot be $-\frac{1}{4}$ for all spin structures. Thus, the type $\{(2,1), (3,2), (5,q_3), (p_4,1)\}$ with $q_3=1,2,$ or $4$ and $p_4>7$ cannot be realized by a rational homology $\mathbb{CP}^2$ whose smooth locus has trivial first integral homology group. This, in fact, leads to the following corollary (cf. \cite[Lemma 2.17(1)]{Hwang-2021}). \end{example}

\begin{corollary} A singularity of type $(2,3,5,n)$ cannot be realized by a rational homology $\mathbb{CP}^2$ whose smooth locus has trivial first integral homology group if the fourth singularity is a rational double point.   
\end{corollary}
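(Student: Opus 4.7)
The plan is to invoke the spin $d$-invariant condition (Corollary \ref{cor:2.10}), which applies because $p_1 p_2 p_3 p_4 = 30n$ is even, and then clean up a single boundary case with the linking form condition.

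Using the equivalence $q_i \sim q_i^{-1} \pmod{p_i}$, I would reduce the possible singularity types to $\{(2,1),\, (3, q_2),\, (5, q_3),\, (n, 1)\}$ with $q_2 \in \{1, 2\}$ and $q_3 \in \{1, 2, 4\}$; the rational double point hypothesis forces $q_4 = 1$ in the paper's convention, since $n/(n - q_4) = [2,\ldots,2]$ iff $q_4 = 1$. Each of $L(3, q_2), L(5, q_3), L(n, 1)$ has a unique spin structure (as $3, 5, n$ are odd), while $L(2, 1)$ admits two. Using the recursion \eqref{eq:d-invariant}, I would tabulate the relevant $d$-invariants (which the paper has already done in the preceding example for $q_2 = 2$; the case $q_2 = 1$ requires only the additional value $d(L(3, 1)) = -\tfrac{1}{2}$, obtainable from the recursion or from orientation reversal of $L(3, 2)$). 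Setting $d(Y, \mathfrak{s}) = -\tfrac{1}{4}$ and solving yields
\[ n \;=\; 2 + 4(\epsilon_1 + \epsilon_2 + \epsilon_3), \]
where $\epsilon_1 \in \{\pm \tfrac{1}{4}\}$, $\epsilon_2 \in \{\pm \tfrac{1}{2}\}$, and $\epsilon_3 \in \{-1, 0, 1\}$.

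I would then enumerate the twelve combinations and intersect with the constraints $n \geq 7$ and $\gcd(n, 30) = 1$. I expect exactly one combination to survive, namely $(i_1, q_2, q_3) = (0, 2, 4)$, which gives $n = 7$ and the singularity type $\{(2,1), (3,2), (5,4), (7,1)\}$. To kill this last case, I would apply Proposition \ref{prop:linking_form}: a direct computation gives
\[ q \;=\; 1\cdot 3\cdot 5\cdot 7 + 2\cdot 2\cdot 5\cdot 7 + 2\cdot 3\cdot 4\cdot 7 + 2\cdot 3\cdot 5\cdot 1 \;=\; 443, \]
so $-q \equiv 187 \pmod{210}$. Since $187 \equiv 2 \pmod 5$ is not a quadratic residue mod $5$, $-q$ fails to be a quadratic residue mod $p_1 p_2 p_3 p_4 = 210$. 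As an alternative end-game for $n = 7$, formula \eqref{eq:KS_square} gives $K_S^2 = \tfrac{32}{15}$, hence $D = 448$ is not a perfect square, also contradicting Proposition \ref{prop:D_square}.

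No step is conceptually difficult; the bookkeeping of the twelve cases is the only labor, and the key observation is that combining the spin $d$-invariant condition (which cuts down to a single type) with one linking-form check resolves the corollary.
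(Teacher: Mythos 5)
Your proof is correct and arrives at the same terminal case as the paper, but it takes a more self-contained route. The paper first invokes an external algebraic-geometry result (\cite[Lemma 5.3]{Hwang-Keum-2013}) to force $q_2 = 2$, and only then runs the spin $d$-invariant computation for $q_2 = 2$; you instead absorb both $q_2 = 1$ and $q_2 = 2$ directly into the $d$-invariant enumeration by also computing $d(L(3,1),0) = -\tfrac12$ (e.g.\ from $L(3,1) \cong -L(3,2)$). Your formula $n = 2 + 4(\epsilon_1+\epsilon_2+\epsilon_3)$ and the twelve-case enumeration correctly show that, under the constraints $n \geq 7$ and $\gcd(n,30)=1$, only $(\epsilon_1,\epsilon_2,\epsilon_3)=(-\tfrac14,\tfrac12,1)$ survives, giving $n=7$ with $q_2=2$, $q_3=4$ — the same remaining type $\{(2,1),(3,2),(5,4),(7,1)\}$ the paper handles. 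Your linking-form check ($-q \equiv 187 \pmod{210}$, not a QR mod $5$) and the alternative squareness check ($D = 448 = 2^6\cdot 7$) are both correct. What your route buys is independence from the Hwang--Keum lemma, making the corollary purely a consequence of the topological/smooth obstructions developed in the paper; the cost is a slightly longer (but entirely routine) case enumeration. Your identification that ``rational double point'' means $q_4 = 1$ in the paper's convention (since the resolution curves have weights from the expansion of $p_4/(p_4-q_4)$, not $p_4/q_4$) is also correct.
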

\begin{proof} For a singularity type $\{(2,1),(3,q_2),(5,q_3),(p_4,1)\}$, we must have $q_2=2$ by \cite[Lemma 5.3]{Hwang-Keum-2013}, and the argument above excludes all such types except the type $\{(2,1),(3,2),(5,4), (7,1)\}$. However, this type does not satisfy the linking form condition (Proposition \ref{prop:linking_form}). 
\end{proof}

\begin{example} Since $d(L(p_4,p_4-1),p_4-1)=\displaystyle\frac{p_4-1}{4}$, the same argument as in the example above also shows that the type $\{(2,1), (3,2), (5,q_3), (p_4,p_4-1)\}$ with $q_3=1,2,$ or $4$ and $p_4\geq 7$ cannot be realized by a rational homology $\mathbb{CP}^2$ whose smooth locus has trivial first integral homology group. In fact, Donaldson's theorem can be applied to demonstrate that such types cannot be realized by a rational homology $\mathbb{CP}^2$ (see Proposition \ref{lem:Donaldson_and_2's}).
\end{example}

\subsection{Independency of conditions}\label{subsec:independence}
In this subsection, we will briefly discuss the independency of conditions we have established, primarily obtained through examples. 

\begin{example}\label{ex:9409} Consider a singularity of type $\{(2,1),(3,2),(5,1),(9409, 5519)\}$ as in Example \ref{ex:index}. In this case, we have 
\[
    K_S^2=\frac{1210}{28227}< 3\cdot \frac{9439}{282270}=3e_{\textrm{orb}}(S),
\] 
so that the oBMY inequality is satisfied. Since 
    $-q=-551339\equiv 22889^2 \mod 282270$,
the linking form condition is also met. Furthermore, we have $D=12100=110^2$ and 
\[
    d(L(2,1),1)+d(L(3,2),2)+d(L(5,1),0)+d(L(9409, 5519),2759)=\frac{1}{4}+\frac{1}{2}+(-1)+0=-\frac{1}{4}, 
\] 
indicating that both the perfect squareness of $D$ (Proposition \ref{prop:D_square}) and the spin $d$-invariant condition (Corollary \ref{cor:2.10}) are satisfied. Therefore, this type can be obstructed solely by Proposition \ref{prop:2.8}. \end{example}

\begin{example}
A singularity of type $\left\{(2,1),(3,2),(5,1),(3529,1880)\right\}$ can be obstructed only by the squareness of $D$ condition (Proposition \ref{prop:D_square}). More precisely, we have 
\[
    K_S^2=\frac{874}{10587}< 3\cdot \frac{3559}{105870}=3e_{\textrm{orb}}(S)
\]
and 
\[
    -q=-201089\equiv 3521^2 \mod 105870.
\]
Furthermore, there exists an essentially unique lattice embedding of $Q_X$ into $-\mathbb{Z}^{17}$ as illustrated in Figure \ref{fig:embedding_D-square}, where \[X:=X(2,1)\natural X(3,2) \natural X(5,1)\natural X(3529,1880).\]
Tubing the 2-spheres corresponding to the red vertices results in a smoothly embedded characteristic 2-sphere, satisfying Theorem \ref{thm:2.7}. The orthogonal complement of $Q_X$ in $-\mathbb{Z}^{17}$ is generated by the vector 
\[
    15(e_1+e_2)-16(e_3+e_4-e_5)-24(e_6-e_7)+30(e_8+e_9-e_{10})+120(e_{11}+e_{12}+e_{13}+e_{14}+e_{15}+e_{16}-e_{17}),
\]
whose self-intersection number equals $-105870=-2\cdot 3\cdot 5\cdot 3529$, satisfying the condition of Proposition \ref{prop:2.8} (note that 3529 is a prime). Additionally, we have \[d(L(2,1),1)+d(L(3,2),2)+d(L(5,1),0)+d(L(3529,1880),2704)=\frac{1}{4}+\frac{1}{2}+(-1)+0=-\frac{1}{4}, \] 
thus fulfilling the spin $d$-invariant condition (Corollary \ref{cor:2.10}). However, \[
D=2\cdot 3\cdot 5\cdot 3529 \cdot \frac{874}{10587}=8740=2^2\cdot 5\cdot 19\cdot 23
\]
is not a square number.
\end{example}

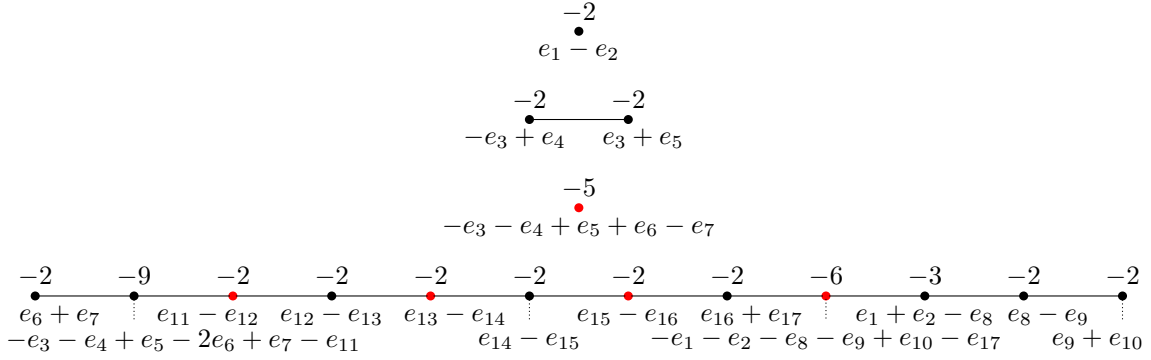
\begin{figure}[!th]
\centering
\begin{tikzpicture}[scale=0.65]
\draw (0,0) node[circle, fill, inner sep=1.2pt, black]{};
\draw (-1,-1.8) node[circle, fill, inner sep=1.2pt, black]{};
\draw (1,-1.8) node[circle, fill, inner sep=1.2pt, black]{};
\draw (0,-3.6) node[circle, fill, inner sep=1.2pt, red]{};
\draw (-9,-5.4) node[circle, fill, inner sep=1.2pt, black]{};
\draw (-7,-5.4) node[circle, fill, inner sep=1.2pt, red]{};
\draw (-5,-5.4) node[circle, fill, inner sep=1.2pt, black]{};
\draw (-3,-5.4) node[circle, fill, inner sep=1.2pt, red]{};
\draw (-1,-5.4) node[circle, fill, inner sep=1.2pt, black]{};
\draw (1,-5.4) node[circle, fill, inner sep=1.2pt, red]{};
\draw (3,-5.4) node[circle, fill, inner sep=1.2pt, black]{};
\draw (5,-5.4) node[circle, fill, inner sep=1.2pt, red]{};
\draw (7,-5.4) node[circle, fill, inner sep=1.2pt, black]{};
\draw (9,-5.4) node[circle, fill, inner sep=1.2pt, black]{};
\draw (11,-5.4) node[circle, fill, inner sep=1.2pt, black]{};
\draw (-11,-5.4) node[circle, fill, inner sep=1.2pt, black]{};

\draw (0,0) node[above]{$-2$};
\draw (-1,-1.8) node[above]{$-2$};
\draw (1,-1.8) node[above]{$-2$};
\draw (0,-3.6) node[above]{$-5$};
\draw (-9,-5.4) node[above]{$-9$};
\draw (-7,-5.4) node[above]{$-2$};
\draw (-5,-5.4) node[above]{$-2$};
\draw (-3,-5.4) node[above]{$-2$};
\draw (-1,-5.4) node[above]{$-2$};
\draw (1,-5.4) node[above]{$-2$};
\draw (3,-5.4) node[above]{$-2$};
\draw (5,-5.4) node[above]{$-6$};
\draw (7,-5.4) node[above]{$-3$};
\draw (9,-5.4) node[above]{$-2$};
\draw (11,-5.4) node[above]{$-2$};
\draw (-11,-5.4) node[above]{$-2$};

\draw (-1,-1.8)--(1,-1.8) (-11,-5.4)--(-7.06,-5.4) (-6.94,-5.4)--(-3.06,-5.4) (-2.94,-5.4)--(0.94,-5.4) (1.06,-5.4)--(4.94,-5.4) (5.06,-5.4)--(11,-5.4);

\draw[densely dotted] (-9,-5.4)--(-9,-5.9) (-1,-5.4)--(-1,-5.9) (5,-5.46)--(5,-5.9) (11,-5.4)--(11,-5.9);

\draw (0,0) node[below]{$e_1-e_2$};
\draw (-1.3,-1.8) node[below]{$-e_3+e_4$};
\draw (1.3,-1.8) node[below]{$e_3+e_5$};
\draw (0,-3.6) node[below]{$-e_3-e_4+e_5+e_6-e_7$};
\draw (-10.5, -5.4) node[below]{$e_6+e_7$};
\draw (-8, -5.9) node[below]{$-e_3-e_4+e_5-2e_6+e_7-e_{11}$};
\draw (-7.5, -5.4) node[below]{$e_{11}-e_{12}$};
\draw (-5, -5.4) node[below]{$e_{12}-e_{13}$};
\draw (-2.5, -5.4) node[below]{$e_{13}-e_{14}$};
\draw (-1, -5.9) node[below]{$e_{14}-e_{15}$};
\draw (1, -5.4) node[below]{$e_{15}-e_{16}$};
\draw (3.5, -5.4) node[below]{$e_{16}+e_{17}$};
\draw (5, -5.9) node[below]{$-e_1-e_2-e_8-e_9+e_{10}-e_{17}$};
\draw (7, -5.4) node[below]{$e_1+e_2-e_8$};
\draw (9.5, -5.4) node[below]{$e_8-e_9$};
\draw (10.5, -5.9) node[below]{$e_9+e_{10}$};
\end{tikzpicture}
\caption{An essentially unique embedding of $Q_{X(2,1)}\oplus Q_{X(3,2)} \oplus  Q_{X(5,1)} \oplus  Q_{X(3529,1880)}$ into $-\mathbb{Z}^{17}$.}
\label{fig:embedding_D-square}
\end{figure}

\begin{example}\label{example:4771} 
Consider a singularity of type $\{(2,1),(3,2),(5,1),(4771,634)\}$. We have 
\[
    K_S^2=\frac{640}{14313}< 3\cdot \frac{4801}{143130}=3e_{\textrm{orb}}(S),
\]
\[
    D=2\cdot 3\cdot 5\cdot 4771 \cdot \frac{640}{14313}=6400=80^2,
\]
and
\[
    -q=-214631\equiv 4763^2 \mod 143130.
\]
Also, there is an essentially unique lattice embedding of $Q_X$ into $-\mathbb{Z}^{18}$ as depicted in Figure \ref{fig:4771}, where \[X=X(2,1)\natural X(3,2) \natural X(5,1)\natural X(4771,634).\]
The orthogonal complement of $Q_X$ in $-\mathbb{Z}^{18}$ is generated by the vector 
\[
    8(e_1+e_2-e_3)+30(e_4-e_5)-45(e_6+e_7)+48e_8+72e_9+120(e_{10}+\cdots+e_{17}-e_{18}),
\]
whose self-intersection number equals $-143130=-2\cdot 3\cdot 5\cdot 4771$, satisfying the condition of Proposition \ref{prop:2.8} (note that $4771=13\cdot 367$). However, tubing the 2-spheres corresponding to the red vertices in Figure \ref{fig:4771} yields a smoothly embedded characteristic 2-sphere $\Sigma$ with 
\[ 
    [\Sigma]^2=-26 \not\equiv -18 \mod 16,
\] 
so that this type can be obstructed by Theorem \ref{thm:2.7}. Additionally, this type can be also obstructed by the spin $d$-invariant condition, as $d(L(4771,634),2702)=-\frac{3}{2}$.

On the other hand, the following example illustrates that the condition from the Kervaire-Milnor theorem and the spin $d$-invariant condition are independent of each other. 
\end{example}

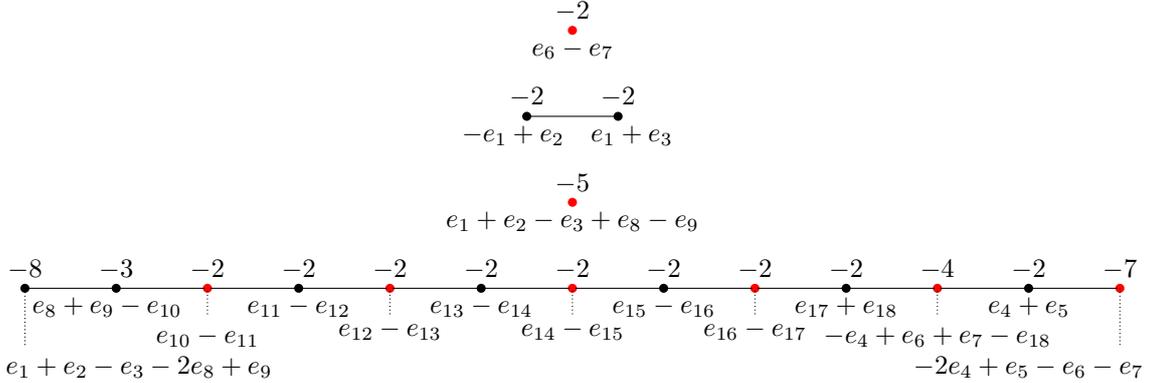
\begin{figure}[!th]
\centering
\begin{tikzpicture}[scale=0.6]
\draw (0,0) node[circle, fill, inner sep=1.2pt, red]{};
\draw (-1,-1.9) node[circle, fill, inner sep=1.2pt, black]{};
\draw (1,-1.9) node[circle, fill, inner sep=1.2pt, black]{};
\draw (0,-3.8) node[circle, fill, inner sep=1.2pt, red]{};
\draw (-12,-5.7) node[circle, fill, inner sep=1.2pt, black]{};
\draw (-10,-5.7) node[circle, fill, inner sep=1.2pt, black]{};
\draw (-8,-5.7) node[circle, fill, inner sep=1.2pt, red]{};
\draw (-6,-5.7) node[circle, fill, inner sep=1.2pt, black]{};
\draw (-4,-5.7) node[circle, fill, inner sep=1.2pt, red]{};
\draw (-2,-5.7) node[circle, fill, inner sep=1.2pt, black]{};
\draw (0,-5.7) node[circle, fill, inner sep=1.2pt, red]{};
\draw (2,-5.7) node[circle, fill, inner sep=1.2pt, black]{};
\draw (4,-5.7) node[circle, fill, inner sep=1.2pt, red]{};
\draw (6,-5.7) node[circle, fill, inner sep=1.2pt, black]{};
\draw (8,-5.7) node[circle, fill, inner sep=1.2pt, red]{};
\draw (10,-5.7) node[circle, fill, inner sep=1.2pt, black]{};
\draw (12,-5.7) node[circle, fill, inner sep=1.2pt, red]{};

\draw (0,0) node[above]{$-2$};
\draw (-1,-1.9) node[above]{$-2$};
\draw (1,-1.9) node[above]{$-2$};
\draw (0,-3.8) node[above]{$-5$};
\draw (-12,-5.7) node[above]{$-8$};
\draw (-10,-5.7) node[above]{$-3$};
\draw (-8,-5.7) node[above]{$-2$};
\draw (-6,-5.7) node[above]{$-2$};
\draw (-4,-5.7) node[above]{$-2$};
\draw (-2,-5.7) node[above]{$-2$};
\draw (0,-5.7) node[above]{$-2$};
\draw (2,-5.7) node[above]{$-2$};
\draw (4,-5.7) node[above]{$-2$};
\draw (6,-5.7) node[above]{$-2$};
\draw (8,-5.7) node[above]{$-4$};
\draw (10,-5.7) node[above]{$-2$};
\draw (12,-5.7) node[above]{$-7$};

\draw (-1,-1.9)--(1,-1.9) 
(-12,-5.7)--(-8.06,-5.7) (-7.94,-5.7)--(-4.06,-5.7) (-3.94,-5.7)--(-0.06,-5.7)
(0.06,-5.7)--(3.94,-5.7)
(7.94,-5.7)--(4.06,-5.7)
(11.94,-5.7)--(8.06,-5.7);

\draw[densely dotted] (-12,-5.7)--(-12,-7) 
(-8,-5.76)--(-8,-6.3)
(-4,-5.76)--(-4,-6.3)
(0,-5.76)--(0,-6.3)
(4,-5.76)--(4,-6.3)
(8,-5.76)--(8,-6.3)
(12,-5.7)--(12,-7);

\draw (0,0) node[below]{$e_6-e_7$};
\draw (-1.3,-1.9) node[below]{$-e_1+e_2$};
\draw (1.3,-1.9) node[below]{$e_{1}+e_{3}$};
\draw (0,-3.8) node[below]{$e_{1}+e_{2}-e_{3}+e_{8}-e_{9}$};
\draw (-9.5,-7) node[below]{$e_1+e_2-e_3-2e_8+e_9$};
\draw (-10.2,-5.7) node[below]{$e_8+e_9-e_{10}$};
\draw (-8,-6.35) node[below]{$e_{10}-e_{11}$};
\draw (-6,-5.7) node[below]{$e_{11}-e_{12}$};
\draw (-4,-6.2) node[below]{$e_{12}-e_{13}$};
\draw (-2,-5.7) node[below]{$e_{13}-e_{14}$};
\draw (0,-6.2) node[below]{$e_{14}-e_{15}$};
\draw (2,-5.7) node[below]{$e_{15}-e_{16}$};
\draw (4,-6.2) node[below]{$e_{16}-e_{17}$};
\draw (6,-5.7) node[below]{$e_{17}+e_{18}$};
\draw (8,-6.35) node[below]{$-e_4+e_6+e_7-e_{18}$};
\draw (10,-5.7) node[below]{$e_4+e_5$};
\draw (10,-7) node[below]{$-2e_4+e_5-e_6-e_7$};
\end{tikzpicture}
\caption{An essentially unique embedding of $Q_{X(2,1)}\oplus Q_{X(3,2)} \oplus  Q_{X(5,1)} \oplus  Q_{X(4771,634)}$ into $-\mathbb{Z}^{18}$.}
\label{fig:4771}
\end{figure}

\begin{example} (1) It is straightforward to demonstrate that a singularity of type 
\[ \{(2,1),(3,1),(7,3),(13,1)\} \]
is obstructed by the spin $d$-invariant condition, while it is not obstructed by the Kervaire-Milnor theorem (Theorem \ref{thm:2.7}). Figure \ref{fig:(13,1)} indicates an essentially unique embedding $Q_X\hookrightarrow -\mathbb{Z}^7$ and a characteristic $2$-sphere, where $X=X(2,1)\natural X(3,1)\natural X(7,3)\natural X(13,1).$
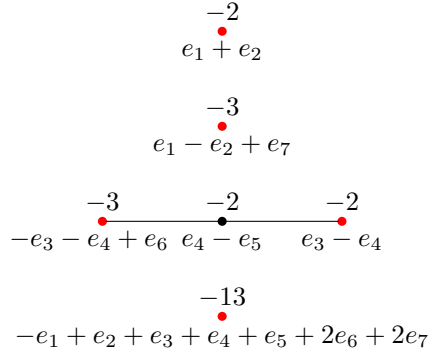
\begin{figure}[!th]
\centering
\begin{tikzpicture}[scale=0.7]
\draw (0,0) node[circle, fill, inner sep=1.2pt, red]{};
\draw (0,-1.8) node[circle, fill, inner sep=1.2pt, red]{};
\draw (-2.25,-3.6) node[circle, fill, inner sep=1.2pt, red]{};
\draw (0,-3.6) node[circle, fill, inner sep=1.2pt, black]{};
\draw (2.25,-3.6) node[circle, fill, inner sep=1.2pt, red]{};
\draw (0,-5.4) node[circle, fill, inner sep=1.2pt, red]{};

\draw (0,0) node[above]{$-2$};
\draw (0,-1.8) node[above]{$-3$};
\draw (-2.25,-3.6) node[above]{$-3$};
\draw (0,-3.6) node[above]{$-2$};
\draw (2.25,-3.6) node[above]{$-2$};
\draw (0,-5.4) node[above]{$-13$};

\draw (-2.19,-3.6)--(2.19,-3.6) ;

\draw (0,0) node[below]{$e_1+e_2$};
\draw (0,-1.8) node[below]{$e_1-e_2+e_7$};
\draw (-2.5,-3.6) node[below]{$-e_3-e_4+e_6$};
\draw (0,-3.6) node[below]{$e_4-e_5$};
\draw (2.25,-3.6) node[below]{$e_3-e_4$};
\draw (0,-5.4) node[below]{$-e_1+e_2+e_3+e_4+e_5+2e_6+2e_7$};
\end{tikzpicture}
\caption{An essentially unique embedding of $Q_{X(2,1)}\oplus Q_{X(3,1)} \oplus  Q_{X(7,3)} \oplus  Q_{X(13,1))}$ into $-\mathbb{Z}^{7}$.}
\label{fig:(13,1)}
\end{figure}

(2) Next, let us consider a singularity of type $\{(2,1),(3,1),(7,2),(25,3)\}$. We calculate that \[d(L(2,1),1)+d(L(3,1),0)+d(L(7,2),4)+d(L(25,3),2)=\frac{1}{4}-\frac{1}{2}+\frac{1}{2}-\frac{1}{2}=-\frac{1}{4}, \] 
which indicates that this type satisfies the spin $d$-invariant condition. On the other hand, there exist exactly two essentially distinct embeddings $Q_X\hookrightarrow -\mathbb{Z}^8$, as depicted in Figure \ref{fig:(25,3)}, where $X=X(2,1)\natural X(3,1)\natural X(7,2)\natural X(25,3).$ In both scenarios, tubing the 2-spheres corresponding to the red vertices results in a smoothly embedded characteristic 2-sphere $\Sigma$ with \[
[\Sigma]^2=-16\not\equiv -8 \mod 16.
\]
Thus, this type can be obstructed by the Kervaire-Milnor theorem.
\begin{figure}[!th]
\centering
\begin{tikzpicture}[scale=0.7]
\draw (0,0) node[circle, fill, inner sep=1.2pt, red]{};
\draw (0,-1.8) node[circle, fill, inner sep=1.2pt, red]{};
\draw (-2,-3.6) node[circle, fill, inner sep=1.2pt, black]{};
\draw (2,-3.6) node[circle, fill, inner sep=1.2pt, black]{};
\draw (-3,-5.4) node[circle, fill, inner sep=1.2pt, red]{};
\draw (0,-5.4) node[circle, fill, inner sep=1.2pt, black]{};
\draw (3,-5.4) node[circle, fill, inner sep=1.2pt, red]{};

\draw (0,0) node[above]{$-2$};
\draw (0,-1.8) node[above]{$-3$};
\draw (-2,-3.6) node[above]{$-4$};
\draw (2,-3.6) node[above]{$-2$};
\draw (-3,-5.4) node[above]{$-9$};
\draw (0,-5.4) node[above]{$-2$};
\draw (3,-5.4) node[above]{$-2$};

\draw  (-2,-3.6)--(2,-3.6) (-2.94,-5.4)--(2.94,-5.4);

\draw (0,0) node[below]{$e_6-e_7$};
\draw (0,-1.8) node[below]{$e_4+e_5-e_8$};
\draw (-2,-3.6) node[below]{$e_5+e_6+e_7+e_8$};
\draw (2,-3.6) node[below]{$e_4-e_5$};
\draw (-3.5,-5.4) node[below]{$-2e_1-2e_2-e_3$};
\draw (0,-5.4) node[below]{$e_2-e_3$};
\draw (3,-5.4) node[below]{$e_1-e_2$};

\draw (10,0) node[circle, fill, inner sep=1.2pt, red]{};
\draw (10,-1.8) node[circle, fill, inner sep=1.2pt, red]{};
\draw (8,-3.6) node[circle, fill, inner sep=1.2pt, black]{};
\draw (12,-3.6) node[circle, fill, inner sep=1.2pt, black]{};
\draw (7,-5.4) node[circle, fill, inner sep=1.2pt, red]{};
\draw (10,-5.4) node[circle, fill, inner sep=1.2pt, black]{};
\draw (13,-5.4) node[circle, fill, inner sep=1.2pt, red]{};

\draw (10,0) node[above]{$-2$};
\draw (10,-1.8) node[above]{$-3$};
\draw (8,-3.6) node[above]{$-4$};
\draw (12,-3.6) node[above]{$-2$};
\draw (7,-5.4) node[above]{$-9$};
\draw (10,-5.4) node[above]{$-2$};
\draw (13,-5.4) node[above]{$-2$};

\draw (7.06,-5.4)--(12.94,-5.4) (8,-3.6)--(12,-3.6) ;

\draw[densely dotted] (7,-5.46)--(7,-5.9);

\draw (10,0) node[below]{$e_6-e_7$};
\draw (10,-1.8) node[below]{$e_6+e_7+e_8$};
\draw (8,-3.6) node[below]{$e_1+e_2+e_3+e_5$};
\draw (12,-3.6) node[below]{$e_4-e_5$};
\draw (7,-5.9) node[below]{$e_3-e_4-e_5+e_6+e_7-2e_8$};
\draw (10,-5.4) node[below]{$e_2-e_3$};
\draw (13,-5.4) node[below]{$e_1-e_2$};

\end{tikzpicture}
\caption{Two essentially distinct embeddings of $Q_{X(2,1)}\oplus Q_{X(3,1)} \oplus  Q_{X(7,2)} \oplus  Q_{X(25,3)}$ into $-\mathbb{Z}^{8}$.}
\label{fig:(25,3)}
\end{figure}
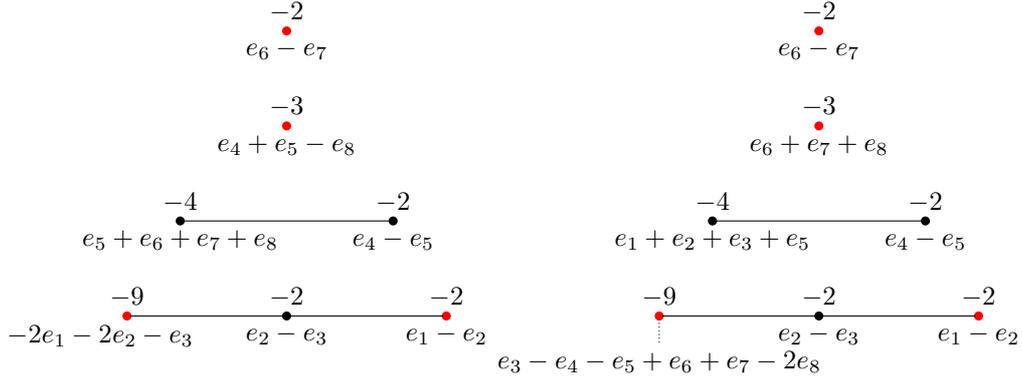  
\end{example}

\begin{example} The two principal conditions from the study of smooth 4-manifolds, one derived from Donaldson's diagonalization theorem and the other from Heegaard Floer theory, are shown to be independent of each other. As demonstrated in Example \ref{example:4771}, the singularity of type $\{(2,1),(3,2),(5,1),(4771,634)\}$ can be obstructed by spin $d$-invariant condition (Corollary \ref{cor:2.10}) but not by the Donaldson's theorem (Corollary \ref{cor:2.6}). Conversely, the singularity of type $\{(2,1),(3,2),(5,4), (43,26)\}$ can be obstructed by Corollary \ref{cor:2.6}, yet it does not meet the obstruction criteria of Corollary \ref{cor:2.10}.    
\end{example}

\section{Main Results}
Let $S$ be a rational homology $\mathbb{CP}^2$ with four cyclic singularities. Suppose that the smooth locus of $S$ is simply-connected and that $K_S$ is ample. As discussed in 
Section \ref{subsec:BMY}, the 4-tuple $(p_1,p_2,p_3,p_4)$ of the orders of the local fundamental groups of the singularities falls into one of the following three cases: 

\begin{enumerate}[label=Case (\arabic*), leftmargin=2cm]
    \item $(2,3,5,n)$, $n\geq 7 \ \mathrm{with} \ (n,30)=1$, 
    \item $(2,3,7,n)$, $n\in \{11,13,17,19,23,25,27,29,31,37,41\}$, 
    \item $(2,3,11,13)$.
\end{enumerate}

\subsection{Results for Case (2) and Case (3) }\label{subsec:cases_(2)_and_(3)}
In this subsection, we aim to demonstrate that the singularity types in Case (2) cannot be realized by a rational homology $\mathbb{CP}^2$. Furthermore, we will show that types in Case (3) are not realizable even in the smooth category. These provide the proof of Theorem \ref{thm:main-1} stated in the Introduction.

Most types in Case (2) and Case (3) are excluded from the algebraic Montgomery-Yang problem, with the exception of one type, as identified by Hwang and Keum. Recall that $L(p,q)$ is orientation-preserving homeomorphic to $L(p',q')$ if and only if $p=p'$ and $q'=q^{\pm 1}\mod p$. We identify $L(p,q)$ with $L(p,q^{-1})$, where $q^{-1}$ is the unique integer with $0<q^{-1}<p$ and $qq^{-1}=1\mod p$. 

For $p=2$, $L(2,1)$ is the only lens space. For $p=3$, there are two: $L(3,1), L(3,2)$; and for $p=7$ there are four: $L(7,1), L(7,2), L(7,3), \text{ and } L(7,6)$. For $p\in \{11,13,17,19,23,25,27,29,31,37,41\}$, the equation $x^2\equiv 1\mod p$ has exactly two different solutions modulo $p$, so there are $\frac{\phi(p)}{2}+1$ distinct lens spaces $L(p,q)$ where $\phi$ is Euler's totient function. It follows that there are exactly $\displaystyle 1\cdot 2\cdot 4\cdot \left(\frac{\phi(n)}{2}+1\right)$ distinct singularity types if $(p_1,p_2,p_3,p_4)=(2,3,7,n)$. Thus there are exactly $1008$ types in Case (2), and a similar calculation shows that there are exactly $84$ types in Case (3).

Among these $1008+84=1092$ types, exactly 24 types satisfy the squareness of $D$, the condition of Proposition \ref{prop:D_square}. These types are given in Table \ref{tab:1} (refer also to \cite[Section 5]{Hwang-Keum-2013}). Furthermore, among these $24$ types, only No.7 fulfills the (strong) orbifold BMY inequality (\ref{eq:oBMY}). This particular type is represented by $\{(2,1),(3,1),(7,6),(19,2)\}$. Consequently, to prove Theorem \ref{thm:main-1} in question, our focus narrows down to this specific type. We demonstrate that this singular type does not conform to the criteria established in Donaldson's diagonalization theorem.

\begin{table}[!th]
    \centering
    \begin{tabular}{c|c||c|c}
      No.  &  $(p_1,q_1), (p_2,q_2), (p_3,q_3), (p_4,q_4)$ & No. & $(p_1,q_1), (p_2,q_2), (p_3,q_3), (p_4,q_4)$ \\
      \hline 
       1 & (2,1), (3,2), (7,2), (11,2) & 13 & (2,1), (3,1), (7,2), (25,12) \\
       2 & (2,1), (3,2), (7,2), (11,7) & 14 & (2,1), (3,2), (7,1), (29,9) \\
       3 & (2,1), (3,1), (7,3), (13,3) & 15 & (2,1), (3,2), (7,2), (29,16) \\
       4 & (2,1), (3,1), (7,3), (13,4) & 16 & (2,1), (3,2), (7,2), (29,23) \\
       5 & (2,1), (3,1), (7,6), (13,12) & 17 & (2,1), (3,1), (7,3), (31,10) \\
       6 & (2,1), (3,1), (7,3), (19,3) & 18 & (2,1), (3,1), (7,6), (31,4) \\
       7 & (2,1), (3,1), (7,6), (19,2) & 19 & (2,1), (3,1), (7,6), (31,5) \\
       8 & (2,1), (3,1), (7,6), (19,8) & 20 & (2,1), (3,1), (7,6), (31,7) \\
       9 & (2,1), (3,1), (7,6), (19,14) & 21 & (2,1), (3,1), (7,2), (37,6) \\
       10 & (2,1), (3,2), (7,2), (23,13) & 22 & (2,1), (3,2), (7,3), (41,23) \\
       11 & (2,1), (3,1), (7,2), (25,6) & 23 & (2,1), (3,2), (11,2), (13,3) \\
       12 & (2,1), (3,1), (7,2), (25,11) & 24 & (2,1), (3,2), (11,2), (13,4) \\
    \end{tabular}
    \caption{All types in Cases (2) and (3) satisfying the squareness of $D$ condition.}
    \label{tab:1}
\end{table}

\begin{theorem}\label{thm:3.1}
    The singularity of type $\{(2,1),(3,1),(7,6),(19,2)\}$ cannot be realized by a rational homology $\mathbb{CP}^2$.
\end{theorem}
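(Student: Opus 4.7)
The plan is to invoke the Donaldson-diagonalization obstruction from Corollary \ref{cor:2.6}. Suppose for contradiction that a rational homology $\mathbb{CP}^2$ of this type exists. Removing cone neighborhoods of its four singular points together with three thickened arcs joining them yields a compact, oriented, smooth $4$-manifold $M$ with $b_2(M)=b_2^+(M)=1$ and $\partial M = L(2,1)\# L(3,1)\# L(7,6)\# L(19,2)$. Setting $X:=X(2,1)\natural X(3,1)\natural X(7,6)\natural X(19,2)$ and applying Donaldson's theorem to the closed, negative definite manifold $X\cup_{\partial}(-M)$, it suffices to rule out every lattice embedding $Q_X\hookrightarrow -\mathbb{Z}^{11}$.

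Computing the Hirzebruch--Jung continued fractions $2/1=[2]$, $3/1=[3]$, $7/6=[2,2,2,2,2,2]$, and $19/2=[10,2]$ shows that $b_2(X)=10$ and presents $Q_X$ as the lattice generated by vectors $u, v, w_1,\dots,w_6, x, y$ with $u\cdot u=-2$, $v\cdot v=-3$, $w_i\cdot w_i=-2$, $w_i\cdot w_{i+1}=1$ for $1\leq i\leq 5$, $x\cdot x=-10$, $y\cdot y=-2$, $x\cdot y=1$, and all remaining inter-block pairings zero. Writing $\{e_1,\dots,e_{11}\}$ for the standard generators of $-\mathbb{Z}^{11}$, every square-$(-2)$ vector has the form $\pm e_i\pm e_j$, the square-$(-3)$ vector $v$ has the form $\pm e_i\pm e_j\pm e_k$, and $x$ must lie in one of the parity-constrained families $\pm 3e_i\pm e_j$, $\pm 2e_{i_1}\pm 2e_{i_2}\pm e_{i_3}\pm e_{i_4}$, $\pm 2e_{i_1}\pm e_{i_2}\pm\cdots\pm e_{i_7}$, or $\pm e_{i_1}\pm\cdots\pm e_{i_{10}}$.

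The heart of the proof is the ensuing case analysis. First I would use the change-of-basis equivalence from Definition \ref{def:unique_embedding} to put $x$ into a canonical representative of each family, and then pin down $y$ from the two relations $x\cdot y=1$ and $y\cdot y=-2$. Next, the relations among the $w_i$'s force them to realize a simple path of length five on the standard basis, so they must use exactly seven basis directions; mutual orthogonality with $x$ and $y$ then restricts where this path can live and, more significantly, dictates how the signs on the edges must be chosen in order to cancel against $x$. Finally, $u$ and $v$ must be placed in the orthogonal complement of $\{x,y,w_1,\dots,w_6\}$, and since $b_2(X)=10$ is only one less than $11$, the room left for $u$ and $v$ is very tight, so their admissible shapes can be enumerated directly.

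The principal obstacle is confirming exhaustiveness of the case analysis, since the four possible supports of $x$ together with the many admissible $-2$-chains produce a long list of sub-cases. A careful application of the symmetries in Definition \ref{def:unique_embedding}, together with a short arithmetic argument ruling out incompatible sign patterns, should reduce the enumeration to a manageable list; in each surviving configuration I expect a contradiction arising from the impossibility of placing $u$ or $v$ orthogonally to everything else while preserving the parity conditions $u\cdot u=-2$ and $v\cdot v=-3$. This will contradict the existence of the embedding $Q_X\hookrightarrow -\mathbb{Z}^{11}$ and prove the theorem.
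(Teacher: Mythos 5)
Your high-level strategy — invoking Corollary~\ref{cor:2.6} and showing that $Q_X$ admits no embedding into $-\mathbb{Z}^{11}$ — is exactly the paper's, and your identification of the lattice generators from the Hirzebruch--Jung expansions $2/1=[2]$, $3/1=[3]$, $7/6=[2^6]$, $19/2=[10,2]$ is correct. But the proof as written has a genuine gap: the case analysis, which is the entire content of the argument, is never actually carried out. You list the four shape-families for $x$, describe how you \emph{would} place $y$, the chain, and $u,v$, and then state that you ``expect a contradiction'' in each surviving configuration. Nothing is proved; a reader cannot check the theorem from the proposal. Worse, your chosen starting point — the square-$(-10)$ generator $x$ — is the least constrained vector in the lattice, so the tree of sub-cases you would have to traverse is by far the largest possible.

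The paper instead begins with the nine square-$(-2)$ generators ($u$, the $A_6$ chain $w_1,\dots,w_6$, and $y$ in your notation), each of which must map to some $\pm e_i\pm e_j$. Orthogonality constraints, reinforced by short mod-$2$ arguments that exclude placements of the form $\pm(e_i+e_j)$ reusing directions already occupied by an orthogonal neighbour, force (up to the equivalence of Definition~\ref{def:unique_embedding}) the rigid configuration $u=e_1-e_2$, $w_k=e_{k+2}-e_{k+3}$ for $k=1,\dots,6$, and $y=e_{10}-e_{11}$, consuming all eleven coordinate directions. Any vector orthogonal to all nine must then have coefficients constant on the blocks $\{e_1,e_2\}$, $\{e_3,\dots,e_9\}$, $\{e_{10},e_{11}\}$ of sizes $2$, $7$, $2$; since the square-$(-3)$ generator $v$ must have exactly three coefficients equal to $\pm1$, no such placement exists, and the proof is finished without ever considering $x$. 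I recommend you adopt this ordering (place the most constrained vectors first) rather than attempt the enumeration starting from $x$, and in any case you must exhibit the contradictions rather than assert them.
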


\begin{proof}
    Let $Y=L(2,1)\#L(3,1)\#L(7,6)\#L(19,2)$ and $X$ be the canonical negative definite plumbed 4-manifold bounded by $Y$, whose plumbing graph is given as in Figure \ref{fig:thm3.1}. By Corollary \ref{cor:2.6}, it is sufficient to demonstrate that $Q_X$ cannot be embedded into the lattice $-\mathbb{Z}^{11}$. 
    
\begin{figure}[!th]
\centering
\begin{tikzpicture}[scale=0.9]
\draw (0,0) node[circle, fill, inner sep=1.2pt, black]{};
\draw (0,-1.4) node[circle, fill, inner sep=1.2pt, black]{};
\draw (-2.5,-2.8) node[circle, fill, inner sep=1.2pt, black]{};
\draw (-1.5,-2.8) node[circle, fill, inner sep=1.2pt, black]{};
\draw (-0.5,-2.8) node[circle, fill, inner sep=1.2pt, black]{};
\draw (0.5,-2.8) node[circle, fill, inner sep=1.2pt, black]{};
\draw (1.5,-2.8) node[circle, fill, inner sep=1.2pt, black]{};
\draw (2.5,-2.8) node[circle, fill, inner sep=1.2pt, black]{};
\draw (-1,-4.2) node[circle, fill, inner sep=1.2pt, black]{};
\draw (1,-4.2) node[circle, fill, inner sep=1.2pt, black]{};

\draw (0,0) node[above]{$-2$};
\draw (0,-1.4) node[above]{$-3$};
\draw (-2.5,-2.8) node[above]{$-2$};
\draw (-1.5,-2.8) node[above]{$-2$};
\draw (-0.5,-2.8) node[above]{$-2$};
\draw (0.5,-2.8) node[above]{$-2$};
\draw (1.5,-2.8) node[above]{$-2$};
\draw (2.5,-2.8) node[above]{$-2$};
\draw (-1,-4.2) node[above]{$-10$};
\draw (1,-4.2) node[above]{$-2$};

\draw (-2.5,-2.8)--(2.5,-2.8) (-1,-4.2)--(1,-4.2) ;

\draw (0,0) node[below]{$v_1$};
\draw (0,-1.4) node[below]{$v_2$};
\draw (-2.5,-2.8) node[below]{$v_3$};
\draw (-1.5,-2.8) node[below]{$v_4$};
\draw (-0.5,-2.8) node[below]{$v_5$};
\draw (0.5,-2.8) node[below]{$v_6$};
\draw (1.5,-2.8) node[below]{$v_7$};
\draw (2.5,-2.8) node[below]{$v_8$};
\draw (-1,-4.2) node[below]{$v_9$};
\draw (1,-4.2) node[below]{$v_{10}$};
\end{tikzpicture}
\caption{The plumbing graph of $X(2,1)\natural X(3,1)\natural X(7,6)\natural X(19,2).$}
\label{fig:thm3.1}
\end{figure}

Suppose there exists an embedding $\iota\colon Q_X\hookrightarrow -\Bbb Z^{11}$. For each vertex $v_i$ in the plumbing graph of $X$, let $v_i$ also denote the corresponding homology class in $H_2(X;\Bbb Z)$ associated with the sphere. 
Since every element in $-\Bbb Z^{11}$ of a square $-2$ can be expressed in the form $\pm e_i \pm e_j$ for some $1\leq i<j\leq 11$, we may assume, after an appropriate change of basis, that $\iota(v_1)=e_1-e_2$.

Next, consider $\iota(v_3)=\pm e_i \pm e_j$ for some $i<j$. If $i=1$, then $\iota(v_3)$ must be $\pm (e_1+e_2)$ because $\iota(v_1)\cdot \iota(v_3)=v_1\cdot v_3=0$. However, this leads to a contradiction when considering modulo 2, as we have $1=v_4\cdot v_3=\iota(v_4)\cdot \iota(v_3)=\iota(v_4) \cdot \pm (e_1+e_2) \equiv \iota(v_4)\cdot (e_1-e_2)=\iota(v_4)\cdot \iota(v_1)=v_4\cdot v_1=0$. Therefore, $i$ must be greater than $1$. If $i=2$, then $0=v_1\cdot v_3=\iota(v_1)\cdot \iota(v_3)=(e_1-e_2)\cdot (\pm e_2 \pm e_j)=\pm 1$, which is again a contradiction. Thus, we deduce that $i>2$, and, after another change of basis, we may assume $\iota(v_3)=e_3-e_4$. 

Now consider $\iota(v_4)$. Write $\iota(v_4)=\pm e_i \pm e_j$ with $i<j$. Following a similar argument as above, we must have $i>2$. Additionally, since $v_3\cdot v_4=1$, it is necessarily that $|\{i,j\}\cap \{3,4\}|=1$. After an appropriate change of basis, we may assume that $\iota(v_4)=e_4-e_5$. Proceeding with this process, we can further assume that $\iota(v_j)=e_j-e_{j+1}$ for $j=3,\dots,8$ and that $\iota(v_{10})=e_{10}-e_{11}$.

Finally, let us consider $\iota(v_2)$. Given its square is $-3$, it follows that $\iota(v_2)=\pm e_i\pm e_j \pm e_k$ for some $i<j<k$. However, it can be easily demonstrated that for any such choices of $i,j,k$, the condition $\iota(v_2)\cdot \iota(v_j)=0$ for all $j\in \{1,3,4,5,6,7,8,10\}$ cannot be satisfied. This leads to a contradiction, implying that such an embedding $\iota$ cannot exist. 
\end{proof}

\begin{remark} In addition to the previously discussed criteria, the type $\{(2,1),(3,1),(7,6),(19,2)\}$ can be also excluded by applying the spin $d$-invariant condition (Corollary \ref{cor:2.10}).
\end{remark}

\subsubsection*{Smooth category}
Within the 1092 types identified in Case (2) and Case (3), an intriguing question arises: Which of these types can be precluded from realizing a simply-connected rational homology $\mathbb{CP}^2$ using only topological and smooth conditions? Remarkably, all 84 types in Case (3) can be excluded by applying criteria from the theory of smooth 4-manifolds. This implies that none of the types in Case (3) are realizable as a simply-connected rational homology $\mathbb{CP}^2$ within the smooth category. Consequently, this resolves the case $(2,3,11,13)$ for the original Montgomery-Yang problem as stated in Conjecture \ref{conj:original_MY}.

\begin{theorem}\label{thm:(2,3,11,13)} The connected sum $L(p_1,q_1)\#\cdots \# L(p_4,q_4)$ of four lens spaces with $(p_1,p_2,p_3,p_4)=(2,3,11,13)$ cannot bound a compact oriented smooth 4-manifold with $b_2=b_2^+=1$ and trivial first integral homology group. 
\end{theorem}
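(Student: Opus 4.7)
The plan is to systematically apply the topological and smooth obstructions developed in Section \ref{sec:conditions} to every singularity type with $(p_1,p_2,p_3,p_4)=(2,3,11,13)$. Counting lens spaces modulo orientation-preserving homeomorphism, there is $1$ choice for $L(2,q_1)$, $2$ for $L(3,q_2)$, $\phi(11)/2+1=6$ for $L(11,q_3)$, and $\phi(13)/2+1=7$ for $L(13,q_4)$, giving a total of $1\cdot 2\cdot 6\cdot 7=84$ candidate types $\{(2,1),(3,q_2),(11,q_3),(13,q_4)\}$. For each such type, I want to rule out the existence of the claimed smooth 4-manifold $M$ with $H_1(M;\mathbb{Z})=0$ and $b_2(M)=b_2^+(M)=1$.

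First I would apply the linking form condition (Proposition \ref{prop:linking_form}), which requires $-q=-(429q_1+286q_2+78q_3+66q_4)$ to be a quadratic residue modulo $858=2\cdot 3\cdot 11\cdot 13$. By the Chinese Remainder Theorem this splits into a quadratic residue condition at each prime factor. A short computation shows that $q_2=2$ is forced, and that $-q_3$ must be a QR mod $11$ and $-q_4$ must be a QR mod $13$. Once we pass to equivalence classes under $q\sim q^{-1}$, only about a dozen candidate types survive.

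For each surviving type I would then try to apply Donaldson's diagonalization theorem (Corollary \ref{cor:2.6}) to the boundary sum $X=X(2,1)\natural X(3,2)\natural X(11,q_3)\natural X(13,q_4)$, by searching for a lattice embedding $Q_X\hookrightarrow -\mathbb{Z}^{n+1}$ in the spirit of the proof of Theorem \ref{thm:3.1}. Any type for which no such embedding exists is eliminated immediately. For types that do admit an embedding, I would then either invoke the Kervaire-Milnor condition (Theorem \ref{thm:2.7}) by tubing base spheres corresponding to a distinguished set of vertices to produce a smoothly embedded characteristic sphere of the wrong self-intersection modulo $16$, as demonstrated in Example \ref{ex:Kervaire-Milnor}, or use the spin $d$-invariant condition (Corollary \ref{cor:2.10}) computed via the recursion \eqref{eq:d-invariant}.

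The main obstacle will be the sheer bookkeeping: for each remaining type one must classify lattice embeddings of $Q_X$ into $-\mathbb{Z}^{n+1}$ up to the equivalence of Definition \ref{def:unique_embedding}, and then verify the appropriate secondary obstruction. Since the examples in Section \ref{subsec:independence} show that no single one of the three smooth conditions suffices in general, different candidate types will need different obstructions, and the argument is therefore most naturally organized as a table recording, for each of the dozen surviving types, which of Corollaries \ref{cor:2.6}, \ref{cor:2.10}, or Theorem \ref{thm:2.7} does the obstructing. The enumeration of vectors of fixed small square in $-\mathbb{Z}^{n+1}$ with prescribed intersection pattern can be mechanized, making the verification essentially a finite check.
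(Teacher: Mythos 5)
Your proposal follows essentially the same strategy as the paper: enumerate the $84$ candidate types, use the linking form condition (Proposition \ref{prop:linking_form}) reduced via the Chinese Remainder Theorem to force $q_2=2$ and cut the list to $12$ types with $q_3\in\{2,7,10\}$ and $q_4\in\{1,3,4,12\}$, eliminate all but $(q_1,q_2,q_3,q_4)=(1,2,2,1)$ by the non-embeddability criterion of Corollary \ref{cor:2.6}, and handle the last case by exhibiting the essentially unique embedding and invoking the Kervaire-Milnor obstruction (Theorem \ref{thm:2.7}) as in Example \ref{ex:Kervaire-Milnor}. The only difference is that the paper carries out one representative Donaldson computation explicitly rather than tabulating all twelve, but the underlying method and the obstructions deployed are the same.
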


\begin{proof} The 84 possible combinations of the $q_i$'s are given by 
\[
    (q_1,q_2,q_3,q_4)\in \{1\}\times \{1,2\}\times \{1,2,3,5,7,10\} \times \{1,2,3,4,5,6,12\},
\] 
and among which precisely $12$ cases satisfy the linking form condition (Proposition \ref{prop:linking_form}). These cases are \[
(q_1,q_2,q_3,q_4)\in \{1\}\times \{2\}\times \{2,7,10\} \times \{1,3,4,12\}.
\]
All of these 12 cases, with the exception of $(q_1,q_2,q_3,q_4)=(1,2,2,1)$, can be excluded based on Donaldson's diagonlization theorem (Corollary \ref{cor:2.6}).The reasoning behind these exclusions is similar to the argument presented in the proof of Theorem \ref{thm:3.1}. Therefore, we will specifically focus on the case $(q_1,q_2,q_3,q_4)=(1,2,2,3)$. In this scenario, the boundary sum $X:=X(2,1)\natural X(3,2)\natural X(11,2)\natural X(13,3)$ is represented by the plumbing diagram shown in Figure \ref{fig:thm3.2-(1)}.

\begin{figure}[!th]
\centering
\begin{tikzpicture}[scale=0.8]
\draw (0,0) node[circle, fill, inner sep=1.2pt, black]{};
\draw (-1,-1.4) node[circle, fill, inner sep=1.2pt, black]{};
\draw (1,-1.4) node[circle, fill, inner sep=1.2pt, black]{};
\draw (-1,-2.8) node[circle, fill, inner sep=1.2pt, black]{};
\draw (1,-2.8) node[circle, fill, inner sep=1.2pt, black]{};
\draw (-1.5,-4.2) node[circle, fill, inner sep=1.2pt, black]{};
\draw (0,-4.2) node[circle, fill, inner sep=1.2pt, black]{};
\draw (1.5,-4.2) node[circle, fill, inner sep=1.2pt, black]{};
\draw (0,0) node[above]{$-2$};
\draw (-1,-1.4) node[above]{$-2$};
\draw (1,-1.4) node[above]{$-2$};
\draw (-1,-2.8) node[above]{$-6$};
\draw (1,-2.8) node[above]{$-2$};
\draw (-1.5,-4.2) node[above]{$-5$};
\draw (0,-4.2) node[above]{$-2$};
\draw (1.5,-4.2) node[above]{$-2$};

\draw (-1,-1.4)--(1,-1.4) (-1,-2.8)--(1,-2.8)  (-1.5,-4.2)--(1.5,-4.2);

\draw (0,0) node[below]{$v_1$};
\draw (-1,-1.4) node[below]{$v_2$};
\draw (1,-1.4) node[below]{$v_3$};
\draw (-1,-2.8) node[below]{$v_4$};
\draw (1,-2.8) node[below]{$v_5$};
\draw (-1.5,-4.2) node[below]{$v_6$};
\draw (0,-4.2) node[below]{$v_7$};
\draw (1.5,-4.2) node[below]{$v_8$};
\end{tikzpicture}
\caption{The plumbing graph of $X(2,1)\natural X(3,2)\natural X(11,2)\natural X(13,3).$}
\label{fig:thm3.2-(1)}
\end{figure}
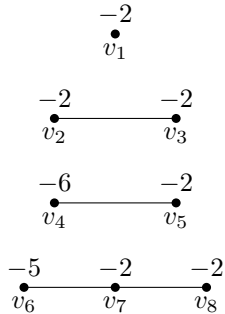

For each vertex $v_i$ in the plumbing graph of $X$, we also denote the corresponding homology class as $v_i$ in $H_2(X;\Bbb Z)$. Suppose that there exists an embedding $\iota:Q_X\hookrightarrow -\Bbb Z^9$. Since every element of square $-2$ in $-\Bbb Z^9$  can be expressed as $\pm e_i \pm e_j$ for some $1\leq i<j\leq 9$, after a change of basis, we may assume that $\iota(v_1)=e_1-e_2$. 

Next, let $\iota(v_2)=\pm e_i \pm e_j$ for some $i<j$. If $i=1$, then $\iota(v_2)$ must be $\pm(e_1+e_2)$ since $v_1\cdot v_2=0$. However, this leads to a contradiction, as $1=v_3\cdot v_2=\iota(v_3)\cdot \iota(v_2)= \iota(v_3)\cdot \iota(v_1)=v_3\cdot v_1=0$ modulo $2$. If $i=2$, then $0=v_1\cdot v_2=\iota(v_1)\cdot \iota(v_2)=\pm 1$, which is again a contradiction. Therefore, $i>2$, and after a change of basis, we may assume that $\iota(v_2)=e_3-e_4$. 

Now, let $\iota(v_3)=\pm e_i \pm e_j$ with $i<j$. As previously, $i>2$ must hold. Additionally, since $v_2\cdot v_3=1$, we must have $|\{i,j\} \cap \{3,4\}|=1$. Therefore, after a change of basis, we may assume that $\iota(v_3)=e_4-e_5$.

Proceeding similarly, let $\iota(v_5)=\pm e_i \pm e_j$ $(i<j)$. We should have $i>5$, so we may assume $\iota(v_5)=e_6-e_7$. Likewise, for  $\iota(v_7)=\pm e_i \pm e_j$ $(i<j)$ with $i>7$, we may assume $\iota(v_7)=e_8-e_9$. The only possibility for $\iota(v_8)$ then becomes $-(e_8+e_9)$, but this results in a contradiction: $1=v_6\cdot v_7=\iota(v_6)\cdot \iota(v_7)=\iota(v_6)\cdot \iota(v_8)=v_6\cdot v_8=0$ modulo $2$. We conclude that such an embedding $\iota:Q_X\hookrightarrow -\Bbb Z^9$ cannot exist.

Finally, the only remaining case to consider is: 
\[
    (q_1,q_2,q_3,q_4)=(1,2,2,1).
\]
We will demonstrate that this case adheres to Donaldson's theorem, yet it can be obstructed by applying the Kervaire-Milnor theorem (Theorem \ref{thm:2.7}). The boundary sum \[X':=X(2,1)\natural X(3,2)\natural X(11,2)\natural X(13,1)\] is represented by the plumbing description shown in Figure \ref{fig:thm3.2-(2)}. We claim that there exists an essentially unique embedding $\iota:Q_{X'}\hookrightarrow -\Bbb Z^7$, as defined in Definition \ref{def:unique_embedding}. Assuming the existence of such an embedding, and paralleling the approach in the case $(q_1,q_2,q_3,q_4)=(1,2,2,3)$, we may assume that $\iota(v_1)=e_1-e_2$, $\iota(v_2)=e_3-e_4$, $\iota(v_3)=e_4-e_5$, $\iota(v_5)=e_6-e_7$. 

Next, we consider $\iota(v_4)$. Given the orthogonality conditions $v_4\cdot v_1=v_4\cdot v_2=v_4\cdot v_3=0$, and representing $\iota(v_4)$ as $\sum_{t=1}^7 a_te_t$, it becomes apparent that $\iota(v_4)$ cannot be of the form $\pm 2e_i \pm e_j \pm e_k$. Instead, it must be of the form $\pm e_{i_1}\pm \cdots \pm e_{i_6}$. The constraints $a_1=a_2$ and $a_3=a_4=a_5$, coupled with the condition $v_4\cdot v_5=1$, uniquely determine $\iota(v_4)=e_1+e_2+e_3+e_4+e_5-e_6$ up to the equivalence relation of Definition \ref{def:unique_embedding}.

For $\iota(v_6)=\sum_{t=1}^7 b_te_t$, the orthogonality with $v_1,v_2,v_3,v_5$ leads to the conditions $b_1=b_2$, $b_3=b_4=b_5$, and $b_6=b_7$. Furthermore, the condition $v_6\cdot v_4=0$ implies $b_1+b_2+b_3+b_4+b_5=b_6$. These conditions uniquely determine $\iota(v_6)=2e_1+2e_2-e_3-e_4-e_5+e_6+e_7$, again up to the equivalence relation of Definition \ref{def:unique_embedding}.

This establishes the existence of an essentially unique lattice embedding $\iota:Q_{X'}\hookrightarrow -\Bbb Z^7$. The application of the Kervaire-Milnor theorem to this case is further elucidated in Example \ref{ex:Kervaire-Milnor}. 
\end{proof}

\begin{figure}[!th]
\centering
\begin{tikzpicture}[scale=0.8]
\draw (0,0) node[circle, fill, inner sep=1.2pt, black]{};
\draw (-1,-1.4) node[circle, fill, inner sep=1.2pt, black]{};
\draw (1,-1.4) node[circle, fill, inner sep=1.2pt, black]{};
\draw (-1,-2.8) node[circle, fill, inner sep=1.2pt, black]{};
\draw (1,-2.8) node[circle, fill, inner sep=1.2pt, black]{};
\draw (0,-4.2) node[circle, fill, inner sep=1.2pt, black]{};
\draw (0,0) node[above]{$-2$};
\draw (-1,-1.4) node[above]{$-2$};
\draw (1,-1.4) node[above]{$-2$};
\draw (-1,-2.8) node[above]{$-6$};
\draw (1,-2.8) node[above]{$-2$};
\draw (0,-4.2) node[above]{$-13$};

\draw (-1,-1.4)--(1,-1.4) (-1,-2.8)--(1,-2.8) ;

\draw (0,0) node[below]{$v_1$};
\draw (-1,-1.4) node[below]{$v_2$};
\draw (1,-1.4) node[below]{$v_3$};
\draw (-1,-2.8) node[below]{$v_4$};
\draw (1,-2.8) node[below]{$v_5$};
\draw (0,-4.2) node[below]{$v_6$};
\end{tikzpicture}
\caption{The plumbing graph of $X(2,1)\natural X(3,2)\natural X(11,2)\natural X(13,1).$}
\label{fig:thm3.2-(2)}
\end{figure}
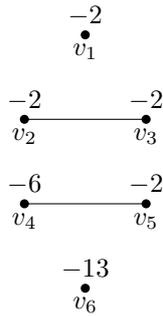
\begin{remark} The type $\{(2,1),(3,2),(11,2),(13,1)\}$ can be also excluded by the spin $d$-invariant condition (Corollary \ref{cor:2.10}).
\end{remark}

Next, we consider 1008 types in Case (2). Initially, after applying the linking form condition (Proposition \ref{prop:linking_form}), we narrow these down to 128 types. Among this subset, only 35 types meet the criteria set forth in Donaldson's theorem (Corollary \ref{cor:2.6}). Further refinement is achieved by employing Kervaire-Milnor (Theorem \ref{thm:2.7}) and Proposition \ref{prop:2.8}, which reduces the number to 21 types. Finally, by incorporating the spin $d$-invariant condition (Corollary \ref{cor:2.10}) into our analysis, we arrive at a conclusive set of 13 types, as detailed in Table \ref{tab:2}.
\begin{table}[t]
    \centering
    \begin{tabular}{c|c}
      No.  &  $(p_1,q_1), (p_2,q_2), (p_3,q_3), (p_4,q_4)$ \\
      \hline 
       1 & (2,1), (3,2), (7,1), (11,2)  \\
       2 & (2,1), (3,1), (7,3), (19,2)  \\
       3 & (2,1), (3,2), (7,1), (23,2)  \\
       4 & (2,1), (3,2), (7,1), (23,4)  \\
       5 & (2,1), (3,2), (7,2), (23,3) \\
       6 & (2,1), (3,1), (7,2), (25,2)  \\
       7 & (2,1), (3,2), (7,1), (29,4) \\
       8 & (2,1), (3,2), (7,2), (29,5) \\
       9 & (2,1), (3,1), (7,3), (31,2) \\
       10 & (2,1), (3,1), (7,3), (31,4)  \\
       11 & (2,1), (3,1), (7,2), (37,2) \\
       12 & (2,1), (3,1), (7,2), (37,8) \\
       13 & (2,1), (3,1), (7,2), (37,13) 
   \end{tabular} 
    \caption{All types in Case (2) satisfying topological and smooth conditions.}
    \label{tab:2}
\end{table}

\begin{theorem} The connected sum $L(p_1,q_1)\#\cdots \# L(p_4,q_4)$ of four lens spaces with $(p_1,p_2,p_3,p_4)=(2,3,7,n)$ and $n<43$, cannot bound a compact oriented smooth 4-manifold with $b_2=b_2^+=1$ and trivial first integral homology group, with the potential exception of the 13 cases enumerated in Table \ref{tab:2}.
\end{theorem}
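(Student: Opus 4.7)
The plan is to exhaust all 1008 singularity types in Case (2) by successively applying the necessary conditions established in Section \ref{sec:conditions}, mirroring the strategy used for Case (3) in Theorem \ref{thm:(2,3,11,13)} but on a much larger enumeration. First, I would enumerate all types: for each $n \in \{11,13,17,19,23,25,27,29,31,37,41\}$ there are $1\cdot 2\cdot 4\cdot (\phi(n)/2+1)$ types, totaling $1008$, and I would list them with their Hirzebruch--Jung continued fraction expansions of $p_i/(p_i-q_i)$ so that each $X(p_i,q_i)$ is explicitly given as a linear plumbing.

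Next, I would apply the linking form condition (Proposition \ref{prop:linking_form}): for each type compute $q = q_1p_2p_3p_4 + p_1q_2p_3p_4 + p_1p_2q_3p_4 + p_1p_2p_3q_4$ and test whether $-q$ is a quadratic residue modulo $42n$ using the Chinese Remainder Theorem and quadratic reciprocity. This step, verified by a short computer loop, is expected to cut the list from $1008$ to $128$ types. Then I would impose the condition from Donaldson's diagonalization theorem (Corollary \ref{cor:2.6}): for each surviving type, attempt to embed $\bigoplus_{i=1}^4 Q_{X(p_i,q_i)}$ into $-\mathbb{Z}^{n+1}$ (with $n=\sum b_2(X(p_i,q_i))$) using exactly the vertex-by-vertex elimination argument shown in the proofs of Theorem \ref{thm:3.1} and Theorem \ref{thm:(2,3,11,13)}: each $(-2)$-vertex forces a form $\pm e_i \pm e_j$, and the intersection relations successively constrain or forbid the choices. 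For the non-obstructed types, this also produces an essentially unique embedding (or a short list of embeddings) in the sense of Definition \ref{def:unique_embedding}, which is needed for the next steps. This is expected to reduce $128$ down to $35$.

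For the remaining $35$ types, I would apply two further smooth obstructions in parallel. First, the Kervaire--Milnor condition (Theorem \ref{thm:2.7}): using each essentially unique embedding, identify a set of disjoint base $2$-spheres whose tubed sum represents a characteristic class $[\Sigma]$ in the negative definite closure $W = X\cup_\partial (-M)$, and test $[\Sigma]^2 \equiv \sigma(W) \pmod{16}$, as done in Example \ref{ex:Kervaire-Milnor}. Second, the orthogonal complement argument (Proposition \ref{prop:2.8}): compute a generator of $\iota(Q_X)^\perp \subset -\mathbb{Z}^{n+1}$, compare its self-intersection to $-p_1p_2p_3p_4$, and rule out types where the index $[\iota(Q_X)^\perp : \iota(Q_{-M})]$ is forced to be nontrivial, as in Example \ref{ex:index}. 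Together these should prune the list from $35$ to $21$. Finally, I would apply the spin $d$-invariant condition (Corollary \ref{cor:2.10}): since $p_1p_2p_3p_4 = 42n$ is always even, the filling $M$ admits a spin structure, so there must exist a choice of spin structures $\mathfrak{s}_i$ on each $L(p_i,q_i)$ with $\sum_{i=1}^4 d(L(p_i,q_i),\mathfrak{s}_i) = -\tfrac{1}{4}$. For odd $p_i$ the spin structure is unique, while $L(2,1)$ has two spin structures; applying the recursive formula \eqref{eq:d-invariant} to each candidate and testing all resulting sums should eliminate the remaining $8$ types, leaving exactly the $13$ listed in Table \ref{tab:2}.

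The main obstacle is organizational rather than conceptual: the full check involves a few thousand individual embedding searches, and writing the lattice embedding step in a form that is both correct and reproducible requires careful bookkeeping. I would therefore implement the enumeration and the embedding search on a computer (as a systematic backtracking over assignments $\iota(v_{i,j}) = \pm e_a \pm e_b$ or $\pm e_a \pm e_b \pm e_c$, constrained by the plumbing intersection form), and then present for each of the $1008-13=995$ excluded types a concise certificate: either a failing Legendre symbol, an impossibility in the embedding tree, a mod-16 violation, a nontrivial index in the orthogonal complement, or a mismatched sum of $d$-invariants. The $13$ surviving types must then simply be reported, together with an essentially unique embedding for each as evidence that none of the obstructions in Section \ref{sec:conditions} apply.
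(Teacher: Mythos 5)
Your proposal reproduces the paper's argument step for step: enumerate the $1008$ types, cut to $128$ via the linking form condition, to $35$ via Donaldson's diagonalization, to $21$ via Kervaire--Milnor together with the orthogonal-complement argument, and finally to the $13$ surviving types via the spin $d$-invariant condition. This matches the paper's (largely computational) proof exactly, including the intermediate counts, so there is nothing substantive to add.
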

Note that if $Y:=L(p_1,q_1)\# \cdots \# L(p_4,q_4)$ with $(p_1,p_2,p_3,p_4)=(2,3,7,n)$ bounds such a 4-manifold, then it is necessary that  $(n,42)=1$; otherwise $H_1(Y;\mathbb{Z})$ would not be cyclic, and this would contradict Lemma \ref{lem:linking_form} (1).

An interesting observation arises from the comparison of Table \ref{tab:1} and Table \ref{tab:2}: there are no common types between these two lists. Specifically, for the 24 types listed in Table \ref{tab:1}, Types No.11 and No.12 fail to satisfy the linking form condition (Proposition \ref{prop:linking_form}), while the remaining 22 types do not meet the criteria of Donaldson's diagonalization theorem (Corollary \ref{cor:2.6}). This outcome suggests that it is quite rare for a singularity type to simultaneously navigate through the obstructions posed by both algebraic geometry and smooth 4-manifold theory. In Case (1) where $(p_1,p_2,p_3,p_4)=(2,3,5,n)$, this rarity has been confirmed for small values of $n$ in the next section.

\subsection{Results for Case (1)}
Now, we turn our attention to the case $(p_1,p_2,p_3,p_4)=(2,3,5,n)$. According to \cite[Lemma 5.3]{Hwang-Keum-2013}, it is required that $q_2=2$ in this case.

\subsubsection{Experimental results}\label{subsec:experimental_result}
In Section \ref{subsec:cases_(2)_and_(3)}, we discussed Case (2) and Case (3), notably observing the lack of overlapping types between the lists in Table \ref{tab:1} and Table \ref{tab:2}. This observation suggests that the types of singularities that simultaneously satisfy conditions from both algebraic geometry and topological/smooth 4-manifolds theory are extremely rare, perhaps even non-existent. To further investigate this phenomenon, particularly in the context of Case (1), we developed a computer program to systematically verify whether a given type of singularity fulfills all the conditions outlined in Section \ref{sec:conditions}. A key aspect of our program involves using the \texttt{OrthogonalEmbeddings} function from \cite{GAP} to validate the Donaldson condition.

We conducted a thorough verification of our conditions for all singularities of type $(2,3,5,p_4)$ with $p_4<50000$. Although there were types that could not be excluded by any of the conditions discussed in this paper, such instances appear to be exceedingly rare. For example, out of more than $4.7\times 10^8$ types\footnote{Precisely, there are $3\times 158,353,370=475,060,110$ types.} of $\{(2,1), (3,2), (5,q_3),(p_4, q_4)\}$ with $p_4<50000$, $q_3=1,2$ or $4$, $(p_4, 30)=1$, and $(p_4, q_4)=1$ up to $q_4q_4'\equiv 1\mod{p_4}$, only $9767$ types\footnote{2579, 4146, and 3042 types for $q_3=1$, $2$, and $4$, respectively.} satisfy the orbifold BMY inequality, the perfect squareness of $D$, and the linking form condition. The 16 types listed in Table \ref{tab:3} are the only survivors after the application of further smooth conditions, such as Donaldson's theorem and the spin $d$-invariant condition. Since the first occurrence of such a type is observed at $p_4=2599$, this leads to Theorem \ref{thm:experimental}.

\begin{table}[t]
\centering
\begin{tabular}{c|c||c|c}
       No.  &  $(p_1,q_1), (p_2,q_2), (p_3,q_3), (p_4,q_4)$ & No. & $(p_1,q_1), (p_2,q_2), (p_3,q_3), (p_4,q_4)$ \\ \hline
       1 & (2,1), (3,2), (5,1), (2599,1384) & 9 & (2,1), (3,2), (5,1), (26869,14314) \\
       2 & (2,1), (3,2), (5,2), (2623,821) & 10 & (2,1), (3,2), (5,1), (27289,3616) \\
       3 & (2,1), (3,2), (5,2), (5203,1651) & 11 & (2,1), (3,2), (5,1), (31309,19161) \\
       4 & (2,1), (3,2), (5,1), (6049,3866) & 12 & (2,1), (3,2), (5,1), (32149,18482) \\
       5 & (2,1), (3,2), (5,2), (9607,946) & 13 & (2,1), (3,2), (5,2), (37837,3192) \\
       6 & (2,1), (3,2), (5,2), (12727,1884) & 14 & (2,1), (3,2), (5,1), (44161,27733) \\
       7 & (2,1), (3,2), (5,2), (17833,4898) & 15 & (2,1), (3,2), (5,2), (44407,11507) \\
       8 & (2,1), (3,2), (5,2), (26473,7271) & 16 & (2,1), (3,2), (5,4), (47929,9960)
    \end{tabular}         
    \caption{All singularities of type $(2,3,5,p_4)$ with $p_4<50000$ satisfying all conditions discussed in this paper.}
    \label{tab:3}    
\end{table}

This result prompts us to conjecture that there may only be a finite number of singularity types in the case $(p_1,p_2,p_3,p_4)=(2,3,5,n)$ that are not precluded by the techniques we have employed. However, in contrast to this conjecture, we have identified an infinite family of singularity types, as detailed in Proposition \ref{prop:mysterious}, that surprisingly cannot be excluded by any of the conditions discussed in this paper.

\subsubsection{The orbifold BMY inequality and Donaldson's theorem}\label{subsec:BMY_and_Donaldson}
In this subsection, we explore a methodology for identifying extensive, infinite families of singularity types with the 4-tuple \[(p_1,p_2,p_3,p_4)=(2,3,5,n)\] that are not precluded by conditions rooted in algebraic geometry, such as the orbifold BMY inequality, yet are readily obstructed by the criteria set forth in Donaldson's diagonalization theorem. This analysis sheds light on the distinct ways these singularity types interact with different mathematical frameworks.

We will now present an explicit methodology for identifying such infinite families of singularity types. We begin with a calculation of $K_S^2$ for the types in Case (1). Consider $S$, a simply-connected rational homology $\mathbb{CP}^2$ whose canonical divisor $K_S$ is ample, having four cyclic singularities of type $\{(2,1),(3,2),(5,q_3),(p_4,q_4)\}$. For $q_3$, we have three possible choices: $q_3=1,2,\text{ or }4$. Note that the cases $q_3=2$ and $q_3=3$ are equivalent, as $L(5,2)$ and $L(5,3)$ are orientation-preserving homeomorphic. Writing $\displaystyle p_4/(p_4-q_4)=[n_1,\dots,n_\ell]$, we can then apply the formula (\ref{eq:KS_square}) from Section \ref{subsec:computation_of_K_S^2} to compute $K_S^2$.
\[
    K_S^2=\begin{cases} \displaystyle\sum_{j=1}^\ell n_j -3\ell  +\frac{10}{3} - \frac{q_4+q_4^{-1}-2}{p_4}, & \textrm{if $q_3=1$}, \\
 \displaystyle\sum_{j=1}^\ell n_j -3\ell  +\frac{86}{15} - \frac{q_4+q_4^{-1}-2}{p_4}, & \textrm{if $q_3=2$}, \\
     \displaystyle\sum_{j=1}^\ell n_j -3\ell  +\frac{122}{15} - \frac{q_4+q_4^{-1}-2}{p_4}, & \textrm{if $q_3=4$}.
\end{cases} 
\]
On the other hand, we have $0<K_S^2<3e_{\textrm{orb}}(S)=\displaystyle\frac{1}{10}+\frac{3}{p_4}$ by Theorem \ref{thm:oBMY} (1), and it is evident that $0\leq \displaystyle\frac{q_4+q_4^{-1}-2}{p_4}<2$. Then, it easily follows that 
\[ 
\displaystyle\sum_{j=1}^\ell n_j - 3\ell = \begin{cases}
    \textrm{$-2$ or $-3$}, & \textrm{if $q_3=1$}, \\
    \textrm{$-4$ or $-5$}, & \textrm{if $q_3=2$}, \\
    \textrm{$-7$ or $-8$}, & \textrm{if $q_3=4$}.
\end{cases}
\]
Now suppose $p_4/q_4=[m_1,\dots,m_t]$. By combining this result with Lemma \ref{lem:3.3} below, we obtain \[ 
\displaystyle\sum_{i=1}^t m_i - 3t = \begin{cases}
    \textrm{0 or 1}, & \textrm{if $q_3=1$}, \\
    \textrm{2 or 3}, & \textrm{if $q_3=2$}, \\
    \textrm{5 or 6}, & \textrm{if $q_3=4$},
\end{cases}
\] 
and \begin{equation}\label{eq:c_and_KS^2}
    K_S^2=c-\frac{q_4+q_4^{-1}-2}{p_4},~\textrm{where }c=\begin{cases}
    \frac{20}{15}, & \textrm{if $\sum_i m_i -3t=0$ (and $q_3=1$)}, \\
    \frac{5}{15}, & \textrm{if $\sum_i m_i -3t=1$ (and $q_3=1$)}, \\
    \frac{26}{15}, & \textrm{if $\sum_i m_i -3t=2$ (and $q_3=2$)}, \\
    \frac{11}{15}, & \textrm{if $\sum_i m_i -3t=3$ (and $q_3=2$)}, \\
    \frac{17}{15}, & \textrm{if $\sum_i m_i -3t=5$ (and $q_3=4$)}, \\
    \frac{2}{15}, & \textrm{if $\sum_i m_i -3t=6$ (and $q_3=4$)}.
    \end{cases}
\end{equation}

\begin{lemma}[{\cite[Lemma 2.6]{Lisca-2007}}]\label{lem:3.3} For relatively prime integers $p>q>0$, suppose that \[
\frac{p}{p-q}=[n_1,\dots,n_\ell] ~~\textrm{and}\ \ ~~\frac{p}{q}=[m_1,\dots,m_t].
\]
If $\sum_{j=1}^\ell n_j =3\ell -k$, then $\sum_{i=1}^t m_i =3t+k-2$.    
\end{lemma}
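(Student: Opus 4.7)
The plan is to reduce Lemma \ref{lem:3.3} to the symmetric identity
\begin{equation}\label{eq:sym}
\sum_{i=1}^t m_i + \sum_{j=1}^\ell n_j \;=\; 3(t+\ell) - 2.
\end{equation}
Indeed, substituting $\sum_j n_j = 3\ell - k$ into \eqref{eq:sym} immediately yields $\sum_i m_i = 3(t+\ell)-2-(3\ell-k) = 3t + k - 2$, which is precisely the conclusion of the lemma. So it suffices to prove \eqref{eq:sym} for every coprime pair $(p,q)$ with $0 < q < p$.

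I would prove \eqref{eq:sym} by induction on $p$. The base case $p = 2$, $q = 1$ is immediate: both Hirzebruch-Jung expansions are $[2]$, so $\ell = t = 1$ and both sides equal $4$. For the inductive step, I use the dichotomy $n_1 = 2 \iff p \geq 2q$ versus $m_1 = 2 \iff p \leq 2q$, which covers every coprime pair and overlaps only at the base case. Without loss of generality assume $p > 2q$, so $n_1 = 2$ and $m_1 \geq 3$; the opposite case is handled symmetrically by exchanging the roles of $q$ and $p-q$.

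In this case, the identity $p/(p-q) = 2 - 1/[n_2,\ldots,n_\ell]$ shows that $[n_2,\ldots,n_\ell]$ is the Hirzebruch-Jung expansion of $(p-q)/(p-2q)$, so setting $p' = p-q$ and $q' = q$, this tail is the expansion of $p'/(p'-q')$. Meanwhile, $(p-q)/q = p/q - 1$ gives $p'/q' = [m_1 - 1, m_2, \ldots, m_t]$, a valid Hirzebruch-Jung expansion because $m_1 - 1 \geq 2$. Applying the inductive hypothesis \eqref{eq:sym} to the pair $(p', q')$ with $\ell' = \ell - 1$ and $t' = t$ yields
\[
\left(\sum_{i=1}^t m_i - 1\right) + \left(\sum_{j=1}^\ell n_j - 2\right) \;=\; 3\bigl(t + (\ell - 1)\bigr) - 2,
\]
which rearranges to the desired \eqref{eq:sym} for $(p,q)$.

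The main obstacle is identifying the correct dual reduction move: shortening $[n_1,\ldots,n_\ell]$ to $[n_2,\ldots,n_\ell]$ on one side must correspond to a valid move on the dual side, and this requires the strict inequality $p > 2q$ in order to guarantee $m_1 \geq 3$. This is precisely where the case split is essential. A more conceptual alternative would be to invoke Riemenschneider's point-diagram duality for cyclic quotient singularities, which encodes \eqref{eq:sym} as a combinatorial staircase-bijection between the ``$2$-excesses'' $(n_j - 2)$ and $(m_i - 2)$ and bypasses the case analysis entirely.
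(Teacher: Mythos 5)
Your proof is correct. The paper itself offers no argument for this lemma; it is stated as a citation to \cite{Lisca-2007}, so there is no in-paper proof to compare against. Your self-contained proof is sound: the reformulation as the symmetric identity $\sum_i m_i + \sum_j n_j = 3(t+\ell)-2$ is the clean normalization, and the induction on $p$ works. The dichotomy ($n_1=2 \iff p\geq 2q$, $m_1=2\iff p\leq 2q$, with overlap only at the base pair $(p,q)=(2,1)$ since $\gcd(p,q)=1$) covers every coprime pair; when $p>2q$, the pair $(p',q')=(p-q,q)$ satisfies $0<q'<p'<p$ with $\gcd(p',q')=1$, the tail $[n_2,\dots,n_\ell]$ is the Hirzebruch--Jung expansion of $p'/(p'-q')$, and $p'/q'=[m_1-1,m_2,\dots,m_t]$ is a valid expansion of the same length because $p>2q$ forces $m_1\geq 3$; the bookkeeping $(\sum m_i-1)+(\sum n_j-2)=3\bigl(t+(\ell-1)\bigr)-2$ then returns the identity for $(p,q)$. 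The remaining case $p<2q$ follows, as you say, from the manifest $q\leftrightarrow p-q$ symmetry of the identity, which swaps the two continued fractions. Your closing remark is apt: the identity is precisely the numerical shadow of Riemenschneider's point-diagram duality, where the staircase bijection between the $2$-excesses $(n_j-2)$ and $(m_i-2)$ gives the same statement without any case analysis.
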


\begin{remark} For a singularity of type $\{(2,1),(3,2),(5,q_3),(p_4,q_4)\}$, consider the plumbing diagram corresponding to $X:=X(2,1)\amalg X(3,2)\amalg X(5,q_3)\amalg X(p_4,q_4)$. Then we define $I(X)$ as the sum of the negative weights minus three times the length. From the above results, it follows that \[
I(X)= \begin{cases}
    \textrm{$-1$ or 0}, & \textrm{if $q_3=1$}, \\
    \textrm{$-2$ or $-1$}, & \textrm{if $q_3=2$ or 4}.
\end{cases}
\]
Also, the negative definite lattice $Q_X$, a direct sum of four linear lattices, must be embedded into the standard negative definite lattice $-\Bbb Z^N$ of codimension one. This concept aligns closely with Lisca's work \cite{Lisca-2007}, which classifies codimension \textit{zero} embeddings of linear lattices - specifically, the intersection forms of $X(p,q)$ into $-\Bbb Z^N$ on the assumption that $I(X(p,q))<0$. (For a more general treatment of codimension zero embeddings of direct sums of linear lattices, see also \cite{Lisca-2007-2}.) As implicitly shown in \cite[Section 7]{Lisca-2007}, if $I(X(p,q))<0$ and there exists a codimension zero embedding  $Q_{X(p,q)}\hookrightarrow -\Bbb Z^{b_2(X(p,q))}$, (considering $q^{-1}$ instead of $q$ if necessary) there exists a sequence  $(p(k),q(k))$ $(k=0,1,\dots)$ with $(p(0),q(0))=(p,q)$ such that $Q_{X(p(k),q(k))}$ embeds into the standard negative definite lattice as a codimension zero sublattice for each $k$. Moreover, it satisfies 
\begin{equation}\label{eq:family_of_(p_4,q_4)}
    \frac{p(k)}{q(k)}=\left[a_1,\dots,a_\ell, [2]^k,c_1,\dots,c_s,m+k, b_1,\dots,b_t\right],
\end{equation}
where $a_i, b_i,c_i$'s and $m$ are constants independent of $k$. 
\end{remark}

The forthcoming lemma, along with the subsequent remark, demonstrates that for a family of $(p_4,q_4)$'s characterized by form  (\ref{eq:family_of_(p_4,q_4)}), the value of $K_S^2$ of the type $\{(2,1),(3,2),(5,q_3),(p_4,q_4)\}$ approaches a rational number as $k$ tends to infinity. This limiting rational number is dependent solely on the constants $a_i$'s and the $b_i$'s.

\begin{lemma}\label{lem:3.4} 
Let $p>q>0$ be relatively prime integers such that 
\[
\frac{p}{q}=\left[a_1,\dots,a_\ell, [2]^k,c_1,\dots,c_s,m+k, b_1,\dots,b_t\right],
\]
where $a_i, b_i,c_i$'s and $m$ are constants $\geq 2$, $a_\ell>2$, and $\ell,s,t, k\geq 0$. Then the integers $p, q, q^{-1}$ are of the form $p=d_1k^2+e_1k+f_1$, $q=d_2k^2+e_2k+f_2$, and $q^{-1}=d_3k^2+e_3k+f_3$, with \[
\frac{d_1}{d_2}=\begin{cases}
    \left[a_1,\dots,a_{\ell-1},a_\ell-1\right], & \textup{if $\ell\geq 1$}, \\ 1, & \textup{if $\ell=0$}, \end{cases} \qquad and \qquad \frac{d_1}{d_3}=\begin{cases} [b_t,\dots,b_1], & \textup{if $t\geq 1$}, \\ \infty ~(d_3=0), & \textup{if $t=0$}.        
    \end{cases} \]    
\end{lemma}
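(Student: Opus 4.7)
The plan is to use the matrix formalism for Hirzebruch-Jung continued fractions and exploit the fact that, in our expansion, the two $k$-dependent factors are each linear in $k$. Define $M(a) = \begin{pmatrix} a & -1 \\ 1 & 0 \end{pmatrix}$ for $a \geq 2$, and set $A = M(a_1)\cdots M(a_\ell)$, $C = M(c_1)\cdots M(c_s)$, $B = M(b_1)\cdots M(b_t)$ (interpreted as the identity when the index set is empty). A standard induction gives $M(x_1)\cdots M(x_N) = \begin{pmatrix} P & -P' \\ Q & -Q' \end{pmatrix}$, where $P/Q = [x_1,\dots,x_N]$ and $P'/Q' = [x_1,\dots,x_{N-1}]$; since the determinant equals $1$, the identity $QP' - PQ' = 1$ together with $0 < P' < P$ identifies $P'$ with $Q^{-1} \bmod P$.

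The crucial observation is that $N := M(2) - I = \begin{pmatrix} 1 & -1 \\ 1 & -1 \end{pmatrix}$ satisfies $N^2 = 0$, so $M(2)^k = I + kN$ is linear in $k$; similarly, $M(m+k) = M(m) + kE$ with $E = \begin{pmatrix} 1 & 0 \\ 0 & 0 \end{pmatrix}$. Expanding
\[
A\,(I+kN)\,C\,(M(m)+kE)\,B \;=\; A\,C\,M(m)\,B \;+\; k\bigl[ANCM(m)B + ACEB\bigr] \;+\; k^2\cdot ANCEB
\]
shows that every entry of the full product, and in particular $p$, $q$, and $-q^{-1}$, is a polynomial of degree at most $2$ in $k$. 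It remains to extract the $k^2$-coefficients.

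A direct computation gives $NCE = (c_{11} - c_{21})\begin{pmatrix} 1 & 0 \\ 1 & 0 \end{pmatrix}$, hence
\[
ANCEB \;=\; (c_{11} - c_{21}) \begin{pmatrix} (\alpha+\alpha')\gamma & (\alpha+\alpha')\gamma' \\ (\beta+\beta')\gamma & (\beta+\beta')\gamma' \end{pmatrix},
\]
where $A = \begin{pmatrix} \alpha & \alpha' \\ \beta & \beta' \end{pmatrix}$ and $B = \begin{pmatrix} \gamma & \gamma' \\ \delta & \delta' \end{pmatrix}$. Thus $d_1 = (c_{11}-c_{21})(\alpha+\alpha')\gamma$, $d_2 = (c_{11}-c_{21})(\beta+\beta')\gamma$, and $d_3 = -(c_{11}-c_{21})(\alpha+\alpha')\gamma'$ (the sign accounting for the fact that $-q^{-1}$, not $q^{-1}$, sits in the $(1,2)$ entry). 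Substituting the HJ identifications $(\alpha,\alpha',\beta,\beta') = (P_\ell,-P_{\ell-1},Q_\ell,-Q_{\ell-1})$ for $[a_1,\dots,a_\ell]$ and using the recursion $P_\ell - P_{\ell-1} = (a_\ell - 1)P_{\ell-1} - P_{\ell-2}$ identifies $(\alpha+\alpha')/(\beta+\beta')$ with $[a_1,\dots,a_{\ell-1},a_\ell - 1]$, yielding the claimed $d_1/d_2$. Similarly, writing $B = \begin{pmatrix} \tilde P_t & -\tilde P_{t-1} \\ \cdot & \cdot \end{pmatrix}$, one has $-\gamma/\gamma' = \tilde P_t/\tilde P_{t-1}$, and a short induction on $\tilde P_n = b_n \tilde P_{n-1} - \tilde P_{n-2}$ shows $\tilde P_t/\tilde P_{t-1} = [b_t,\dots,b_1]$, giving $d_1/d_3$. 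The boundary cases $\ell = 0$ (so $A = I$ and $d_1/d_2 = 1$) and $t = 0$ (so $B = I$, $\gamma' = 0$, and $d_3 = 0$) fall out immediately. The only real bookkeeping is careful sign tracking in the HJ matrix convention; the quadratic-in-$k$ structure itself is forced by the nilpotency $N^2 = 0$, so no deeper obstacle appears.
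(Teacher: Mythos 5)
Your proof is correct, and it takes a genuinely different route from the paper's. The paper works directly with continued fractions: it first proves the identity $\left[[2]^k,x\right]=\frac{(k+1)x-k}{kx-(k-1)}$ by induction on $k$, observes that $[c_1,\dots,c_s,m+k,b_1,\dots,b_t]$ is a ratio of linear functions of $k$, combines these to get the quadratic dependence, and then extracts the ratios $d_1/d_2$ and $d_1/d_3$ by inductions on $\ell$ and $t$. For the $q^{-1}$ coefficient, the paper separately analyzes $p/q^{-1}=\left[b_t,\dots,b_1,m+k,c_s,\dots,c_1,[2]^k,a_\ell,\dots,a_1\right]$, the reversed continued fraction.

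Your matrix approach is more unified. The nilpotency $N^2=0$ (where $N=M(2)-I$) and the decomposition $M(m+k)=M(m)+kE$ together make the quadratic-in-$k$ structure transparent in one line, and crucially you read off $p$, $q$, and $q^{-1}$ simultaneously as the $(1,1)$, $(2,1)$, and $-(1,2)$ entries of a single matrix product, so there is no need to treat $q^{-1}$ via the reversed continued fraction. The computation of $ANCEB$ via the rank-one matrix $NCE=(c_{11}-c_{21})\begin{pmatrix}1&0\\1&0\end{pmatrix}$ yields all three leading coefficients at once, and the identification $(P_\ell-P_{\ell-1})/(Q_\ell-Q_{\ell-1})=[a_1,\dots,a_{\ell-1},a_\ell-1]$ and $\tilde P_t/\tilde P_{t-1}=[b_t,\dots,b_1]$ are clean one-line recursions. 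One small point worth making explicit: you should note that $c_{11}-c_{21}>0$ (since $c_{11}/c_{21}=[c_1,\dots,c_s]>1$ when $s\geq 1$, and $c_{11}-c_{21}=1$ when $s=0$), so the degree-two coefficients are genuinely nonzero when they should be. With that remark included, your argument is complete and, if anything, a bit cleaner than the paper's.
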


\begin{proof} First note that \[ 
\left[[2]^k,x\right]=[\underbrace{2,\dots,2}_{k},x]=2-\frac{1}{\cdots-\displaystyle\frac{1}{2-\displaystyle\frac{1}{x}}}=\frac{(k+1)x-k}{kx-(k-1)}, ~~k=0,1,\dots;~x\in \Bbb R-\{0\},
\]
which can be established by induction on $k$. Since it is easily shown that $[c_1,\dots,c_s,m+k,b_1,\dots,b_t]$ is of the form $(ak+b)/(a'k+b')$ with $a>a'\geq 0$, we obtain \[
\left[[2]^k,c_1,\dots,c_s,m+k,b_1,\dots,b_t\right]=\frac{(a-a')k^2+(\textrm{lower degree terms})}{(a-a')k^2+(\textrm{lower degree terms})}.
\]
This shows that $d_1=d_2\neq 0$ if $\ell=0$. The assertion that $d_1/d_2=[a_1,\dots,a_{\ell-1},a_\ell -1]$ if $\ell \geq 1$ now easily follows from induction on $\ell$.

Next, we note that \[
\frac{p}{q^{-1}}=\left[b_t,\dots,b_1,m+k,c_s,\dots,c_1,[2]^k,a_\ell,\dots,a_1\right],
\]
and that $\left[c_s,\dots,c_1,[2]^k,a_\ell,\dots,a_1\right]$ is of the form $(a''k+b'')/(a'''k+b''')$ with $a''\geq a'''>0$. Thus, \[ \left[m+k,c_s,\dots,c_1,[2]^k,a_\ell,\dots,a_1\right]=\frac{a''k^2+(\textrm{lower degree terms})}{a''k+b''}.
\]
This shows that $d_1>d_3=0$ if $t=0$. The assertion that $d_1/d_3=[b_t,\dots,b_1]$ if $t\geq 1$ then follows from induction on $t$. \end{proof}

\begin{remark} Suppose that a family of $(p_4,q_4)$ is given as in Lemma \ref{lem:3.4}. Assuming that $p_4$ is relatively prime to 30, for $S$ with a singular type $\{(2,1),(3,2),(5,q_3),(p_4,q_4)\}$, we have \[
K_S^2=c-\frac{(d_2k^2+e_2k+f_2)+(d_3k^2+e_3k+f_3)-2}{d_1k^2+e_1k+f_1} \to c-\frac{d_2+d_3}{d_1} ~\textrm{as}~k\to \infty
\]
where $c$ is determined as in Equation (\ref{eq:c_and_KS^2}). Therefore, if we choose the $a_i$'s and $b_i$'s so that \[c-\frac{d_2+d_3}{d_1} \in \left(0,\displaystyle\frac{1}{10}\right),\] then we obtain an infinite family of singularity types $\{(2,1), (3,2), (5,q_3), (p_4,q_4)\}$ satisfying the oBMY inequality (note that $3e_{\textrm{orb}}(S)=1/10+3/p_4$).

\end{remark}

\begin{proposition}\label{lem:continued_fraction_length} The singularity type $\{(2,1),(3,q_2),(5,q_3),(p_4,q_4)\}$ \textup{(}with $(p_4,30)=1$\textup{)} cannot be realized by a rational homology $\mathbb{CP}^2$ if \[
\frac{p_4}{q_4}=[m_1,\dots,m_t]
\]
with $t\leq 3$. 
\end{proposition}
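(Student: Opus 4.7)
The plan is to translate the hypothesis $t\leq 3$, together with the $K_S^2$ computation carried out immediately before the proposition, into an explicit finite list of candidate pairs $(p_4,q_4)$ and then dispose of each by invoking the obstructions collected in Section~\ref{sec:conditions}.

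First I would exploit the classification, derived from the strict orbifold BMY inequality $0<K_S^2<\frac{1}{10}+\frac{3}{p_4}$ and formula~(\ref{eq:c_and_KS^2}), that forces $\sum_{i=1}^{t}m_i-3t$ to lie in $\{0,1\}$, $\{2,3\}$, or $\{5,6\}$ according as $q_3=1$, $2$, or $4$. Since each $m_i\geq 2$ and $t\leq 3$, the total $\sum_i m_i$ is bounded by $15$, so only finitely many admissible Hirzebruch--Jung tuples $(m_1,\dots,m_t)$ occur, and correspondingly finitely many pairs $(p_4,q_4)$ with $(p_4,30)=1$ and $p_4\geq 7$.

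For $t=1$, $p_4=m_1\in\{3,4,5,6,8,9\}$, none of which is simultaneously $\geq 7$ and coprime to $30$, so this case is immediate. For $t=2$, the identity $p_4=m_1m_2-1$ combined with $m_1+m_2\leq 12$ gives $p_4\leq 35$; for $t=3$, the identity $p_4=m_1m_2m_3-m_1-m_3$ together with $m_1+m_2+m_3\leq 15$ gives (after a short maximization) $p_4\leq 71$. Since every surviving candidate satisfies $p_4<2599$, each is ruled out directly by Theorem~\ref{thm:experimental}(1).

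The chief work is thus the finite enumeration itself, which is routine. An alternative, more self-contained route is a case-by-case hand-verification in the spirit of Theorems~\ref{thm:3.1} and~\ref{thm:(2,3,11,13)}: for each of the handful of surviving candidates one exhibits a failure of at least one of the linking form condition (Proposition~\ref{prop:linking_form}), the perfect squareness of $D$ (Proposition~\ref{prop:D_square}), Donaldson's diagonalization theorem (Corollary~\ref{cor:2.6}), or the spin $d$-invariant condition (Corollary~\ref{cor:2.10}). I expect the main obstacle to be the bookkeeping in this second approach; the appeal to Theorem~\ref{thm:experimental} collapses the argument to the purely arithmetic enumeration sketched above.
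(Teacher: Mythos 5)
Your proof is logically valid but takes a genuinely different route from the paper, and there are a couple of points worth flagging.

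First, a computational slip: your claimed bound $p_4\leq 71$ for $t=3$ is incorrect. The tuple $[m_1,m_2,m_3]=[5,5,5]$ is admissible (it has $\sum m_i-3t=6$, corresponding to $q_3=4$), and it yields $p_4=m_1m_2m_3-m_1-m_3=115$. The true maximum over admissible $t=3$ tuples is $115$. This does not change your conclusion since $115<2599$, but the stated bound is wrong.

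Second, and more importantly, the paper does \emph{not} close the argument by appealing to Theorem~\ref{thm:experimental}. After carrying out the same enumeration of admissible Hirzebruch--Jung tuples $[m_1,\dots,m_t]$ that you describe (using $q_2=2$ by the cited lemma of Hwang--Keum), the paper checks each tuple \emph{by hand} against Donaldson's condition (Corollary~\ref{cor:2.6}): it verifies directly that the lattice $Q_X=Q_{X(2,1)}\oplus Q_{X(3,2)}\oplus Q_{X(5,q_3)}\oplus Q_{X(p_4,q_4)}$ fails to embed in $-\mathbb{Z}^{b_2(X)+1}$, except for a handful of tuples ($[4]$, $[2,2,6]$, $[2,5,2]$, $[2,5,3]$, $[4,3,4]$, $[5,2,5]$, $[6,3,6]$) whose corresponding $p_4$-values ($4,16,16,25,40,40,96$) all fail the coprimality condition $(p_4,30)=1$. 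Your shortcut via Theorem~\ref{thm:experimental}(1) is not circular (that theorem is established by an independent computer verification over all conditions in Section~\ref{sec:conditions}), but it outsources the proof to a computation rather than giving a self-contained lattice-theoretic argument. Since this proposition is invoked as a lemma in the proof of Proposition~\ref{lem:Donaldson_and_2's}, which in turn underlies Theorem~\ref{thm:infinite}, the paper's direct Donaldson-theoretic proof is preferable: it keeps the dependency chain for Theorem~\ref{thm:infinite} purely mathematical rather than resting on a finite computational sweep. Your ``alternative, more self-contained route'' sketched in the last paragraph is, in essence, what the paper actually does -- with the refinement that Donaldson's condition alone suffices and no appeal to the linking form, $D$-squareness, or $d$-invariant conditions is needed for the surviving tuples.
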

\begin{proof} As remarked above, we must have $q_2=2$ and \[ 
\displaystyle\sum_{i=1}^t m_i - 3t = \begin{cases}
    \textrm{$0$ or $1$}, & \textrm{if $q_3=1$}, \\
    \textrm{$2$ or $3$}, & \textrm{if $q_3=2$}, \\
    \textrm{$5$ or $6$}, & \textrm{if $q_3=4$}.
\end{cases}
\] 
In the case $q_3=1$, the possible choices of $[m_1,\dots,m_t]$, up to permutation of the $m_i$'s, are the followings: \begin{align*}
    [m_1,\dots,m_t] =&  [3], [4], \\
    &  [2,4], [3,3], [3,4], \\
    &  [2,2,5], [2,2,6], [2,3,4], [2,3,5], [2,4,4], [3,3,3], [3,3,4].
    \end{align*}
In the case $q_3=2$, we have: \begin{align*}
     [m_1,\dots,m_t] =& [5], [6], \\
     & [2,6], [2,7], [3,5], [3,6], [4,4], [4,5], \\
     & [2,2,7], [2,2,8], [2,3,6], [2,3,7],\dots, [3,4,4], [3,4,5], [4,4,4],
\end{align*}    
and, in the case $q_3=4$, we have: \begin{align*}
    [m_1,\dots,m_t] = & [8],[9], \\
    & [2,9], [2,10],[3,8],[3,9],[4,7],[4,8],[5,6],[5,7],[6,6] ,\\
    & [2,2,10], [2,2,11], [2,3,9],[2,3,10], \dots, [4,5,5],[4,5,6],[5,5,5].
\end{align*}

Now, consider $X=X(2,1)\natural X(3,2)\natural X(5,q_3)\natural X(p_4,q_4)$. In the case that $q_3=1$, a straightforward calculation (as in the proofs of Theorem \ref{thm:3.1} and Theorem \ref{thm:(2,3,11,13)}) reveals that the intersection form $Q_X$ of $X$ cannot be embedded into $-\mathbb{Z}^{n+1}$, where $n=b_2(X)$, except in the following cases (under the identification $[m_1,\dots,m_t]$ with $[m_t,\dots,m_1]$): 
\[
    [m_1,\dots,m_t]=[4], [2,2,6], [2,5,2], [2,5,3].
\]
In these instances, the values of $p_4$ are $4,16,16,\text{ and }25$, respectively. In each case, $p_4$ is not relatively prime to 30.

Next, consider the case that $q_3=2$. In this case, the intersection form $Q_X$ does not embed into $-\mathbb{Z}^{b_2(X)+1}$, except when $[m_1,\dots,m_t]=[4,3,4]$ and $[5,2,5]$. However, in both of these cases, we find that $p_4=40$, which is not relatively prime to 30.

Finally, in the case of $q_3=4$, $Q_X$ embeds in $-\mathbb{Z}^{b_2(X)+1}$ only for $[m_1,\dots,m_t]=[6,3,6]$. In this specific case, $p_4$ equals 96, which, similarly, is not relatively prime to 30.
\end{proof}

For the Hirzebruch-Jung continued fraction $\mathbf{x}=[x_1,\dots,x_n]$, we define a term \textit{length} of $\mathbf{x}$ as $n$. The following proposition demonstrates that a singularity type $\{(2,1),(3,q_2),(5,q_3),(p_4,q_4)\}$ fails to meet Donaldson's condition (Corollary \ref{cor:2.6}) in instances that the Hirzebruch-Jung continued fraction of $p_4/q_4$ contains sufficiently many $2$'s.

\begin{proposition}\label{lem:Donaldson_and_2's} For a singularity type $\{(2,1),(3,q_2),(5,q_3),(p_4,q_4)\}$ \textup{}with $(p_4,30)=1$\textup{}, suppose that the Hirzebruch-Jung continued fraction of $p_4/q_4$ is expressed in the form
\[
    \frac{p_4}{q_4}=\left[\mathbf{x}_0,[2]^{i_1},\mathbf{x}_1,[2]^{i_2},\mathbf{x}_2,\dots,\mathbf{x}_{r-1},[2]^{i_r},\mathbf{x}_r\right],
\]
where $i_j\geq 1$ for $j=1,\dots,r$, $\textup{length}(\mathbf{x}_i)\geq 0$ for $i=0,r$, and $\textup{length}(\mathbf{x}_i)\geq 1$ for $i=1,\dots,r-1$. Further suppose that one of the following holds: \begin{enumerate}[label=\normalfont(\arabic*)]
    \item $q_3=1$ and \[
\sum_{i=0}^r \textup{length}(\mathbf{x}_i) <r.
    \]
    \item $q_3=2$ and \[
\sum_{i=0}^r \textup{length}(\mathbf{x}_i) <r+1.
    \]
    \item $q_3=4$ and \[
\sum_{i=0}^r \textup{length}(\mathbf{x}_i) <r+2.
    \]
\end{enumerate}
Then the singularity type $\{(2,1),(3,q_2),(5,q_3),(p_4,q_4)\}$ cannot be realized by a rational homology $\mathbb{CP}^2$.    
\end{proposition}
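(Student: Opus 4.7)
The plan is to apply Corollary~\ref{cor:2.6}, showing that a lattice embedding $\iota\colon Q_X\hookrightarrow -\mathbb{Z}^{n+1}$ cannot exist, where $X=X(2,1)\natural X(3,q_2)\natural X(5,q_3)\natural X(p_4,q_4)$ and $n=b_2(X)$. The argument will be a counting estimate: the \emph{$2$-blocks} of $X$ -- the maximal subchains of weight-$(-2)$ vertices in the plumbing graph -- collectively require more basis vectors of $-\mathbb{Z}^{n+1}$ than are available.

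The key step will be a basis-disjointness lemma: for any embedding $\iota$, the supports $S_B\subseteq\{e_1,\dots,e_{n+1}\}$ of distinct $2$-blocks are pairwise disjoint, and $|S_B|=\text{length}(B)+1$. The proof combines three observations. First, every weight-$(-2)$ vertex must map to some $\pm e_a\pm e_b$, and by the argument already used in the proofs of Theorems~\ref{thm:3.1} and~\ref{thm:(2,3,11,13)} a chain of length $k$ can be placed, after a change of basis, in the canonical form $v_i=e_i-e_{i+1}$, giving $|S_B|=k+1$. Second, distinct blocks $B,B'$ in our $X$ always satisfy $v\cdot v'=0$ for $v\in B$, $v'\in B'$ (they lie in different components of $X$, or lie in the same component but, being maximal, are separated by at least one non-2 vertex). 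A direct case analysis then shows that any overlap $e_a\in S_B\cap S_{B'}$ forces the two vertices to share \emph{both} basis elements: $v=\epsilon e_a+\eta e_b$ and $v'=\epsilon'e_a+\eta'e_b$ with $\epsilon\epsilon'=-\eta\eta'$. Third, substituting this paired shape into the adjacency relation $w\cdot v'=1$ for an adjacent non-2 vertex $w$ of $B'$, or into $v''\cdot v'=1$ for the next $-2$ vertex $v''$ of $B'$ when $\text{length}(B')\ge 2$, together with orthogonality of $w$ or $v''$ to $v$, collapses to a relation $2c=\pm 1$ with $c\in\mathbb{Z}$, a contradiction. Since the only totally isolated length-$1$ block in $X$ is the vertex from $X(2,1)$, every pair of distinct $2$-blocks falls under one of these two scenarios.

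With the lemma established, the bookkeeping is routine. Using $q_2=2$ (forced by \cite[Lemma 5.3]{Hwang-Keum-2013} in Case (1)) and letting $V_2$, $N_2$ denote the number of $-2$ vertices and the number of $2$-blocks in $X$ respectively, disjointness gives $V_2+N_2=\sum_B(\text{length}(B)+1)\le n+1$. A component-by-component tally using $X(3,2)=[2,2]$, $X(5,1)=[5]$, $X(5,2)=[3,2]$, $X(5,4)=[2,2,2,2]$, and the assumed expansion of $X(p_4,q_4)$ shows that $(V_2+N_2)-(n+1)$ equals $r-\sum\text{length}(\mathbf{x}_i)$, $(r+1)-\sum\text{length}(\mathbf{x}_i)$, or $(r+2)-\sum\text{length}(\mathbf{x}_i)$ according to whether $q_3=1,2,$ or $4$. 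In each case, hypothesis (1), (2), or (3) makes this quantity strictly positive, contradicting the bound from the lemma.

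The main obstacle will be the disjointness lemma itself, in particular excluding that the isolated vertex of $X(2,1)$ shares basis with an ``end'' $2$-block of $X(p_4,q_4)$ when $\mathbf{x}_0$ or $\mathbf{x}_r$ is empty. Such an end block always has either length $\ge 2$ -- forced when $r=1$ and $\mathbf{x}_0=\mathbf{x}_r=\varnothing$, since then $p_4=i_1+1$ and coprimality with $30$ excludes $i_1=1$ -- or a non-2 neighbor on the inner side (since $\mathbf{x}_1,\dots,\mathbf{x}_{r-1}$ are nonempty by hypothesis), so the paired-shape contradiction applies uniformly.
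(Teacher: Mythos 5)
Your proposal is correct and follows essentially the same approach as the paper's proof: both rely on putting the maximal chains of $(-2)$-spheres into canonical form $v_i=e_i-e_{i+1}$ (with the $\pm(e_a+e_b)$ wrap-around excluded by the standard mod-$2$ argument against an adjacent class), and then counting how many basis vectors of $-\mathbb{Z}^{n+1}$ these chains consume. The paper carries this out step-by-step for $q_3=1$, first normalizing the $[2]^{i_j}$ runs of $X(p_4,q_4)$ and then showing $u_1,u_2,u_3$ need five further fresh indices, yielding $\sum_j i_j + r + 5 > t+5$; your abstraction into a disjointness-of-$2$-block-supports lemma together with the tally $V_2+N_2\le n+1$ packages the identical estimate in a way that handles all three $q_3$ cases uniformly, which is a small presentational gain. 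One point worth being slightly more explicit about: the canonical-form claim for a length-$k$ chain with $k\ge 3$ itself needs the mod-$2$ exclusion of the internal wrap-around $v_3=\pm(e_1+e_2)$, and this requires $v_3$ to have either a further neighbor $v_4$ or a non-$2$ neighbor; your coprimality observation (which excludes $i_1=1$ and, for the same reason, $i_1=3$ when $r=1$ and both $\mathbf{x}_0,\mathbf{x}_r=\varnothing$) together with the nonemptiness of $\mathbf{x}_1,\dots,\mathbf{x}_{r-1}$ covers this, but your write-up flags only the inter-block overlap case; both are dispatched by the same mod-$2$ argument. With that caveat acknowledged, the argument is sound and agrees with the paper's.
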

\begin{proof} 
Since the proofs of the three cases are similar, we focus on the first case.

Further, since the length of a continued fraction of $p_4/q_4$ is 
\[
    t:=\sum_{j=1}^r i_j+ \sum_{i=0}^r \textrm{length}(\mathbf{x}_i),
\]
we can assume that $t\geq 4$ by Proposition \ref{lem:continued_fraction_length}. 

Consider $X=X(2,1)\natural X(3,2)\natural X(5,1)\natural X(p_4,q_4)$ and suppose that there exists an embedding $\iota:Q_X\hookrightarrow -\mathbb{Z}^{t+5}$. Let $u_1,u_2,u_3\in H_2(X;\mathbb Z)$ be the homology classes of the $(-2)$-spheres of the plumbing graph of $X(2,1)\natural X(3,2)$. Additionally, for each $j=1,\dots,r$, let $v_{j,1},\dots,v_{j,i_j}\in H_2(X;\mathbb Z)$ correspond to the homology classes of the $(-2)$-spheres of the plumbing graph of $X(p_4,q_4)$ associated with the $[2]^{i_j}$ sections. Refer to Figure \ref{fig:lem3.9}.

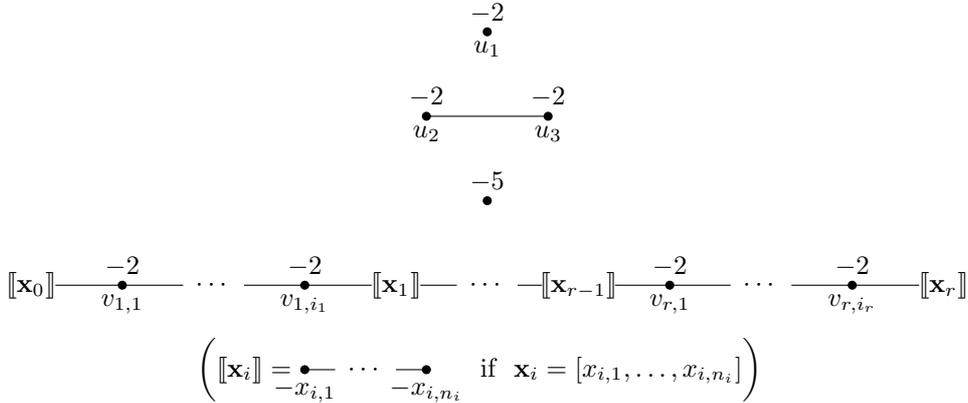
\begin{figure}[!th]
\centering
\begin{tikzpicture}[scale=0.8]
\draw (0,0) node[circle, fill, inner sep=1.2pt, black]{};
\draw (-1,-1.4) node[circle, fill, inner sep=1.2pt, black]{};
\draw (1,-1.4) node[circle, fill, inner sep=1.2pt, black]{};
\draw (0,-2.8) node[circle, fill, inner sep=1.2pt, black]{};
\draw (0,-4.2) node{$\cdots$};
\draw (-4.5,-4.2) node{$\cdots$};
\draw (4.5,-4.2) node{$\cdots$};
\draw (-6,-4.2) node[circle, fill, inner sep=1.2pt, black]{};
\draw (-3,-4.2) node[circle, fill, inner sep=1.2pt, black]{};
\draw (3,-4.2) node[circle, fill, inner sep=1.2pt, black]{};
\draw (6,-4.2) node[circle, fill, inner sep=1.2pt, black]{};
\draw (-7.5,-4.2) node{$\llbracket \mathbf{x}_0 \rrbracket$};
\draw (-1.5,-4.2) node{$\llbracket \mathbf{x}_1 \rrbracket$};
\draw (1.5,-4.2) node{$\llbracket \mathbf{x}_{r-1} \rrbracket$};
\draw (7.5,-4.2) node{$\llbracket \mathbf{x}_r \rrbracket$};

\draw (0,0) node[above]{$-2$};
\draw (-1,-1.4) node[above]{$-2$};
\draw (1,-1.4) node[above]{$-2$};
\draw (0,-2.8) node[above]{$-5$};
\draw (-3,-4.2) node[above]{$-2$};
\draw (-6,-4.2) node[above]{$-2$};
\draw (3,-4.2) node[above]{$-2$};
\draw (6,-4.2) node[above]{$-2$};

\draw (-1,-1.4)--(1,-1.4)   (-1.1,-4.2)--(-0.5, -4.2) (0.5,-4.2)--(0.9,-4.2) (-4,-4.2)--(-1.9,-4.2) (2.1,-4.2)--(4,-4.2) (5,-4.2)--(7.1,-4.2) (-7.1,-4.2)--(-5,-4.2);

\draw (0,0) node[below]{$u_1$};
\draw (-1,-1.4) node[below]{$u_2$};
\draw (1,-1.4) node[below]{$u_3$};
\draw (-6,-4.2) node[below]{$v_{1,1}$};
\draw (-3,-4.2) node[below]{$v_{1,i_1}$};
\draw (3,-4.2) node[below]{$v_{r,1}$};
\draw (6,-4.2) node[below]{$v_{r,i_r}$};

\draw (-3,-5.6) node[circle, fill, inner sep=1.2pt, black]{};
\draw (-1,-5.6) node[circle, fill, inner sep=1.2pt, black]{};
\draw (-4,-5.6) node{$\bigg( \llbracket \mathbf{x}_i \rrbracket = $};
\draw (-3,-5.6) node[below]{$-x_{i,1}$};
\draw (-1,-5.6) node[below]{$-x_{i,n_i}$};
\draw (-2,-5.6) node{$\cdots$};
\draw (-3,-5.6)--(-2.5,-5.6) (-1.5,-5.6)--(-1,-5.6)  ;
\draw (2.2,-5.6) node{if $~~\mathbf{x}_i=[x_{i,1},\dots,x_{i,n_i}]\bigg)$};
\end{tikzpicture}
\caption{The plumbing graph of $X(2,1)\natural X(3,2)\natural X(5,1)\natural X(p_4,q_4).$}
\label{fig:lem3.9}
\end{figure}

We first consider the classes $v_{1,1},\dots,v_{1,i_1}$. Since $\iota(v_{1,1})^2=-2$, it follows that $\iota(v_{1,1})=\pm e_i\pm e_j$ for some $1\leq i<j\leq t+5$. We can then assume $\iota(v_{1,1})=e_1-e_2$ after a basis change. For $\iota(v_{1,2})=\pm e_i\pm e_j$ with $1\leq i<j\leq t+5$, and given $\iota(v_{1,1})\cdot \iota(v_{1,2})=1$, we must have $|\{1,2\}\cap \{i,j\}|=1$, leading us to assume $i=2$ and $\iota(v_{1,2})=e_2-e_3$. If $i_1>2$, then write $\iota(v_{1,3})=\pm e_i\pm e_j$ with $1\leq i<j\leq t+5$. Considering $v_{1,1}\cdot v_{1,3}=0$ and $v_{1,2}\cdot v_{1,3}=1$, we conclude that either $\iota(v_{1,3})=\pm(e_1+e_2)$ or $i=3$. However, if $\iota(v_{1,3})=\pm(e_1+e_2)$, then, as $t\geq 4$, there exists a class $v\in H_2(X;\mathbb{Z})$ such that $v\cdot v_{1,3}=1$ while $v\cdot v_{1,1}=0$, leading to a contradiction: $0=\iota(v_{1,1})\cdot \iota(v)=(e_1-e_2)\cdot \iota(v) \equiv  \pm(e_1+e_2)\cdot \iota(v)=\iota(v_{1,3})\cdot \iota(v)=1 \mod 2$. Thus, we must have $i=3$ and we can assume $\iota(v_{1,3})=e_3-e_4$. Continuing this process allows us to assume that $\iota(v_{1,k})=e_k-e_{k+1}$ for each $k=1,\dots,i_1$.

Next, for the classes $v_{2,1},\dots,v_{2,i_2}$, we write $\iota(v_{2,1})=\pm e_i\pm e_j$ for $1\leq i<j\leq t+5$. Given that $v_{2,1}\cdot v_{1,k}=0$ for each $k=1,\dots,i_1$, it follows that $i>i_1+1$. (In the case that $i_1=1$, we need an assumption that $t\geq 4$ as in the preceding paragraph.) Therefore, we may assume $\iota(v_{2,1})=e_{i_1+2}-e_{i_1+3}$, and similarly, that $\iota(v_{2,k})=e_{i_1+k+1}-e_{i_1+k+2}$ for each $k=1,\dots,i_2$.

Repeating this procedure, we obtain 
\[
    \iota(v_{j,k})=e_{\left(\sum_{\ell=1}^{j-1}i_\ell
    \right)+ k + j-1}- e_{\left(\sum_{\ell=1}^{j-1}i_\ell \right)+ k + j}
\]
for each $j=1,\dots,r$ and $k=1,\dots,i_j$. (If $\sum_{\ell=1}^r i_\ell + r > t+5$, i.e., $r-5>\sum_{i=0}^r\textrm{length}(\mathbf{x}_i)$, then $Q_{X(p_4,q_4)}$ does not embed in $-\mathbb{Z}^{t+5}$, resulting in a contradiction.)

Finally, for the classes $u_1, u_2, u_3$, we write $\iota(u_k)=\pm e_{i'k}\pm e{j'_k}$ for $k=1,2,3$, with $1\leq i'_k <j'_k\leq t+5$. Then the following conditions must hold: 
\begin{align*}
    \{i'_1,j'_1\} \cap \{i'_k, j'_k\} & =\emptyset~~(k=2,3), \\
    |\{i'_2,j'_2\}\cap \{i'_3,j'_3\}| & =1,\\
    \{i'_k, j'_k\} \cap \bigg\{1,2,\dots, \sum_{\ell=1}^r i_\ell + r\bigg\} & = \emptyset ~~(k=1,2,3).    
\end{align*}
It follows that the set $\{1,2,\dots,\sum_{\ell=1}^r i_\ell+r, i_1',j_1',i_2',j_2',i_3',j_3'\}$ must have order $\sum_{\ell=1}^r i_\ell +r +5$ (after removing duplicates). However, this is impossible as $\sum_{\ell=1}^r i_\ell +r +5>t+5$, given that $r>\sum_{i=0}^r \textrm{length}(\mathbf{x}_i)$. This contradiction indicates that an embedding $\iota:Q_X\hookrightarrow -\mathbb{Z}^{t+5}$ is not feasible. Hence we conclude that the singularity type $\{(2,1),(3,2),(5,1),(p_4,q_4)\}$ cannot be realized by a rational homology $\mathbb{CP}^2$ by Corollary \ref{cor:2.6}.
\end{proof}

Now, a methodology for identifying infinite families of types stated in Theorem \ref{thm:infinite} becomes evident. The technique involves utilizing Lemma \ref{lem:3.4} to derive a family of pairs $(p_4,q_4)$ that meet the orbifold BMY inequality. If these pairs feature a sufficient number of $2$'s in the Hirzebruch-Jung continued fraction, then Proposition \ref{lem:Donaldson_and_2's}, which follows from Donaldson's theorem, is applicable to effectively obstruct the entirety of this family. The following explicit example illustrates this approach. It will become clear that similar families can be readily constructed using the techniques presented here.

\begin{example} Consider a singularity of type $\{(2,1),(3,2),(5,4),(p_4,q_4)\}$, where \[
\frac{p_4}{q_4}=\left[[2]^k,2+k,9\right], ~~k=0,1,\dots.
\]
By applying Lemma \ref{lem:3.4}, we obtain ${d_1}/{d_2}=1$ and ${d_1}/{d_3}=9$. 
A direct computation yields 
\[
    p_4=9k^2+17k+17, ~~q_4=9k^2+8k+9,~~\textrm{and}~~ q_4^{-1}=k^2+2k+2.
\]
Since $2k+(2+k)+9 - 3(k+2)=5$, we find $c={17}/{15}$ in this case, leading to \[c-\frac{d_2+d_3}{d_1}=\frac{1}{45}\in (0,\frac{1}{10}).\] In fact, 
\[
    K_S^2=\frac{17}{15}-\frac{(9k^2+8k+9)+(k^2+2k+2)-2}{9k^2+17k+17}=\frac{3k^2+139k+154}{15(9k^2+17k+17)}.
\]
Therefore, assuming that $(p_4,30)=1$, which holds if and only if $k\neq 2\mod3$, the type 
\[
    \{(2,1),(3,2),(5,4),(9k^2+17k+17,9k^2+8k+9)\} 
\]
satisfies the oBMY inequality for each $k\geq 12$ with $k\neq 2\mod 3$. However, according to Proposition \ref{lem:Donaldson_and_2's}, the intersection form of \[
X(2,1)\natural X(3,2)\natural X(5,4)\natural X(9k^2+17k+17,9k^2+8k+9)
\]
cannot embed into $-\mathbb{Z}^{k+10}$ for all $k$. 

This argument extends to the singularity type $\{(2,1),(3,2),(5,4),(p_4,q_4)\}$ where 
\[
    \frac{p_4}{q_4}=\left[[2]^{k+b_1-9}, 2+k, b_1\right], ~~9\leq b_1\leq 30,~ k\gg 1,
\]
and more generally, 
\[
    \frac{p_4}{q_4}=\left[[2]^{k+b_n-n-8},2+k, [2]^{n-1}, b_n\right], ~~ n\geq 1,~ 9\leq b_n\leq 30, ~ k\gg 1.
\]
\end{example}

For the sake of completeness, we also illustrate families in the cases where $q_3=1$ and $q_3=2$, which provide more examples of Theorem \ref{thm:infinite}:

\begin{example} Consider a singularity of type $\{(2,1),(3,2),(5,2),(p_4,q_4)\}$, where \[
\frac{p_4}{q_4}=\left[[2]^{k-5},2+k,2,2\right], ~~k\geq5.
\]
Then we have \[
p_4=3k^2-11k-1, ~~q_4=3k^2-14k-2,~~ q_4^{-1}=2k^2-7k-2,
\]
and, in particular, $(p_4,30)=1$ if and only if $k\neq 1\mod 3$. For all such $k$, the singularity type cannot be realized by a rational homology $\mathbb{CP}^2$ according to Proposition \ref{lem:Donaldson_and_2's}. However, by Lemma \ref{lem:3.4}, the singularity type satisfies the oBMY inequality for infinitely many $k$. In fact, we have \[
K_S^2= \frac{26}{15}-\frac{q_4+q_4^{-1}-2}{p_4}=\frac{3k^2+29k+64}{15(3k^2-11k-1)},
\]
and the type satisfies the oBMY inequality for all $k\geq 32$ with $k\neq 1\mod 3$.
\end{example}

\begin{example} Consider a singularity of type $\{(2,1),(3,2),(5,1),(p_4,q_4)\}$, where 
\[
    \frac{p_4}{q_4}=\left[[2]^{k-1},2+k,2,4\right], ~~k=1,2,\dots.
\]
Then we have 
\[
    p_4=7k^2+3k+7, ~~q_4=7k^2-4k+4,~~ q_4^{-1}=2k^2+k+2,
\]
and particularly, $(p_4,30)=1$ for all $k$. We also find 
\[
    K_S^2= \frac{20}{15}-\frac{q_4+q_4^{-1}-2}{p_4}=\frac{k^2+21k+16}{3(7k^2+3k+7)},
\]
indicating that the type satisfies the oBMY inequality for all $k\geq 19$.

Although we cannot directly apply Proposition \ref{lem:Donaldson_and_2's} for all $k$, assuming $k$ is sufficiently large, we can demonstrate that the given singularity type does not satisfy Donaldson's condition, as follows: Let $X=X(2,1)\natural X(3,2)\natural X(5,1)\natural X(7k^2+3k+7,7k^2-4k+4)$ and suppose there exists an embedding $\iota:Q_X\hookrightarrow -\mathbb{Z}^{k+7}$. Let $u_1,u_2,u_3,u_4\in H_2(X;\mathbb Z)$ be the homology classes corresponding to the 2-spheres of the plumbing graph of $X(2,1)\natural X(3,2)\natural X(5,1)$, and let $v_1,\dots,v_{k+2}\in H_2(X;\mathbb Z)$ be the homology classes corresponding to the 2-spheres of the plumbing graph of $X(p_4,q_4)$. Refer to Figure \ref{fig:example_(p_4,q_4)_q_3=1}. Proceeding as in the proof of Proposition \ref{lem:Donaldson_and_2's}, we can assume the following: \begin{align*}
    \iota(v_i)&=e_i-e_{i+1} ~~(i=1,\dots,k-1, k+1), \\
    \iota(u_1) & = e_{k+3}-e_{k+4}, \\
    \iota(u_i) & = e_{k+3+i}-e_{k+4+i} ~~(i=2,3).
\end{align*}

Next, consider the vector $\iota(v_{k+2})$. Since $v_{k+2}^2=-4$, $\iota(v_{k+2})$ can either be of the form $\pm 2 e_i$ or $\pm e_{i_1}\pm e_{i_2}\pm e_{i_3}\pm e_{i_4}$. In the former case, $\iota(v_{k+2})\cdot \iota (v_{k+1})=1$ cannot be satisfied, so we consider the latter form and write $\iota(v_{k+2})=\pm e_{i_1}\pm e_{i_2}\pm e_{i_3}\pm e_{i_4}$ with $i_1<\cdots<i_4$. Given that $1=\iota(v_{k+2})\cdot \iota (v_{k+1})=\iota(v_{k+2})\cdot (e_{k+1}-e_{k+2})$, we deduce that $\{i_1,\dots,i_4\}\cap \{k+1,k+2\}$ must contain exactly one element, which we can assume to be $k+2$. Assuming $k\geq 4$, it follows that $\{i_1,\dots,i_4\}\cap \{1,\dots,k\}=\emptyset$, leading to $\{i_1,\dots,i_4\}\subset \{k+2,\dots,k+7\}$. By considering that $v_{k+2}\cdot u_i=0$ for $i=1,2,3$, we find that $\{i_1,\dots,i_4\}=\{k+2,k+5,k+6,k+7\}$, and we may assume that (cf. Definition \ref{def:unique_embedding}) \[
\iota(v_{k+2})=e_{k+2}+e_{k+5}+e_{k+6}+e_{k+7}.
\]

For the vector $\iota(u_4)$, written as $\iota(u_4)=\sum_{j=1}^{k+7}a_je_j$, the relation that $u_4\cdot v_i=0$ for $i=1,\dots,k-1$ implies $a_1=\cdots=a_k$. Assuming $k\geq 6$, it follows that $a_1=\cdots=a_k=0$. Hence, $\iota(u_4)=\sum_{j=k+1}^{k+7} a_je_j$, and the coefficients must satisfy 
\[
    a_{k+1}=a_{k+2},~a_{k+3}=a_{k+4},~a_{k+5}=a_{k+6}=a_{k+7},~\textrm{and}~a_{k+2}+a_{k+5}+a_{k+6}+a_{k+7}=0.
\]
However, this leads to the conclusion that $\iota(u_4)^2\neq -5$, a contradiction. This implies that such an embedding $\iota$ cannot exist for each $k\geq 6$ (in fact, such an $\iota$ does not exist for all $k\geq 1$ except for $k=3$), and that the singularity type \[
\{(2,1),(3,2),(5,1),(7k^2+3k+7,7k^2-4k+4)\}
\]
cannot be realized by a rational homology $\mathbb{CP}^2$ for each $k\geq 1$.
\end{example}

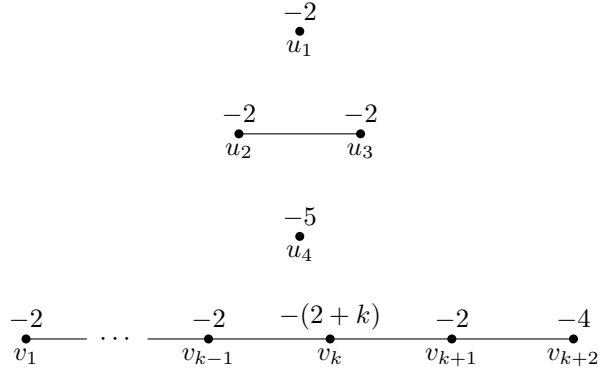
\begin{figure}[!th]
\centering
\begin{tikzpicture}[scale=0.8]
\draw (0,0) node[circle, fill, inner sep=1.2pt, black]{};
\draw (-1,-1.7) node[circle, fill, inner sep=1.2pt, black]{};
\draw (1,-1.7) node[circle, fill, inner sep=1.2pt, black]{};
\draw (0,-3.4) node[circle, fill, inner sep=1.2pt, black]{};
\draw (-4.5,-5.1) node[circle, fill, inner sep=1.2pt, black]{};
\draw (-1.5,-5.1) node[circle, fill, inner sep=1.2pt, black]{};
\draw (0.5,-5.1) node[circle, fill, inner sep=1.2pt, black]{};
\draw (2.5,-5.1) node[circle, fill, inner sep=1.2pt, black]{};
\draw (4.5,-5.1) node[circle, fill, inner sep=1.2pt, black]{};

\draw (0,0) node[above]{$-2$};
\draw (-1,-1.7) node[above]{$-2$};
\draw (1,-1.7) node[above]{$-2$};
\draw (0,-3.4) node[above]{$-5$};
\draw (-4.5,-5.1) node[above]{$-2$};
\draw (-1.5,-5.1) node[above]{$-2$};
\draw (0.5,-5.1) node[above]{$-(2+k)$};
\draw (2.5,-5.1) node[above]{$-2$};
\draw (4.5,-5.1) node[above]{$-4$};

\draw (-1,-1.7)--(1,-1.7) (-4.5,-5.1)--(-3.5,-5.1) (-2.5,-5.1)--(4.5,-5.1);

\draw (-3,-5.1) node{$\cdots$};
\draw (0,0) node[below]{$u_1$};
\draw (-1,-1.7) node[below]{$u_2$};
\draw (1,-1.7) node[below]{$u_3$};
\draw (0,-3.4) node[below]{$u_4$};
\draw (-4.5,-5.1) node[below]{$v_1$};
\draw (-1.5,-5.1) node[below]{$v_{k-1}$};
\draw (0.5,-5.1) node[below]{$v_k$};
\draw (2.5,-5.1) node[below]{$v_{k+1}$};
\draw (4.5,-5.1) node[below]{$v_{k+2}$};
\end{tikzpicture}
\caption{The plumbing graph of $X(2,1)\natural X(3,2)\natural X(5,1)\natural X(7k^2+3k+7, 7k^2-4k+4).$}
\label{fig:example_(p_4,q_4)_q_3=1}
\end{figure}

\section{A mysterious infinite family}\label{subsec:mysterious_infinite_family}
Although many conditions from the study of smooth 4-manifolds proved effective in ruling out most types with four singularities, unfortunately, we were able to construct an infinite family of types that cannot be obstructed from realizing a simply-connected rational homology $\mathbb{CP}^2$ by any conditions discussed in this paper. Consider the following family of singularity types:
    \[\left\{(2,1),(3,2),(5,1),(204s^2+732s+649,12(17s^2+44s+20))\right\}, ~~s=0,1,2,\dots\]
We claim that such a type of singularities satisfies all the conditions given in Section \ref{sec:conditions} for infinitely many $s$. More precisely, let $s_n=5k_n+3$, where  \[
k_n=-\frac{7}{2}+\frac{1}{60}\left( \left(285+71\sqrt{15} \right)\cdot \left(31-8\sqrt{15}\right)^n + \left(285-71\sqrt{15} \right)\cdot \left(31+8\sqrt{15}\right)^n \right).
\]for $n=0,1,2,\dots$. For example, $s_0=33$, $s_1=38$, $s_2=3193$, and $s_3=198798$, $\dots$.
\begin{proposition}\label{prop:mysterious} 
The family of singularity types 
\[\left\{(2,1),(3,2),(5,1),(204s_n^2+732s_n+649,12(17s_n^2+44s_n+20))\right\}, ~~n=0,1,\dots\]
satisfy all the conditions mentioned in Section~\ref{sec:conditions}.    
\end{proposition}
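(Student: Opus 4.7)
The plan is to verify each of the six obstructions from Section~\ref{sec:conditions} directly for the given family. All of the obstructions except the perfect squareness of $D$ (Proposition~\ref{prop:D_square}) can be made to hold for every $s$ with $(p_4,30)=1$ and $s$ sufficiently large, while the perfect squareness of $D$ is precisely what forces $s$ to lie in the subsequence $s_n$ via a Pell equation.

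The first step is to observe that for every $s\geq 0$ the Hirzebruch--Jung continued fraction is
\[
    \frac{p_4}{q_4}=\left[[2]^s,\ 3,\,4,\,2,\ 3+s,\ 3,\,2,\,4\right],
\]
which can be checked by induction on $s$. This has the form~\eqref{eq:family_of_(p_4,q_4)} with $\ell=0$, $m=3$, $(c_1,c_2,c_3)=(3,4,2)$, $(b_1,b_2,b_3)=(3,2,4)$, so Lemma~\ref{lem:3.4} yields $d_1=d_2=204$ and $d_3=60$. Since $\sum_i m_i-3t=0$, equation~\eqref{eq:c_and_KS^2} gives $c=\tfrac{4}{3}$, and
\[
    K_S^2\ \xrightarrow{s\to\infty}\ \tfrac{4}{3}-\tfrac{d_2+d_3}{d_1}=\tfrac{2}{51}<\tfrac{1}{10},
\]
so the orbifold BMY inequality (Theorem~\ref{thm:oBMY}) holds for all $s$ sufficiently large. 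One also derives an explicit polynomial expression for $q_4^{-1}$ in $s$ (with leading coefficient $d_3=60$), which turns $D=30p_4K_S^2=40p_4-30(q_4+q_4^{-1}-2)$ into an explicit quadratic in $s$.

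The heart of the proof is the perfect squareness of $D$. After completing the square and clearing denominators, the equation $D(s)=m^2$ becomes a Pell-type equation of the shape $X^2-15Y^2=\pm 1$ in integer variables $(X,Y)$ tied to $(s,m)$ by an affine change of variables. The fundamental unit of $\mathbb{Z}[\sqrt{15}]$ is $31+8\sqrt{15}$ (since $31^2-15\cdot 8^2=1$), so all positive integer solutions are generated by its powers, and the closed-form recursion defining $k_n$ (whence $s_n=5k_n+3$) is precisely the image of that solution set under the affine change of variables, as verified by matching the initial data $s_0=33$ and $s_1=38$ against the recurrence. Identifying the correct Pell equation and confirming this identification is the most delicate step of the proof. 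Given it, the linking form condition (Proposition~\ref{prop:linking_form}) reduces via the Chinese Remainder Theorem to residuacity checks modulo each of $2,3,5,p_4$. Since $s_n\equiv 3\pmod 5$, a direct computation gives $p_4\equiv 1\pmod{30}$ for every $s_n$, making the residuacity modulo $2,3,5$ automatic; the residuacity modulo $p_4$ reduces to that of $-30q_4$, which is verified via an explicit square root read off from the Pell solutions.

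For the smooth obstructions (Corollaries~\ref{cor:2.6} and~\ref{cor:2.10} and Theorem~\ref{thm:2.7}), the plan is to exhibit an explicit lattice embedding $\iota\colon Q_X\hookrightarrow-\mathbb{Z}^{s+12}$, where $X=X(2,1)\natural X(3,2)\natural X(5,1)\natural X(p_4,q_4)$ has second Betti number $s+11$. The initial $[2]^s$ chain in $X(p_4,q_4)$ is embedded in the standard pattern $v_i\mapsto e_i-e_{i+1}$, and the remaining seven vertices of $X(p_4,q_4)$ (with weights $-3,-4,-2,-(3+s),-3,-2,-4$) together with the four vertices of $X(2,1)\natural X(3,2)\natural X(5,1)$ are placed in a pattern independent of $s$, closely modelled on Examples~\ref{ex:index} and~\ref{example:4771}. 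The orthogonal complement $\iota(Q_X)^\perp$ is then verified to be a rank-one lattice generated by a vector of square $-30p_4$, compatibly with Proposition~\ref{prop:2.8}. A characteristic $2$-sphere is obtained by tubing a designated collection of base spheres, and its self-intersection is checked to be congruent to the signature $-(s+12)$ modulo~$16$, verifying Theorem~\ref{thm:2.7}. Finally, by additivity of the $d$-invariant together with the recursion~\eqref{eq:d-invariant}, one shows that $d(Y,\mathfrak{s})=-\tfrac14$ for the spin structure $\mathfrak{s}$ on $Y=\#_{i=1}^{4}L(p_i,q_i)$ induced by the embedding, fulfilling Corollary~\ref{cor:2.10}. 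The main obstacle in this last stage is writing the embedding uniformly in the parameter $s$ so that the orthogonal complement, the characteristic vector, and the distinguished spin structure are simultaneously visible; once that is done, the mod-$16$ and $d$-invariant checks are essentially direct.
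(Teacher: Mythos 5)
Your proposal follows the same route as the paper's own proof: verify the Hirzebruch--Jung expansion, use the $K_S^2$ formula to check oBMY, reduce squareness of $D$ to a quadratic Diophantine condition, verify the linking form congruences, and exhibit a uniform-in-$s$ lattice embedding to handle Donaldson, Kervaire--Milnor, the orthogonal complement, and the spin $d$-invariant. Your invocation of Lemma~\ref{lem:3.4} to read off $d_1=d_2=204$, $d_3=60$ and the limit $K_S^2\to 2/51$ is a small streamlining; the paper simply records the explicit polynomial $K_S^2=\frac{2(12s^2+348s+653)}{3(204s^2+732s+649)}$.

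One technical inaccuracy worth flagging: after substituting $s=5k+3$, the squareness condition is $60k^2+420k+361=a^2$, which upon completing the square becomes $a^2-15(2k+7)^2=-374$. This is a generalized Pell equation with right-hand side $-374$, not the unit equation $X^2-15Y^2=\pm1$; its solutions form a union of orbits under the action of the unit $31+8\sqrt{15}$, rather than being generated directly by its powers. In particular, the claim that ``all positive integer solutions are generated by its powers'' is not quite right, although it is also not needed --- the proposition only asserts that the designated sequence $s_n$ works, and the paper verifies this by exhibiting the companion sequence $a_n$ explicitly and checking $60k_n^2+420k_n+361=a_n^2$ by induction. Your proposal would be cleaner if phrased that way. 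The remaining steps (the explicit square roots for the linking form depending on $s\bmod 5$, the uniform codimension-one embedding into $-\mathbb{Z}^{s+12}$, and the $d$-invariant recursion) are described at the level of strategy but not carried out; in the paper these are done in full (see Figure~\ref{fig:mysterious_family}), and carrying them out is genuinely the bulk of the work.
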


\begin{proof}
(1) Note that $204s^2+732s+649$ is relatively prime to 30 unless $s\equiv 1\mod 5$. So, from now on, we assume that $s\not\equiv 1\mod 5$. 

(2) For $S$ with a singularity of type $\{(2,1),(3,2),(5,1),(204s^2+732s+649, 12(17s^2+44s+20))\}$, we have 
    \[K_S^2=\frac{2(12s^2+348s+653)}{3(204s^2+732s+649)}>0 ~~\textrm{for all $s$},\]
and 
    \[3e_{\textrm{orb}}(S)-K_S^2= \frac{372s^2-4764s-11023}{30(204s^2+732s+649)}>0~~\textrm{if $s\geq 15$}.\]
Therefore, $S$ satisfies the orbifold BMY inequality for $s\geq 15$. 

(3) We have 
\[
    D=20(12s^2+348s+653)=2^2\cdot 5\cdot (12s^2+348s+653).
\]
For $D$ to be a square number, we must have $12s^2+348s+653=0 \mod 5$, and this holds only if $s=3\mod 5$. Writing $s=5k+3$, we get 
\[
    D=10^2 (60k^2+420k+361),
\]
and $D$ is a square number if and only if $60k^2+420k+361$ is a square number. Also, $60k^2+420k+361$ is a square number for infinitely many positive integers $k$. In fact, if we let \[
k_n=-\frac{7}{2}+\frac{1}{60}\left( \left(285+71\sqrt{15} \right)\cdot \left(31-8\sqrt{15}\right)^n + \left(285-71\sqrt{15} \right)\cdot \left(31+8\sqrt{15}\right)^n \right)
\]
and \[
a_n=\frac{1}{2}\left(\left(71+19\sqrt{15} \right)\cdot \left(31-8\sqrt{15}\right)^n + \left(71-19\sqrt{15} \right)\cdot \left(31+8\sqrt{15}\right)^n  \right)
\]
for $n=0,1,2,\dots$, then we have \[
60k_n^2+420k_n+361=a_n^2
\]
for all $n$. By using the binomial theorem and induction, it can be checked that $k_n$ and $a_n$ are positive integers for all $n$. Therefore, the squareness of $D$ condition (Proposition \ref{prop:D_square}) holds for infinitely many values of $s$.

(4) Next, we check the linking form condition (Proposition \ref{prop:linking_form}). We need to demonstrate that $-q= 3876s^2+20028s+24601 \mod 30(204s^2+732s+649)$ is a quadratic residue. Through a direct calculation, it can be shown that $-q\equiv x^2 \mod 30(204s^2+732s+649)$, where \[
x=\begin{cases}
    1632s^2+5754s+4979, & \textrm{if $s=0\mod 5$}, \\
    2856s^2+10146s+8873, & \textrm{if $s=2$ or $4 \mod 5$}, \\
    408s^2+1362s+1085, & \textrm{if $s=3\mod 5$}.
\end{cases}
\]

(5) Now we consider the condition from Donaldson's theorem. We have \[
\frac{204s^2+732s+649}{12(17s^2+44s+20)}=\left[[2]^{s},3,4,2,3+s,3,2,4\right],
\]
and the intersection form $Q_X$ of $X:=X(2,1) \natural X(3,2)\natural X(5,1) \natural X(204s^2+732s+649, 12(17s^2+44s+20))$ embeds into $-\Bbb Z^{s+12}$ via the map given in Figure \ref{fig:mysterious_family}. Tubing 2-spheres corresponding to the red vertices gives a smoothly embedded characteristic 2-sphere satisfying Theorem \ref{thm:2.7}. Moreover, assuming $s\neq 1\mod 5$, the orthogonal complement of $Q_X$ in $-\Bbb Z^{s+12}$ is generated by $\sum_{i=1}^{s+12}a_ie_i$ where $a_{12}=a_{13}=\cdots=a_{12+s}$ and  \begin{align*}
    (& a_1, \dots,a_{12}) \\
     & = (-(6s+9),-(6s+9),10,10,-10, 30,30,4(6s+9),-4(6s+9),-6(6s+9),10(6s+9),60).
\end{align*}
Since \begin{align*}
    (6 & s+9)^2 \cdot 2 + 10^2 \cdot 3+30^2\cdot 2+4^2(6s+9)^2\cdot 2+6^2(6s+9)^2+10^2(6s+9)^2+60^2\cdot (s+1) \\
    & = 2\cdot 3\cdot 5\cdot (204s^2+732s+649),
\end{align*}
the condition in Proposition \ref{prop:2.8} is also satisfied.

\begin{figure}[!th]
\centering
\begin{tikzpicture}[scale=1]
\draw (0,0) node[circle, fill, inner sep=1.2pt, black]{};
\draw (-1,-1.4) node[circle, fill, inner sep=1.2pt, black]{};
\draw (1,-1.4) node[circle, fill, inner sep=1.2pt, black]{};
\draw (0,-2.8) node[circle, fill, inner sep=1.2pt, red]{};
\draw (0,-4.2) node[circle, fill, inner sep=1.2pt, black]{};
\draw (-5,-4.2) node[circle, fill, inner sep=1.2pt, black]{};
\draw (-2.5,-4.2) node[circle, fill, inner sep=1.2pt, red]{};
\draw (3,-4.2) node[circle, fill, inner sep=1.2pt, red]{};
\draw (6,-4.2) node[circle, fill, inner sep=1.2pt, black]{};
\draw (-6,-5.6) node[circle, fill, inner sep=1.2pt, black]{};
\draw (7,-5.6) node[circle, fill, inner sep=1.2pt, black]{};
\draw (-6,-7) node[circle, fill, inner sep=1.2pt, black]{};
\draw (7,-7) node[circle, fill, inner sep=1.2pt, black]{};

\draw (-6,-6.2) node{$\vdots$};

\draw (0,0) node[above]{$-2$};
\draw (-1,-1.4) node[above]{$-2$};
\draw (1,-1.4) node[above]{$-2$};
\draw (0,-2.8) node[above]{$-5$};
\draw (0,-4.2) node[above]{$-2$};
\draw (-5,-4.2) node[above]{$-3$};
\draw (-2.5,-4.2) node[above]{$-4$};
\draw (3,-4.2) node[above]{$e_7-e_{11}+(e_{12}+\cdots+e_{12+s})$};
\draw (6,-4.2) node[above]{$-3$};
\draw (-6,-5.6) node[left]{$-2~$};
\draw (7,-5.6) node[left]{$-2~$};
\draw (-6,-7) node[left]{$-2~$};
\draw (7.1,-7) node[right]{$-4$};

\draw (-1,-1.4)--(1,-1.4) (-5,-4.2)--(-2.56,-4.2) (-2.44,-4.2)--(2.94,-4.2) (3.06,-4.2)--(6,-4.2) (7,-5.6)--(7,-7) (6.36,-4.7)--(7,-5.6) (-6,-5.6)--(-6,-6.03) (-6,-6.58)--(-6,-7) (-5.36, -4.7)--(-6,-5.6);

\draw (0,0) node[below]{$e_1-e_2$};
\draw (-1,-1.4) node[below]{$-e_3+e_4$};
\draw (1,-1.4) node[below]{$e_3+e_5$};
\draw (0,-2.8) node[below]{$-e_1-e_2-e_8+e_9-e_{10}$};
\draw (0,-4.2) node[below]{$e_6-e_7$};
\draw (-5,-4.2) node[below]{$e_6+e_7-e_{12}$};
\draw (-2.5,-4.2) node[below]{$e_3+e_4-e_5-e_6$};
\draw (3,-4.2) node[below]{$-(3+s)$};
\draw (6,-4.2) node[below]{$-e_8+e_{10}+e_{11}$};
\draw (-6,-5.6) node[right]{$~e_{12}-e_{13}$};
\draw (7.1,-5.6) node[right]{$e_8+e_9$};
\draw (-6,-7) node[right]{$~e_{11+s}-e_{12+s}$};
\draw (6.9,-7) node[left]{$e_1+e_2-e_8-e_{10}$};
\draw [decorate,decoration={brace,amplitude=3pt},xshift=0pt]
	(-3.5,-5.5) -- (-3.5,-7.1) node [black,midway,xshift=8pt,yshift=0pt] 
	{$s$};
\end{tikzpicture}
\caption{An embedding of $Q_{X(2,1)}\oplus Q_{X(3,2)}\oplus Q_{X(5,1)}\oplus Q_{X(204s^2+732s+649,12(17s^2+44s+20))}$ into $ -\mathbb{Z}^{s+12}$.}
\label{fig:mysterious_family}
\end{figure}
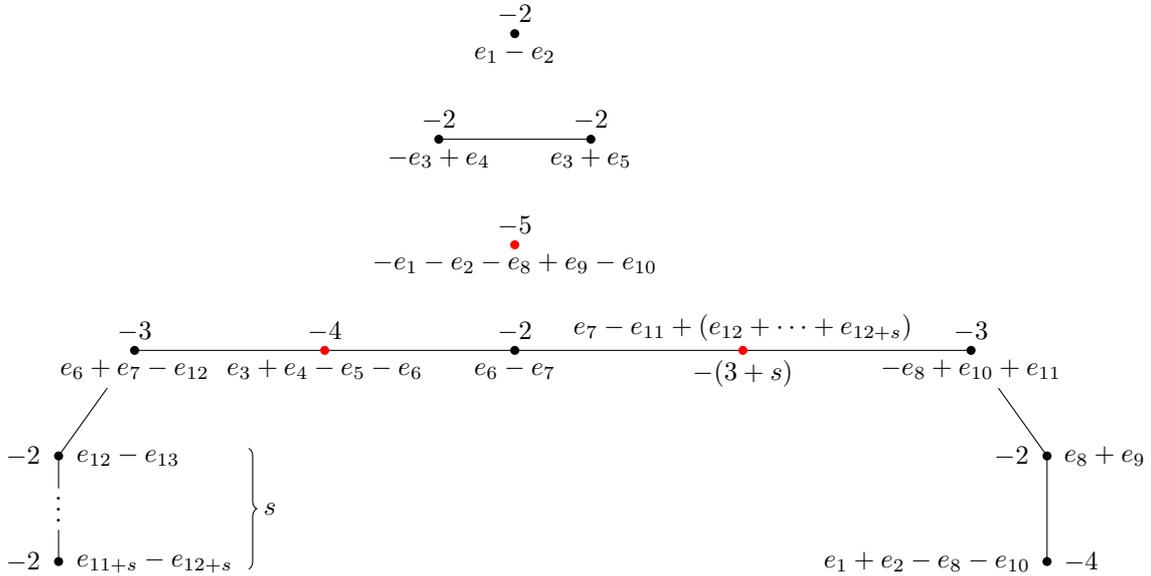 

(6) Finally, we check the spin $d$-invariant condition (Corollary \ref{cor:2.10}). Note that $60s^2+216s+192$ is the inverse of $12(17s^2+44s+20)$ modulo $204s^2+732s+649$. A straightforward application of the recursive formula (\ref{eq:d-invariant}) yields that the $d$-invariant of $L(204s^2+732s+649, 60s^2+216s+192)$ with the spin structure $132s^2+474s+420$ is zero for all $s$. Since $d(L(2,1),1)=\frac{1}{4}$, $d(L(3,2),2)=\frac{1}{2}$, and $d(L(5,1),0)=-1$, this implies that $L(2,1)\# L(3,2)\# L(5,1)\# L(204s^2+732s+649, 60s^2+216s+192)$ has a spin structure with $d=-\displaystyle\frac{1}{4}$.
\end{proof}

\bibliography{references}{}
\bibliographystyle{alpha}
\end{document}